\DeclareSymbolFont{cyrletters}{OT2}{wncyr}{m}{n}
\DeclareMathSymbol{\Sha}{\mathalpha}{cyrletters}{"58}
\theoremstyle{plain}
\newtheorem{thm}{Theorem}[section]
\newtheorem{lem}[thm]{Lemma}
\newtheorem{prop}[thm]{Proposition}
\newtheorem{cor}[thm]{Corollary}
\newtheorem{conj}[thm]{Conjecture}
\theoremstyle{definition}
\newtheorem{defn}[thm]{Definition}
\theoremstyle{remark}
\newtheorem{rem}[thm]{Remark}
\newtheorem{eg}[thm]{Example}
 \def\ppmod#1{\, {\rm (mod\, }{#1)}}
 \font\cyr=wncyr10
 \newcommand{\nc}{\newcommand}
\DeclareMathOperator{\Li}{{Li}}
\DeclareMathOperator{\dep}{{dp}}
\DeclareMathOperator{\tLi}{{\widetilde{Li}}}
\DeclareMathOperator{\MD}{\mathcal{MD}}
\DeclareMathOperator{\qMZ}{q\mathcal{M}\mathcal{Z}}
\DeclareMathOperator{\MZ}{\mathcal{MZ}}
\nc{\per}[1]{\underset{#1}{\boldsymbol \pi}\,}
\nc{\sha}{{\mbox{\cyr x}}}
 \nc{\baro}{\overset{\rm o}}
 \nc{\MT}{{\rm MT}}
 \nc{\XX}{{X}}
 \nc{\gF}{{\varPhi}}
 \nc{\ot}{\otimes}
 \nc{\wht}{\widehat}
 \nc{\bwg}{{\bigwedge}}
 \nc{\wg}{{\wedge}}
 \nc{\mmu}{{\boldsymbol{\mu}}}
 \nc{\mal}{{{\scriptstyle \maltese}}}
 \nc{\fA}{{\mathfrak A}}
 \nc{\HH}{{\mathfrak H}}
 \nc{\ra}{\rightarrow}
 \nc{\ors}{{\bfs}}
 \nc{\orr}{{\bfr}}
 \nc{\os}{{\overset}}
 \nc{\G}{{\mathbb G}}
 \nc{\F}{{\mathbb F}}
 \nc{\HHH}{{\mathbb H}}
 \nc{\Z}{{\mathbb Z}}
 \nc{\R}{{\mathbb R}}
 \nc{\N}{{\mathbb N}}
 \nc{\ZN}{{\mathbb Z_{\ge 0}}}
 \nc{\Q}{{\mathbb Q}}
 \nc{\C}{{\mathbb C}}
 \nc{\CP}{{\mathbb{CP}}}
 \nc{\Cnn}{{\mathbb C}_{\ge 0}}
 \nc{\Cp}{{\mathbb C}_{>0}}
 \nc{\MPV}{{\mathcal{MPV}}}
 \nc{\tB}{{\tilde B}}
 \nc{\tg}{{\tilde{g}}}
 \nc{\tn}{{\tilde{n}}}
 \nc{\tgz}{{\tilde{\zeta}}}
 \nc{\tPsi}{{\tilde{\Psi}}}
 \nc{\suf}{{\ast\,}}
 \nc{\sufq}{{\ast_q\,}}
 \nc{\gam}{{\gamma}}
 \nc{\gG}{{\Gamma}}
 \nc{\om}{{\omega}}
 \nc{\vep}{{\varepsilon}}
 \nc{\ga}{{\alpha}}
 \nc{\gl}{{\lambda}}
 \nc{\gb}{{\beta}}
 \nc{\gf}{{\varphi}}
 \nc{\gd}{{\delta}}
 \nc{\orgd}{{\vec \gd\,}}
 \nc{\gs}{{\sigma}}
 \nc{\gth}{{\theta}}
 \nc{\gS}{{\Sigma}}
 \nc{\gk}{{\kappa}}
  \nc{\gz}{{\zeta}}
 \nc{\gO}{{\Omega}}
 \nc{\sif}{{\mathcal S}}
 \nc{\gt}{{\tau}}
 \nc{\Lra}{\Longrightarrow}
 \nc{\lra}{\longrightarrow}
 \nc{\lmaps}{\longmapsto}
 \nc{\fS}{{\mathfrak S}}
 \nc{\DD}{{\mathfrak D}}
 \nc{\Llra}{\Longleftrightarrow}
 \nc{\ol}{\overline}
 \nc{\ola}{\overleftarrow}
 \nc{\lms}{\longmapsto}
 \nc{\cv}{{{\mathsf c}{\mathsf v}}}
 \nc{\zq}{{\zeta_q}}
 \nc\qup{{q\uparrow 1}}
 \nc{\us}{\underset}
 \nc{\gD}{{\Delta}}
 \nc{\bi}{{\bf i}}
 \nc{\bfone}{{\bf 1}}
 \nc{\bfa}{{\bf a}}
 \nc{\bfb}{{\bf b}}
 \nc{\bfc}{{\bf c}}
 \nc{\bfd}{{\bf d}}
 \nc{\bfe}{{\bf e}}
 \nc{\bff}{{\bf f}}
 \nc{\bfg}{{\bf g}}
 \nc{\bfi}{{\bf i}}
 \nc{\bfj}{{\bf j}}
 \nc{\bfn}{{\bf n}}
 \nc{\bfl}{{\bf l}}
 \nc{\bfk}{{\bf k}}
 \nc{\bfm}{{\bf m}}
 \nc{\bfo}{{\bf o}}
 \nc{\bfp}{{\bf p}}
 \nc{\bfq}{{\bf q}}
 \nc{\bfr}{{\bf r}}
 \nc{\bfs}{{\bf s}}
 \nc{\bfS}{{\bf S}}
 \nc{\bft}{{\bf t}}
 \nc{\bfu}{{\bf u}}
 \nc{\bfv}{{\bf v}}
 \nc{\bfw}{{\bf w}}
 \nc{\bfx}{{\bf x}}
 \nc{\bfy}{{\bf y}}
 \nc{\bfz}{{\bf z}}
 \nc{\bfB}{{\bf B}}
 \nc{\bfP}{{\bf P}}
 \nc{\bfQ}{{\bf Q}}
 \nc{\bfY}{{\bf Y}}
\nc{\bfgb}{{\bm \gb}}
 \nc{\bfga}{{\bm \ga}}
 \nc{\bfrho}{{\boldsymbol \rho}}
 \nc{\bfchi}{{\boldsymbol \chi}}
 \nc{\QX}{{\Q\langle \bfX\rangle}}
 \nc{\QY}{{\Q\langle \bfY\rangle}}
 \nc{\CX}{{\C\langle \bfX\rangle}}
 \nc{\CY}{{\C\langle \bfY\rangle}}
 \nc{\QXX}{{\Q\langle\!\langle \bfX\rangle\!\rangle}}
 \nc{\QYY}{{\Q\langle\!\langle \bfY\rangle\!\rangle}}
 \nc{\CXX}{{\C\langle\!\langle \bfX\rangle\!\rangle}}
 \nc{\CYY}{{\C\langle\!\langle \bfY\rangle\!\rangle}}
 \nc{\bbA}{{\mathbb A}}
 \nc{\bbB}{{\mathbb B}}
 \nc{\bbC}{{\mathbb C}}
 \nc{\bbD}{{\mathbb D}}
 \nc{\bbE}{{\mathbb E}}
 \nc{\bbF}{{\mathbb F}}
 \nc{\bbG}{{\mathbb G}}
 \nc{\bbH}{{\mathbb H}}
 \nc{\bbI}{{\mathbb I}}
 \nc{\bbJ}{{\mathbb J}}
 \nc{\bbK}{{\mathbb K}}
 \nc{\bbL}{{\mathbb L}}
 \nc{\bbM}{{\mathbb M}}
 \nc{\bbN}{{\mathbb N}}
 \nc{\bbO}{{\mathbb O}}
 \nc{\bbP}{{\mathbb P}}
 \nc{\bbQ}{{\mathbb Q}}
 \nc{\bbR}{{\mathbb R}}
 \nc{\bbS}{{\mathbb S}}
 \nc{\bbT}{{\mathbb T}}
 \nc{\bbU}{{\mathbb U}}
 \nc{\bbV}{{\mathbb V}}
 \nc{\bbW}{{\mathbb W}}
 \nc{\bbX}{{\mathbb X}}
 \nc{\bbY}{{\mathbb Y}}
 \nc{\bbZ}{{\mathbb Z}}
 \nc{\bba}{{\mathbb a}}
 \nc{\bbb}{{\mathbb b}}
 \nc{\bbc}{{\mathbb c}}
 \nc{\bbd}{{\mathbb d}}
 \nc{\bbe}{{\mathbb e}}
 \nc{\bbf}{{\mathbb f}}
 \nc{\bbg}{{\mathbb g}}
 \nc{\bbh}{{\mathbb h}}
 \nc{\bbi}{{\mathbb i}}
 \nc{\bbk}{{\mathbb k}}
 \nc{\bbl}{{\mathbb l}}
 \nc{\bbm}{{\mathbb m}}
 \nc{\bbn}{{\mathbb n}}
 \nc{\bbo}{{\mathbb o}}
 \nc{\bbp}{{\mathbb p}}
 \nc{\bbq}{{\mathbb q}}
 \nc{\bbr}{{\mathbb r}}
 \nc{\bbs}{{\mathbb s}}
 \nc{\bbt}{{\mathbb t}}
 \nc{\bbu}{{\mathbb u}}
 \nc{\bbv}{{\mathbb v}}
 \nc{\bbw}{{\mathbb w}}
 \nc{\bbx}{{\mathbb x}}
 \nc{\bby}{{\mathbb y}}
 \nc{\bbz}{{\mathbb z}}
 \nc{\MZV}{{\mathcal{MZV}}}
 \nc{\calA}{{\mathcal A}}
 \nc{\calB}{{\mathcal B}}
 \nc{\calC}{{\mathcal C}}
 \nc{\calD}{{\mathcal D}}
 \nc{\calE}{{\mathcal E}}
 \nc{\calF}{{\mathcal F}}
 \nc{\calG}{{\mathcal G}}
 \nc{\calH}{{\mathcal H}}
 \nc{\calI}{{\mathcal I}}
 \nc{\calJ}{{\mathcal J}}
 \nc{\calK}{{\mathcal K}}
 \nc{\calL}{{\mathcal L}}
 \nc{\calM}{{\mathcal M}}
 \nc{\calN}{{\mathcal N}}
 \nc{\calO}{{\mathcal O}}
 \nc{\calP}{{\mathcal P}}
 \nc{\calQ}{{\mathcal Q}}
 \nc{\calR}{{\mathcal R}}
 \nc{\calS}{{\mathcal S}}
 \nc{\calT}{{\mathcal T}}
 \nc{\calU}{{\mathcal U}}
 \nc{\calV}{{\mathcal V}}
 \nc{\calW}{{\mathcal W}}
 \nc{\calX}{{\mathcal X}}
 \nc{\calY}{{\mathcal Y}}
 \nc{\calZ}{{\mathcal Z}}
  \nc{\cala}{{\mathcal a}}
 \nc{\calb}{{\mathcal b}}
 \nc{\calc}{{\mathcal c}}
 \nc{\cald}{{\mathcal d}}
 \nc{\cale}{{\mathcal e}}
 \nc{\calf}{{\mathcal f}}
 \nc{\calg}{{\mathcal g}}
 \nc{\calh}{{\mathcal h}}
 \nc{\cali}{{\mathcal i}}
 \nc{\calj}{{\mathcal j}}
 \nc{\calk}{{\mathcal k}}
 \nc{\call}{{\mathcal l}}
 \nc{\calm}{{\mathcal m}}
 \nc{\caln}{{\mathcal n}}
 \nc{\calo}{{\mathcal o}}
 \nc{\calp}{{\mathsf p}}
 \nc{\calq}{{\mathcal q}}
 \nc{\calr}{{\mathcal r}}
 \nc{\cals}{{\mathcal s}}
 \nc{\calt}{{\mathcal t}}
 \nc{\calu}{{\mathcal u}}
 \nc{\calv}{{\mathcal v}}
 \nc{\calw}{{\mathcal w}}
 \nc{\calx}{{\mathcal x}}
 \nc{\caly}{{\mathcal y}}
 \nc{\calz}{{\mathcal z}}
 \nc{\frakA}{{\mathfrak A}}
 \nc{\frakB}{{\mathfrak B}}
 \nc{\frakC}{{\mathfrak C}}
 \nc{\frakD}{{\mathfrak D}}
 \nc{\frakE}{{\mathfrak E}}
 \nc{\frakF}{{\mathfrak F}}
 \nc{\frakG}{{\mathfrak G}}
 \nc{\frakH}{{\mathfrak H}}
 \nc{\frakI}{{\mathfrak I}}
 \nc{\frakJ}{{\mathfrak J}}
 \nc{\frakK}{{\mathfrak K}}
 \nc{\frakL}{{\mathfrak L}}
 \nc{\frakM}{{\mathfrak M}}
 \nc{\frakN}{{\mathfrak N}}
 \nc{\frakO}{{\mathfrak O}}
 \nc{\frakP}{{\mathfrak P}}
 \nc{\frakQ}{{\mathfrak Q}}
 \nc{\frakR}{{\mathfrak R}}
 \nc{\frakS}{{\mathfrak S}}
 \nc{\frakT}{{\mathfrak T}}
 \nc{\frakU}{{\mathfrak U}}
 \nc{\frakV}{{\mathfrak V}}
 \nc{\frakW}{{\mathfrak W}}
 \nc{\frakX}{{\mathfrak X}}
 \nc{\frakY}{{\mathfrak Y}}
 \nc{\frakZ}{{\mathfrak Z}}
 \nc{\fraka}{{\mathfrak a}}
 \nc{\frakb}{{\mathfrak b}}
 \nc{\frakc}{{\mathfrak c}}
 \nc{\frakd}{{\mathfrak d}}
 \nc{\frake}{{\mathfrak e}}
 \nc{\frakf}{{\mathfrak f}}
 \nc{\frakg}{{\mathfrak g}}
 \nc{\frakh}{{\mathfrak h}}
 \nc{\fraki}{{\mathfrak i}}
 \nc{\frakj}{{\mathfrak j}}
 \nc{\frakk}{{\mathfrak k}}
 \nc{\frakl}{{\mathfrak l}}
 \nc{\frakm}{{\mathfrak m}}
 \nc{\frakn}{{\mathfrak n}}
 \nc{\frako}{{\mathfrak o}}
 \nc{\frakp}{{\mathfrak p}}
 \nc{\frakq}{{\mathfrak q}}
 \nc{\frakr}{{\mathfrak r}}
 \nc{\fraks}{{\mathfrak s}}
 \nc{\frakt}{{\mathfrak t}}
 \nc{\fraku}{{\mathfrak u}}
 \nc{\frakv}{{\mathfrak v}}
 \nc{\frakw}{{\mathfrak w}}
 \nc{\frakx}{{\mathfrak x}}
 \nc{\fraky}{{\mathfrak y}}
 \nc{\frakz}{{\mathfrak z}}
 \nc{\so}{{\mathfrak so}}
 \nc{\sa}{{\mbox{{\scriptsize \cyr x}}}}
 \nc{\slfour}{{\mathfrak sl}_4}
 \nc{\one}{{\bf 1}}
 \nc{\zero}{{\bf 0}}
 \nc{\Qxy}{\Q\langle x,y\rangle}
 \nc{\iij}{{i}}
\nc{\bga}{{\boldsymbol \ga}}
\nc{\gL}{{\Lambda}}
 \nc{\DZ}{\mathcal{DZ}}
\newcommand{\dif}{ \operatorname{\mathfrak D}}
\newcommand{\gr}{ \operatorname{gr}}
\newcommand{\grw}{ \operatorname{gr}^{\operatorname{W};N}}
\newcommand{\grl}{ \operatorname{gr}^{\operatorname{L};N}}
\newcommand{\grwl}{ \operatorname{gr}^{\operatorname{W},\operatorname{L};N}}
\newcommand{\filw}{ \operatorname{Fil}^{\operatorname{W};N}}
\newcommand{\fille}{ \operatorname{Fil}^{\operatorname{L};N} }
\newcommand{\filwle}{ \operatorname{Fil}^{\operatorname{W},\operatorname{L};N}}
\newcommand{\thmref}[1] {Theorem \ref{#1}}
\newcommand{\lemref}[1] {Lemma \ref{#1}}
\newcommand{\corref}[1] {Corollary \ref{#1}}
\newcommand{\propref}[1] {Proposition \ref{#1}}
\newcommand{\qe}[1]{\frac{q^{#1}}{1-q^{#1}}}
\begin{document}
\title[Multiple divisor functions at level 2]{Multiple divisor functions and multiple zeta values at level $N$}

\author{Haiping Yuan}
\address{Department of Physical Sciences, York College of Pennsylvania, York, PA 17403}
\email{hyuan@ycp.edu}

\author{Jianqiang Zhao}
\address{Department of Mathematics,
Eckerd College, St. Petersburg, FL 33711, USA}
\email{zhaoj@ekcerd.edu}

\date{}

\begin{abstract}
Multiple zeta values (MZVs) are generalizations of Riemann zeta values at positive integers to 
multiple variable setting. These values can be further generalized to level $N$ multiple polylog 
values by evaluating multiple polylogs at $N$-th roots of unity. In this paper, we consider 
another level $N$ generalization by
restricting the indices in the iterated sums defining MZVs to congruences classes modulo $N$, which we call the  MZVs at level $N$. The goals of this paper are two-fold. First, we shall lay down the theoretical foundations of these values such as their regularizations and double shuffle relations. Second, we will generalize the multiple divisor functions (MDFs) defined by Bachman and K\"uhn to arbitrary level $N$ and study their relations to MZVs at level $N$. These functions are all $q$-series and similar to MZVs, they have both weight and depth filtrations. But unlike that of MZVs, the product of MDFs usually has mixed weights; however, after projecting to the highest weight we can obtain an algebra homomorphism from MDFs to MZVs. Moreover, the image of the derivation $\dif=q\frac{d}{dq}$ on MDFs vanishes on the MZV side, which gives rise to many nontrivial $\Q$-linear relations. In a sequel to this paper, we plan to investigate the nature of these relations.

\end{abstract}

\maketitle

\section{Introduction}
In the 1730s, long before Riemann, Euler started to study the infinite sums of the following form
$$\zeta(s)=1+\frac1{2^s}+\frac1{3^s}+\frac1{4^s}+\cdots$$
though he only considered $s$ as a real number. Later, in a series
of correspondences with Goldbach in 1740s he also investigated the infinite double sums
$$1+\frac1{2^s}\Big(1+\frac1{2^t}\Big)
+\frac1{3^s}\Big(1+\frac1{2^t}+\frac1{3^t}\Big)+\cdots.$$
In modern notation this is $\zeta(s,t)+\zeta(s+t)$
where $\zeta(m,n)$ is called a double zeta value. In the 1990s, Zagier \cite{Zagier1994}
and Hoffman \cite{Hoffman1992,Hoffman1997} independently extended these objects further
to the multiple zeta values (MZVs for short) which are defined as follows.
For $\bfs=(s_1,\dots,s_d)\in\N^d$
\begin{equation}\label{equ:MZFdefn}
\zeta(\bfs):=\sum_{ k_1>\dots>k_d>0} \frac{1}{k_1^{s_1}\cdots k_d^{s_d}}.
\end{equation}
Clearly this series converges if and only if $s_1\ge 2$. The number $d$ is
called the \emph{depth} (or \emph{length}), denoted by $\dep(\bfs)$, and
$s_1+\dots+s_d$ the \emph{weight}, denoted by $|\bfs|.$

MZVs have been found to play important roles in many areas of mathematics as well as
in physics such as
in the computation of certain Feynman integrals (see \cite{Broadhurst1999}).
Moreover, it is noticed that in fact one needs some generalizations of these numbers to
multiple polylog values at roots of unity (see for e.g., \cite{Broadhurst1996,Broadhurst1999}).
Recall that for any positive integers $n_1,\dots,n_d$,
the multiple polylog in complex variables $x_1,\dots,x_d$ is defined as follows:
\begin{equation}\label{equ:polylog}
Li_{n_1,\dots,n_d}(x_1,\dots,x_d)=\sum_{k_1>\dots>k_d>0}
\frac{x_1^{k_1}\cdots x_d^{k_d}}{ k_1^{n_1}\cdots k_d^{n_d}},
\end{equation}
where $|x_1\cdots x_j|<1 \text{ for }j=1,\dots,d.$
It can be analytically continued to a multi-valued meromorphic function
on $\C^d$ (see \cite{Zhao2007d}). Now fix an $N$-th root of unity
$\eta=\eta_N:=\exp(2\pi \sqrt{-1}/N)$. A multiple polylog value (MPV for short)
at \emph{level} $N$ has the form
\begin{equation}\label{equ:MPVdefn}
\calL_{N}(s_1,\dots,s_d;\ga_1,\dots,\ga_d):=
\Li_{s_1,\dots,s_d}(\eta^{\ga_1},\dots,\eta^{\ga_d})
\end{equation}
for some $s_1,\dots,s_d\in\N$ and $\ga_1,\dots,\ga_d\in\Z/N\Z$.

Extending the works of \cite{GKZ2006,KanekoTa2013,YuanZh2014}, we can provide
another generalization of MZV to level $N$ by restricting the summation indices
in \eqref{equ:MZFdefn} to certain fixed congruence classes modulo $N$.
Suppose $\bfs=(s_1,\dots,s_d)\in\N^d$ and $\bfga=(\ga_1,\dots,\ga_d)\in(\Z/N\Z)^d$.
The  MZVs at level $N$ with \emph{colors} $\bfga$ is
\begin{equation}\label{equ:MZVlevelN}
\zeta_N(\bfs;\bfga) = \sum_{\substack{k_1>\dots>k_d>0\\ k_j\equiv \ga_j\ppmod{N}\ \forall 1\le j\le d} }  \frac{1}{k_1^{s_1}\cdots k_d^{s_d}}.
\end{equation}
In section \ref{sec:relMPVMZV} we will investigate the relation between these values and MPVs defined by \eqref{equ:MPVdefn}. Then we will develop the general theory of regularizations of these values in section~\ref{sec:regularizationMZV}. This will enable us to study their underlying algebra structure.

In \cite{BachmannKu2013} Bachmann and K\"uhn considered the multiple divisor functions (MDFs)
and applied their results to the study of MZVs and multiple Eisenstein series. In this paper,
we generalize their theory to arbitrary levels and study the level $N=2$ case in some detail.
Note that the case $N=d=2$ were considered by Kaneko and Tasaka \cite{KanekoTa2013} and
Nakayama and Tasaka \cite{NakamuraTa2013}. We will see that the correct objects corresponding
to MZVs at arbitrary level $N$ are exactly the MZVs at level $N$.

As for MZVs (see, for e.g., \cite{Hoffman1997})
an extremely effective way to study MDFs is to consider their underlying algebra structure.
In section \ref{sec:MDFs}, by generalizing Bachmann and K\"uhn's work \cite{BachmannKu2013}
at level 1 we will
define the bifiltered algebra $\MD_N$ at arbitrary level $N$ by formalizing the stuffle
product of MDFs. Here, the main complication
results from the fact that these stuffle products often produce MDFs of
lower weights whose coefficients, unlike $N=1$ case, are generally complex numbers lying in
the cyclotomic field $\Q(\eta_N)$. This forces us to study $\MD_N$ as a bifiltered $\Q(\eta_N)$-algebra.
However, all is not lost since when we restrict to the weight-graded pieces we can
safely descend to a $\Q$-algebra (see Thereom~\ref{thm:Zk}).

In some sense, the MDFs are easier to handle than the MZVs at level $N$
because there is a derivation $\dif$ on $\MD_N$ which, when passing to the MZVs' side provides
many double shuffle relations which are supposed to be difficult to find since one would need
two ways to regularize the MZVs at level $N$ first. We will define and compute
$\dif$ in section~\ref{sec:derivation}. We know that for MZVs at higher levels,
double shuffle relations are not sufficient to provide all their $\Q$-linear relations.
In fact, we proved in \cite{Zhao2010b} the distribution relations and the weight one relations
are needed to produce the so-called standard relations and, furthermore, non-standard relations do exist.
To us, the most interesting question is whether one can produce other relations besides
the double shuffle type using the derivation $\dif$. If not, are there any other relations among
MDFs corresponding to the other standard and non-standard relations among MZVs at level $N$.

\medskip
\noindent{\bf Acknowledgement.} This work was started
when both authors were visiting the Morningside Center of Mathematics in Beijing in 2013.
JZ also would like to thank the Max-Planck Institute for Mathematics
and the Kavli Institute for Theoretical Physics China for their hospitality where part of
this work was done. HY is partially supported by the 2013 summer research grant from
York College of Pennsylvania. JZ is partially supported by NSF grant DMS1162116.

\section{Relations between MPVs and MZVs at level  $N$}\label{sec:relMPVMZV}
By definition \eqref{equ:MPVdefn} when $N=1$ MPVs become MZVs.
One of the central themes in the study of MPVs is to determine the $\Q$-linear
relations between them. By using motivic mechanism and higher algebraic $K$-theory
Deligne and Gonccharov \cite{DeligneGo2005} obtained the following result.
\begin{thm}
Let $\MPV_\Q(w,N)$ be the $\Q$-vector space spanned by all the MPVs of weight $w$ and level $N$.
Then we have $d(w,N):=\dim_\Q \MPV_\Q(w,N)\le DG(w,N)$
where $1+\sum_{w=1}^\infty DG(w,N)t^w$ is the
formal power series
\begin{equation*}
\left\{
    \begin{array}{ll}
     \displaystyle{ \frac1{1-t^2-t^3},} & \hbox{if $N=1$;} \\
      \displaystyle{ \frac1{1-t-t^2}, } & \hbox{if $N=2$;} \\
      \displaystyle{ \frac1{1-a(N)t+ b(N)t^2}, } & \hbox{if $N\ge 3$,}
    \end{array}
  \right.
\end{equation*}
where $a(N)=\frac{\varphi(N)}2+\nu(N)$, $b(N)=\nu(N)-1$, $\varphi$
is the Euler's totient function and $\nu(N)$ is the number of distinct
prime divisors of $N$.
\end{thm}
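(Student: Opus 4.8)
The plan is to follow the motivic strategy of Deligne and Goncharov: exhibit every level-$N$ MPV as a period of the pro-unipotent motivic fundamental group of $X_N:=\G_m\setminus\mu_N$ relative to tangential base points, bound the resulting period space by an invariant of a Tannakian category of mixed Tate motives, and read off that invariant from Borel's theorem on the ranks of $K$-groups of number rings.

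I would begin from the iterated-integral representation $\Li_{s_1,\dots,s_d}(\eta^{\ga_1},\dots,\eta^{\ga_d})=\int_\gamma\omega$, where $\gamma$ runs from $0$ to $1$ and $\omega$ is a word of length $w=|\bfs|$ in the forms $dt/t$ and $dt/(t-\zeta)$ with $\zeta\in\mu_N$, convergence forcing the first letter to be $dt/t$ and the last to differ from $dt/(t-1)$. Thus an MPV of weight $w$ at level $N$ is a period of the unipotent fundamental groupoid of $X_N$ with base points $\vec 1_0,-\vec 1_1$. By the work of Deligne--Goncharov---resting on the Voevodsky--Levine triangulated category of motives and on Beilinson--Soul\'e vanishing over $\calO_N:=\Z[\eta_N,1/N]$, known here by Borel---this groupoid is pro-represented in the category $\MT(\calO_N)$ of mixed Tate motives unramified outside $N$, and the de Rham--Betti comparison exhibits each such integral as an entry of a canonical period matrix. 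Grading everything by MPV-weight, the weight-$w$ entries span $\MPV_\Q(w,N)$, so $d(w,N)\le\dim\bigl(\calA_N\ot P_N\bigr)_w$, where $\calA_N=\calO(U_N)$ is the Hopf algebra of the pro-unipotent radical $U_N$ of the motivic Galois group $\G_m\ltimes U_N$ and $P_N$ is the polynomial algebra on the pure-Tate periods needed to trivialize the tangential base points.

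Since $\MT(\calO_N)$ is mixed Tate with $\operatorname{Ext}^2(\Q(0),\Q(n))=0$ for all $n$ (Borel again, via vanishing of the relevant $K$-groups), $\Lie(U_N)$ is a \emph{free} graded pro-nilpotent Lie algebra whose degree-$(-n)$ generators have dimension $a_n:=\dim_\Q\operatorname{Ext}^1_{\MT(\calO_N)}(\Q(0),\Q(n))$; Poincar\'e--Birkhoff--Witt gives $\sum_{w\ge0}(\dim\calA_{N,w})t^w=(1-\sum_{n\ge1}a_nt^n)^{-1}$, and $P_N$ contributes a factor $(1-t^2)^{-1}$ for $N\in\{1,2\}$ (from $\zeta(2)$) and $(1-t)^{-1}$ for $N\ge3$ (from $2\pi\sqrt{-1}$, itself a weight-one $\Q$-combination of the $\Li_1(\eta^a)$ when $N\ge3$). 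It remains to compute $a_n$: by Beilinson, $\operatorname{Ext}^1_{\MT(\calO_N)}(\Q(0),\Q(n))\cong K_{2n-1}(\calO_N)\ot\Q$ (with $\operatorname{Ext}^1(\Q(0),\Q(1))=\calO_N^\times\ot\Q$), and by Borel, for $F=\Q(\eta_N)$ with $r_1$ real and $r_2$ complex places $\dim K_{2n-1}(F)\ot\Q$ equals $r_1+r_2$ for odd $n\ge3$ and $r_2$ for even $n\ge2$, while inverting $N$ changes nothing for $n\ge2$ (localization sequence) and $\rank\calO_N^\times=r_1+r_2-1+\#\{\mathfrak p\mid N\}$ by the Dirichlet $S$-unit theorem. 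Inserting $r_1=1,\,r_2=0$ for $N\in\{1,2\}$ and $r_1=0,\,r_2=\varphi(N)/2$ for $N\ge3$, summing the (geometric-type) series and simplifying $P_N(t)\,(1-\sum a_nt^n)^{-1}$ produces the three stated closed forms, $\nu(N)$ entering through the prime-counting term in $a_1$.

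The conceptually hardest part is the geometric realization: actually constructing the unipotent motivic fundamental group of $X_N$ with tangential base points as a (pro-)object of $\MT(\calO_N)$, and proving its Hodge/de Rham periods span exactly the $\Q$-vector space of MPVs. This needs the full apparatus of mixed motives, the Beilinson--Soul\'e vanishing (available precisely because $\calO_N$ is a ring of $S$-integers), and a careful matching of the MPV-weight grading with the motivic weight filtration; the vanishing $\operatorname{Ext}^2=0$ and Borel's rank computation are invoked as black boxes. The residual difficulty is bookkeeping: pinning down the factor $P_N$ and the low-degree values $a_1,a_2$ so that the simplification lands exactly on $a(N)=\varphi(N)/2+\nu(N)$ and $b(N)=\nu(N)-1$.
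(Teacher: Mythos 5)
The paper does not prove this theorem: it is quoted verbatim from Deligne--Goncharov \cite{DeligneGo2005}, so the only fair comparison is with that cited argument, and your outline is indeed a high-level summary of it (iterated-integral representation on $\G_m\setminus\mu_N$, realization of the unipotent fundamental groupoid in mixed Tate motives over $\Z[\mu_N,1/N]$, freeness of the prounipotent radical from $\operatorname{Ext}^2=0$, PBW, and Borel's ranks). As a proof it is of course only a skeleton --- every substantive step is invoked as a black box and you say so --- but that is appropriate for a statement the paper itself treats as an external input.

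There is, however, one concrete error in the part you do carry out, and it is exactly where $\nu(N)$ is supposed to appear. You compute $a_1$ as the rank of the full $S$-unit group of $\calO_N=\Z[\eta_N,1/N]$, namely $r_1+r_2-1+\#\{\mathfrak p\mid N\}$. But $\#\{\mathfrak p\mid N\}$ counts primes of $\Q(\eta_N)$ lying over $N$, not rational primes dividing $N$, and these differ as soon as some $p\mid N$ splits in the prime-to-$p$ part of $\Q(\mu_N)$ (e.g.\ for $N=55$ there are $2$ primes over $5$ and $4$ over $11$, so $\#\{\mathfrak p\mid N\}=6$ while $\nu(55)=2$). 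Feeding that $a_1$ into your generating-series computation yields a strictly weaker bound than the stated one whenever such splitting occurs. To land on $a(N)=\varphi(N)/2+\nu(N)$ and $b(N)=\nu(N)-1$ one must not use all of $\operatorname{Ext}^1(\Q(0),\Q(1))$ for $\MT(\calO_N)$, but only the part actually hit by the fundamental groupoid: the weight-one periods are spanned by $2\pi\sqrt{-1}$ and the $\log(1-\eta^a)$, and the subgroup of $\calO_N^\times\otimes\Q$ generated by the cyclotomic numbers $1-\eta^a$ has rank $\varphi(N)/2+\nu(N)-1$ (Bass's theorem). This restriction to the Tannakian subcategory generated by $\pi_1^{\mathrm{mot}}(\G_m\setminus\mu_N)$ --- equivalently, the sharper weight-one count --- is an essential ingredient of the Deligne--Goncharov bound, not mere bookkeeping, and is missing from your argument.
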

For MZVs (i.e. $N=1$) it is conjectured by Zagier \cite{Zagier1994} that the upper bound
$DG(w,1)$ is sharp and it is widely believed that all the
$\Q$-linear relations are consequences of the so-called double shuffle relations.
At higher levels, however, one needs some extra relations (see \cite{Racinet2002,Zhao2010b}).
In particular, at level $N=2$ some comprehensive computation has been carried out
in \cite{BBV2010}, and at level $N=4$ one needs certain non-standard relations (see \cite{Zhao2008d}).

Another direction of generalizations of MZVs was presented in \cite{YuanZh2014} which was motivated
by \cite{GKZ2006,KanekoTa2013}.
There we defined the \emph{double} zeta values at level $N$ by restricting the summation indices
to congruence classes modulo $N$. However, it is straight-forward to define MZVs at level $N$
of \emph{multiple} integer variables as in \eqref{equ:MZVlevelN}. Here, again, one of the key
problems is to determine all the $\Q$-linear relations among MZVs of fixed weight and level.
So we denote by $\MZV_\Q(w,N)$ the $\Q$-vector space spanned by all the MZVs of weight
$w$ and level $N$. It is not hard too see that when $w=1$ all MZVs at level $N$ diverge.
To remedy the problem we will consider their regularized values in section~\ref{sec:regularizationMZV}.
Notice that Racinet described two ways of regularization of MPVs in \cite{Racinet2002},
one by using series representation (we say such regularized values $\calL^*$ are $*$-regularized)
and the other by integral representation
(we say such regularized values  $\calL^\sha$ are $\sha$-regularized).
Let $\MPV^*_\Q(w,N)$ be the $\Q$-vector space generated by
all $*$-regularized MPVs, and similarly define $\MPV^\sha_\Q(w,N)$,
$\MZV^*_\Q(w,N)$ and $\MZV^\sha_\Q(w,N)$ (see section~\ref{sec:regularizationMZV}).

\begin{conj}\label{conj:MZV=MPVLevelN}
For all weight $w$ and level $N$ we have
$$\dim_Q \MZV^*_\Q(w,N)=\dim_\Q \MZV^\sha_\Q(w,N)=\dim_\Q \MPV^*_\Q(w,N)=\dim_\Q \MPV^\sha_\Q(w,N).$$
\end{conj}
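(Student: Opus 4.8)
The plan is to separate the four-fold equality into two independent comparisons --- a ``series versus integral'' comparison ($*$ versus $\sha$) internal to each of the two families, and an ``$\MZV$ versus $\MPV$'' comparison linking them --- and then to isolate the one genuinely arithmetic point. For the first type, on the $\MPV$ side I would invoke Racinet's regularization theorem \cite{Racinet2002}: at every level $N$ there is an explicit weight-graded $\Q$-linear comparison map $\rho_N$ between the $*$- and the $\sha$-regularized level-$N$ polynomial algebras whose correction terms are $\Q$-linear combinations of products of strictly lower weight MPVs of level $N$. Since the stuffle product of $*$-regularized level-$N$ MPVs is weight-homogeneous with rational coefficients, $\bigoplus_w\MPV^*_\Q(w,N)$ is a graded $\Q$-algebra, so specializing the regularization parameter to $0$ in $\rho_N$ yields $\MPV^\sha_\Q(w,N)\subseteq\MPV^*_\Q(w,N)$ and $\rho_N^{-1}$ yields the reverse inclusion. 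The same argument should carry over to the MZVs at level $N$ via the regularization theory of Section~\ref{sec:regularizationMZV}, since there too the stuffle product of $*$-regularized $\zeta_N$'s is weight-homogeneous with rational coefficients (so $\bigoplus_w\MZV^*_\Q(w,N)$ is a graded $\Q$-algebra) and the $*/\sha$ comparison map is weight-graded with corrections lying in $\MZV_\Q(\bullet,N)$. This would give $\dim_\Q\MPV^*_\Q(w,N)=\dim_\Q\MPV^\sha_\Q(w,N)$ and $\dim_\Q\MZV^*_\Q(w,N)=\dim_\Q\MZV^\sha_\Q(w,N)$, reducing the conjecture to the single equality $\dim_\Q\MZV^*_\Q(w,N)=\dim_\Q\MPV^*_\Q(w,N)$.

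Next I would bring in the discrete Fourier transform of Section~\ref{sec:relMPVMZV}. Detecting $k\equiv\ga\ppmod{N}$ by character orthogonality and interchanging summations (valid for the convergent values, and true by construction for the regularized ones) gives, with $\langle\bfgb,\bfga\rangle:=\sum_j\beta_j\ga_j$ and $d=\dep(\bfs)$,
\[
\zeta_N(\bfs;\bfga)=\frac{1}{N^{d}}\sum_{\bfgb\in(\Z/N\Z)^{d}}\eta^{-\langle\bfgb,\bfga\rangle}\,\calL_N(\bfs;\bfgb),\qquad
\calL_N(\bfs;\bfgb)=\sum_{\bfga\in(\Z/N\Z)^{d}}\eta^{\langle\bfgb,\bfga\rangle}\,\zeta_N(\bfs;\bfga).
\]
I would then check that these two transforms intertwine the $*$- and the $\sha$-regularizations on the two sides: on the $\MPV$ side divergence occurs only for words with trivial leading color, and under the transform these account, with explicit Fourier coefficients, for the divergences of the $\zeta_N$'s with $s_1=1$, so the regularization parameters correspond. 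It follows that the $\Q(\eta_N)$-linear spans of $\MZV^*_\Q(w,N)$ and of $\MPV^*_\Q(w,N)$ inside $\C$ coincide --- a common space which, by the Deligne--Goncharov bound recalled above, has $\Q(\eta_N)$-dimension at most $DG(w,N)$.

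The main obstacle is the descent from $\Q(\eta_N)$ back to $\Q$: the Fourier transform only identifies the two spans after base change to $\Q(\eta_N)$, and because these are spans of generically transcendental complex numbers one cannot conclude that $\dim_\Q V$ equals $\dim_{\Q(\eta_N)}\bigl(\Q(\eta_N)\cdot V\bigr)$ for $V=\MZV^*_\Q(w,N)$ or $V=\MPV^*_\Q(w,N)$ --- the natural map $V\otimes_\Q\Q(\eta_N)\to\C$ could a priori have nonzero kernel, and could do so for one family but not the other. The conjecture is exactly the statement that no such collapse occurs, i.e. that every $\Q$-linear relation among these values is already visible over $\Q(\eta_N)$. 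To attack this I would lift the whole picture to the level-$N$ motivic Hopf algebra of Deligne--Goncharov \cite{DeligneGo2005}, in which $\eta_N$ is an honest (Artin) period, define motivic MZVs at level $N$ by the first Fourier formula above, and establish the motivic identity $\MZV^{\mathrm{mot}}(w,N)\otimes_\Q\Q(\eta_N)=\MPV^{\mathrm{mot}}(w,N)\otimes_\Q\Q(\eta_N)$ --- now a genuine equality of $\Q(\eta_N)$-vector spaces, whence $\dim_\Q\MZV^{\mathrm{mot}}(w,N)=\dim_\Q\MPV^{\mathrm{mot}}(w,N)$ by comparing $\Q(\eta_N)$-dimensions of one and the same space. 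The conjecture would then follow from the period conjecture for these values, i.e. from $\dim_\Q\MZV^\bullet_\Q(w,N)=\dim_\Q\MZV^{\mathrm{mot}}(w,N)$ and its $\MPV$ analogue; this is wide open and already contains Zagier's conjecture at $N=1$. So I expect only the motivic/upper-bound half to be within reach, with the full equality confirmable numerically in low weight by computing the double shuffle, distribution and weight-one relations on both sides (cf.\ \cite{Zhao2010b}); the genuinely hard, transcendence-theoretic core is precisely the ``no collapse'' statement.
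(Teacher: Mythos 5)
The statement you were asked to prove is labelled a \emph{conjecture} in the paper, and the paper offers no proof of it; so there is no ``paper's own proof'' to match. What the paper does prove is precisely the fragment of your argument that is actually within reach: Theorem~\ref{thm:MZVMPVlevelN}, which establishes the equality of dimensions \emph{after extending scalars to} $\Q(\eta_N)$, and it does so by exactly the discrete Fourier transform you write down, inverting it via the Vandermonde determinant computation $\det(V)^2=\pm N^N$. Your second displayed formula is the paper's \eqref{equ:MZV=MPVLevelN}, and the extension to regularized values is its \eqref{equ:MZV=MPVLevelNReg}. You have also correctly identified the genuinely hard point: the descent from $\Q(\eta_N)$ to $\Q$, i.e.\ the assertion that the map $V\otimes_\Q\Q(\eta_N)\to\C$ is injective for these spans. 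The authors say as much in the paragraph preceding Theorem~\ref{thm:MZVMPVlevelN} (``every linear relation over $\overline{\Q}$ must be a consequence of some linear relation over $\Q$ \ldots\ This is the motivation for our conjecture''). So your analysis is faithful to the paper's intent, but to be clear: it is a reduction of the conjecture to other open statements (the rationality descent, or equivalently the period conjecture for the motivic lift), not a proof, and you are right to say so explicitly at the end.

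Two smaller cautions. First, your opening reduction $\dim_\Q\MPV^*_\Q(w,N)=\dim_\Q\MPV^\sha_\Q(w,N)$ (and its $\MZV$ analogue) is stated as if it were routine, but it needs an argument: the comparison map $\rho$ of Section~\ref{sec:regularizationMZV} produces correction terms of the form $\zeta(n)$ times lower-weight regularized values, and you must use the stuffle (resp.\ shuffle) algebra structure to rewrite these products as $\Q$-linear combinations of regularized values of the \emph{same} total weight $w$ before you can conclude equality of the weight-$w$ spans; the paper itself only establishes the comparison identity $\rho(Li^\ast)=I^{\Sha}$ and does not draw the dimension consequence. Second, the claim that the Fourier transform ``intertwines'' the two regularizations on the two sides is exactly the content of \eqref{equ:MZV=MPVLevelNReg}, which the paper states for $s_1=1$ and leaves for the reader to check against Racinet's normalization; it is true but not free. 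Neither point is fatal, since the conjecture remains open regardless, but if you intend the reduction steps to be the rigorous part of your write-up they need these details filled in.
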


Notice that at level $N=2$ the MPVs become the so called alternating Euler sums.
As usual, instead of writing $\calL_2(s_1,\dots,s_d;a_1,\dots,a_d)$ we will write
$\zeta(s_1,\dots,s_d)$ and then add bars over $s_j$'s for all $a_j=1$. For example,
the alternating series $\calL_2(s,t;1,0)$ is denoted by $\zeta(\bar{s},t)$. The
following beautiful identity involving alternating Euler sums was first proved
in \cite{BorweinBr2006} for $n=1$ and in \cite{Zhao2010a} for arbitrary $n$:
\begin{equation*}
\zeta(\{3\}^n)=8^n\zeta(\{\ol2,1\}^n).
\end{equation*}

\section{Regularizations of MZVs at level $N$}\label{sec:regularizationMZV}
It turns out that it is beneficial to us to have divergent MZVs at level $N$ at our disposal.
Similar to MZVs treated in \cite{IKZ2006}, or more generally, MPVs at level $N$ in \cite{Racinet2002},
one can use two ways to regularize divergent MZVs at level $N$.
When $N=d=2$ this was done in \cite{YuanZh2014}. In this section, we deal with the general case.

\begin{defn}\label{defn:gG}
We define $\gG_0=0$ and for all $1\le \gb<N$
\begin{equation*}
     \gG_\gb=\gG_\gb(\emptyset;\emptyset)=-\frac{1}{\gb}+
     \sum_{n=1}^\infty \left(\frac{1}{nN}-\frac{1}{\gb+nN} \right).
\end{equation*}
Let $d>0$, $\bfs=(s_1,\dots,s_d)\in\N^d$ and $\bfga=(\ga_1,\dots,\ga_d)\in(\Z/N\Z)^d$.
Define $\gG_0(\bfs;\bfga):=0$. Set $\ga_0=0$ and for all $1\le \gb<N$ define
\begin{align}
\gG_\gb(\bfs;\bfga)
:=&\,
\left\{
  \begin{array}{ll}
    {\displaystyle \sum_{\substack{n_0>n_1 > \dots > n_d > 0\\ n_j\equiv \ga_j\ppmod{N}\ \forall j} }    \left(\frac{1}{n_0}-\frac{1}{\gb+n_0} \right)
\frac{1}{n_1^{s_1}\cdots n_d^{s_d}} }, & \hbox{$\displaystyle {{\text{if } \gb<\ga_1} \atop{\text{or } d=0;}}$} \\
    {\displaystyle\sum_{\substack{n_0>n_1 > \dots > n_d > 0\\ n_j\equiv \ga_j\ppmod{N}\ \forall j} }   \left(\frac{1}{n_0}-\frac{1}{\gb-N+n_0} \right)
\frac{1}{n_1^{s_1}\cdots n_d^{s_d}} }, & \hbox{otherwise.}
  \end{array}
\right.
\label{equ:gG_gb}
\end{align}
Straightforward computation shows that all these values are finite.
Let $\MZV_N$ be the $\Q$-vector space generated by all convergent MZVs of level $N$ together
with all the values $\gG_\gb(\bfs;\bfga)$ including $(\bfs;\bfga)=(\emptyset;\emptyset)$.
\end{defn}

\begin{defn}\label{defn:regularize}
We define the $*$-regularized version of $\zeta_N(1;0)$ as a polynomial in $T$ of degree 1:
\begin{equation*}
\zeta^\ast_N(1;0):=\frac1N \left(T+\sum_{\gb=0}^{N-1}\gG_\gb \right)\in \MZV_N[T]
\end{equation*}
and for all $1\le \gb<N$ we define
\begin{equation*}
\zeta^\ast_N(1;\gb):=\zeta^\ast_N(1;0)-\gG_\gb \in \MZV_N[T].
\end{equation*}
If $s_1>1$ then we define the $*$-regularized version of $\zeta_N(1,\bfs;\gb,\bfga)$
as a polynomial in $T$ of degree 1 by using the stuffle relations:
\begin{multline*}
\zeta^\ast_N(1,\bfs;\gb,\bfga):=\zeta^\ast_N(1;\gb)\zeta_N(\bfs;\bfga)
-\sum_{j=1}^d \gd_{\ga_j,\gb} \zeta_N(s_1,\dots,s_j+1,s_{j+1},\dots,s_d;\bfga)\\
-\sum_{j=1}^d \zeta_N(s_1,\dots,s_j,1,s_{j+1},\dots,s_d;\ga_1,\dots,\ga_j,\gb,\ga_{j+1},\dots,\ga_d)\in \MZV_N[T].
\end{multline*}
Now suppose $m\ge 1$, $d\ge 0$ and suppose $\zeta^\ast_N(\{1\}^m,\bfs;\bfgb,\bfga)$ have been defined
as a polynomial in $\MZV_N[T]$ of degree at most $m$
(in fact exactly $m$, but we leave this to the interested reader to check)
for all $\bfgb'=(\gb_1,\dots,\gb_m)\in(\Z/N\Z)^m$,
$\bfga\in(\Z/N\Z)^d$ and $\bfs=(s_1,\dots,s_d)\in\N^d$ with $s_1>1$.
In the following we use the stuffle relations to define $\zeta^\ast_N(\{1\}^{m+1},\bfs;\bfgb,\bfga)$
for all $\bfga=(\gb_0,\dots,\gb_m)\in(\Z/N\Z)^{m+1}$ by some linear algebra.

First we write down the equation
\begin{equation*}
    T\cdot \zeta^\ast_N(\{1\}^m,\bfs;\bfgb',\bfga)
     =\sum_{\gam=0}^{N-1} \zeta^\ast_N(1;\gam)\zeta^\ast_N(\{1\}^m,\bfs;\bfgb',\bfga).
\end{equation*}
Expanding the right hand side using stuffle relations we see that
\begin{equation*}
\sum_{\ell=0}^{m}\sum_{\gam=0}^{N-1} \zeta^\ast_N(\{1\}^{m+1},\bfs;\gb_1,\dots,\gb_\ell,\gam,\gb_{\ell+1},\dots,\gb_m,\bfga)
\end{equation*}
is in $\MZV_N[T]$  of degree at most $m+1$.
On the other hand we write down the equations
\begin{equation*}
\sum_{\gam=0}^{N-1} \gG_\gam(\{1\}^m,\bfs;\bfgb',\bfga) =N\zeta^\ast_N(\{1\}^{m+1},\bfs;0,\bfgb',\bfga)
-\sum_{\gam=0}^{N-1} \zeta^\ast_N(\{1\}^{m+1},\bfs;\gam,\bfgb',\bfga).
\end{equation*}
Adding up the above two equations we get the following element in $\MD_N[T]$
\begin{equation*}
N\zeta^\ast_N(\{1\}^{m+1},\bfs;0,\bfgb',\bfga)
+\sum_{\ell=1}^{m}\sum_{\gam=0}^{N-1} \zeta^\ast_N(\{1\}^{m+1},\bfs;
    \gb_1,\dots,\gb_\ell,\gam,\gb_{\ell+1},\dots,\gb_m,\bfga).
\end{equation*}
Subtracting $N \gG_{\gb_0}(\{1\}^m,\bfs;\bfgb',\bfga)$ we see that for all
$\bfgb=(\gb_0,\dots,\gb_m)\in(\Z/N\Z)^{m+1}$
\begin{equation*}
N\zeta^\ast_N(\{1\}^{m+1},\bfs;\bfgb,\bfga)
+\sum_{\ell=1}^{m}\sum_{\gam=0}^{N-1} \zeta^\ast_N(\{1\}^{m+1},\bfs;\gb_1,\dots,\gb_\ell,\gam,\gb_{\ell+1},\dots,\gb_m,\bfga)
=b(\bfgb),
\end{equation*}
for some $b(\bfgb)\in \MD_N[T]$ of degree at most $m+1$.
Regarding $\zeta^\ast_N(\{1\}^{m+1},\bfs;\bfgb,\bfga)$ as $N^{m+1}$ variables as $\bfgb$ varies
we now only need to show the corresponding $N^{m+1}\times N^{m+1}$ coefficient
matrix $M(N,m)$ is nonsingular. Moreover, the inverse of $M(N,m)$ has entries in $\Q$.
Thus $M(N,m)^{-1} [b(\bfgb)]_{\bfgb\in \in(\Z/N\Z)^{m+1}}$ provides the definition
of $\zeta^\ast_N(\{1\}^{m+1},\bfs;\bfgb,\bfga)$ for all $\bfgb\in(\Z/N\Z)^{m+1}$ at the same time.

Using the $N$-adic system to order the variables
by assigning the position number $\sum_{r=0}^{m-1} \gb_r N^r$ to
$\zeta^\ast_N(\{1\}^{m+1},\bfs;\gb_m,\dots,\gb_0,\bfga)$ we see that
\begin{equation*}
M(N,m)=N I_{N^{m+1}}+\sum_{r=0}^{m-1} \Big(E^{(m,r)}_{i,j} \Big)_{0\le i,j< N^{m+1} },
\end{equation*}
where $I_{N^{m+1}}$ is the identity matrix of size $N^{m+1}$ and
$E^{(m,r)}$ is an $N^{m+1}\times N^{m+1}$ matrix whose $(i,j)$-th entry is given by
\begin{equation*}
E^{(m,r)}_{i,j}=
\left\{
  \begin{array}{ll}
    1, & \hbox{if $N^r|(j-\bar{\imath} N)$ and  $0\le j-\bar{\imath} N<N^r(N-1)$;} \\
    0, & \hbox{otherwise,}
  \end{array}
\right.
\end{equation*}
where $0\le \bar{\imath}<N^m$ such that $\bar{\imath}\equiv i \pmod{N^m}$.
We now prove that
\begin{equation}\label{equ:MatrixMNm}
 \det M(N,m)=N^{N^{m+1}} \cdot (m+1)\cdot \prod_{j=2}^m  j^{N^{m-j}(N-1)}.
\end{equation}
Partition the matrix $M(N,m)$ into $N$ rows and $N$ columns of smaller square blocks of size $N^m\times N^m$.
Subtracting the first (0th, strictly speaking) row of these blocks from all the other rows we
get the matrix of the form
\begin{equation*}
\begin{pmatrix}
* & * & * & \cdots & *  \\
-N I_{N^m} & N I_{N^m} & 0 & \cdots & 0    \\
-N I_{N^m} & 0 & N I_{N^m} & \cdots & 0   \\
\vdots &\vdots &\ddots &\vdots &\vdots \\
-N I_{N^m} & 0 & 0 & \cdots & N I_{N^m}
\end{pmatrix}.
\end{equation*}
Now adding all the columns of these blocks to the first we get
\begin{equation*}
\begin{pmatrix}
M_1 & * & * & \cdots & *   \\
0 & N I_{N^m} & 0 & \cdots & 0    \\
0 & 0 & N I_{N^m} & \cdots & 0   \\
\vdots &\vdots &\ddots &\vdots &\vdots \\
0 & 0 & 0 & \cdots & N I_{N^m}
\end{pmatrix},
\end{equation*}
where $M_1$ is $N^{m}\times N^{m}$ matrix with the form
$$M_1= N I_{N^m}+\sum_{r=0}^{m-1} \Big(E^{(m-1,r)}_{i,j} \Big)_{0\le i,j< N^m }.$$
So  $\det M(N,m)=N^{N^m(N-1)}\det M_1.$
Applying the same trick on $M_1$ we can transform it to the $N^m\times N^m$ matrix of the form
\begin{equation*}
\begin{pmatrix}
M_2 & * & * & \cdots & *   \\
0 & N I_{N^{m-1}} & 0 & \cdots & 0    \\
0 & 0 & N I_{N^{m-1}} & \cdots & 0   \\
\vdots &\vdots &\ddots &\vdots &\vdots \\
0 & 0 & 0 & \cdots & N I_{N^{m-1}}
\end{pmatrix}.
\end{equation*}
So  $\det M(N,m)=N^{N^{m+1}-N^{m-1}}  \det M_2.$ Repeatedly using this idea and we get
for $j=2,\dots, m$
\begin{equation*}
 \det M(N,m)=N^{N^{m+1}-N^{m-1}} (2N)^{N^{m-2}(N-1)}  \cdot \big((j-1)N\big)^{N^{m-j+1}(N-1)} \det M_j
\end{equation*}
where $M_{j}$ is $N^{m-j+1}\times N^{m-j+1}$ matrix with the form
$$M_{j}= (jN) I_{N^{m-j+1}}+\sum_{r=0}^{m-j} \Big(E^{(m-j,r)}_{i,j} \Big)_{0\le i,j< N^{m-j+1} }$$
This can be proved by induction on $j$. Hence when $j=m$ we get
\begin{equation*}
 \det M(N,m)=N^{N^{m+1}-N^{m-1}} (2N)^{N^{m-2}(N-1)}  \cdot \big((m-1)N\big)^{N(N-1)} \det M_m
\end{equation*}
where $M_m$ is the $N\times N$ matrix with the following form
\begin{equation*}
\begin{pmatrix}
mN+1 & 1 & 1 & \cdots & 1   \\
1 & mN+1  & 1 & \cdots & 1    \\
1 & 1 & mN+1 & \cdots & 1   \\
\vdots &\vdots &\ddots &\vdots &\vdots \\
1 & 1 & 1 & \cdots & mN+1
\end{pmatrix}.
\end{equation*}
Adding all the rows to the first row we get $\det M_m=(m+1)N  \det M_m'$ where
\begin{equation*}
M_m'=\begin{pmatrix}
1 & 1 & 1 & \cdots & 1   \\
1 &  mN+1 & 1 &\cdots & 1   \\
1 & 1 & mN+1 & \cdots & 1   \\
\vdots &\vdots &\ddots &\vdots &\vdots \\
1 & 1 & 1 & \cdots & mN+1
\end{pmatrix}.
\end{equation*}
Subtracting the first row from all the other rows we get $\det M_m'=\det M_m''$ where
\begin{equation*}
M_m''=\begin{pmatrix}
1 & 1 & 1 & \cdots & 1   \\
0 & mN & 0 & \cdots & 0    \\
0 & 0 & mN & \cdots & 0   \\
\vdots &\vdots &\ddots &\vdots &\vdots \\
0 & 0 & 0 & \cdots & mN
\end{pmatrix}.
\end{equation*}
Thus
\begin{equation*}
 \det M(N,m)=N^{N^{m+1}-N^{m-1}} (2N)^{N^{m-2}(N-1)}
 \big((m-1)N\big)^{N(N-1)} (mN)^{N-1} (m+1) N.
\end{equation*}
Simplifying we arrive at \eqref{equ:MatrixMNm}.
\end{defn}

\begin{eg} \label{eg:N=2m=2}
When $N=2$ and $m=1$ we have $\gG_1=-\ln(2)$ and
\begin{equation*}
\zeta^\ast_2(1;0):= \frac12 (T-\ln(2)), \quad \zeta^\ast_2(1;1):= \frac12 (T+\ln(2)).
\end{equation*}
When $m=2$ we first write down
\begin{align}
T\cdot \zeta^\ast_2(1;0)=&\, 2\zeta^\ast_2(1,1;0,0)+\zeta^\ast_2(1,1;0,1)+\zeta^\ast_2(1,1;1,0)+\zeta_2(2;0),\label{equ:10A}\tag{$A_{1,0}$}\\
T\cdot \zeta^\ast_2(1;1)=&\, \zeta^\ast_2(1,1;0,1)+\zeta^\ast_2(1,1;1,0)+2\zeta^\ast_2(1,1;1,1)+\zeta^\ast_2(2;1).\label{equ:11A}\tag{$A_{1,1}$}
\end{align}
Then we write down
\begin{align}
\gG_1(1;0) =&\, \zeta^\ast_2(1,1;0,0)-\zeta^\ast_2(1,1;1,0),\label{equ:10B}\tag{$B_{1,0}$}\\
\gG_1(1;1) =&\, \zeta^\ast_2(1,1;0,1)-\zeta^\ast_2(1,1;1,1).\label{equ:11B}\tag{$B_{1,1}$}
\end{align}
Computing \eqref{equ:10A}$\pm$\eqref{equ:10B} and \eqref{equ:11A}$\pm$\eqref{equ:11B} we get
\begin{align*}
3\zeta^\ast_2(1,1;0,0)+\zeta^\ast_2(1,1;0,1)=&\, T\cdot \zeta^\ast_2(1;0)+\gG_1(1;0) -\zeta_2(2;0),\\
2\zeta^\ast_2(1,1;0,1)+ \zeta^\ast_2(1,1;1,0)+ \zeta^\ast_2(1,1;1,1)=&\, T\cdot \zeta^\ast_2(1;1)+\gG_1(1;1) -\zeta_2(2;1),\\
\zeta^\ast_2(1,1;0,0)+2\zeta^\ast_2(1,1;1,0)+\zeta^\ast_2(1,1;0,1)=&\, T\cdot \zeta^\ast_2(1;0)-\gG_1(1;0) -\zeta_2(2;0),\\
3\zeta^\ast_2(1,1;1,1)+ \zeta^\ast_2(1,1;1,0)=&\, T\cdot \zeta^\ast_2(1;1)-\gG_1(1;1) -\zeta_2(2;1).
\end{align*}
Thus
\begin{equation*}
\begin{bmatrix}
  3 & 1 & 0 & 0 \\
  0 & 2 & 1 & 1 \\
  1 & 1 & 2 & 0 \\
  0 & 0 & 1 & 3
\end{bmatrix}
\begin{bmatrix}
  \zeta^\ast_2(1,1;0,0) \\
  \zeta^\ast_2(1,1;0,1) \\
  \zeta^\ast_2(1,1;1,0) \\
  \zeta^\ast_2(1,1;1,1)
\end{bmatrix}
=\begin{bmatrix}
   T\cdot \zeta^\ast_2(1;0)+\gG_1(1;0) -\zeta_2(2;0) \\
   T\cdot \zeta^\ast_2(1;1)+\gG_1(1;1) -\zeta_2(2;1) \\
   T\cdot \zeta^\ast_2(1;0)-\gG_1(1;0) -\zeta_2(2;0) \\
   T\cdot \zeta^\ast_2(1;1)-\gG_1(1;1) -\zeta_2(2;1)
 \end{bmatrix}.
\end{equation*}
Hence
\begin{align*}
\begin{bmatrix}
  \zeta^\ast_2(1,1;0,0) \\
  \zeta^\ast_2(1,1;0,1) \\
  \zeta^\ast_2(1,1;1,0) \\
  \zeta^\ast_2(1,1;1,1)
\end{bmatrix}
=&\,
\frac1{16}
\begin{bmatrix}
  \phantom{-} 5 & -3 & \phantom{-} 1 & \phantom{-} 1 \\
  \phantom{-} 1 & \phantom{-} 9 & -3 & -3 \\
  -3 & -3 & \phantom{-} 9 & \phantom{-} 1 \\
  \phantom{-} 1 & \phantom{-} 1 & -3 & \phantom{-} 5
\end{bmatrix}
\begin{bmatrix}
   \frac{T}2(T+\gG_1)+\gG_1(1;0) -\zeta_2(2;0) \\
   \frac{T}2(T-\gG_1)+\gG_1(1;1) -\zeta_2(2;1) \\
   \frac{T}2(T+\gG_1)-\gG_1(1;0) -\zeta_2(2;0) \\
   \frac{T}2(T-\gG_1)-\gG_1(1;1) -\zeta_2(2;1)
 \end{bmatrix}    \\
 =&\, \frac1{16}
\begin{bmatrix}
2 T^2+4 \gG_1 T+4 \gG_1(1;0)-4 \gG_1(1;1)-6 \zeta_2(2;0)+2 \zeta_2(2;1)\\
2 T^2-4 \gG_1 T+4 \gG_1(1;0)+12 \gG_1(1;1)+2 \zeta_2(2;0)-6 \zeta_2(2;1)\\
2 T^2+4 \gG_1 T-12 \gG_1(1;0)-4 \gG_1(1;1)-6 \zeta_2(2;0)+2 \zeta_2(2;1)\\
2 T^2-4 \gG_1 T+4 \gG_1(1;0)-4 \gG_1(1;1)+2 \zeta_2(2;0)-6 \zeta_2(2;1)
\end{bmatrix}.
\end{align*}
By comparing with the stuffle relation
\begin{equation*}
\frac14(T+\gG_1)^2=\zeta^\ast_2(1;0)^2=2\zeta^\ast_2(1,1;0,0)+\zeta_2(2;0)
\end{equation*}
we get a relation
\begin{equation*}
\gG_1^2+2\gG_1(1;1)-2\gG_1(1;0)=\zeta_2(2;1)+\zeta_2(2;0)=\zeta(2),
\end{equation*}
which can also be proved by the fact that $\gG_1^2=\zeta(\bar1)^2=2\zeta(\bar1,\bar1)+\zeta(2)$,
$2\gG_1(1;0)=\zeta(\bar1,1)+\zeta(\bar1,\bar1)$ and $2\gG_1(1;1)=\zeta(\bar1,1)-\zeta(\bar1,\bar1)$.
\end{eg}

We leave the interested reader to check that the above Definition~\ref{defn:regularize} is
consistent with the $*$-regularized values of MPVs
given by Racinet in \cite{Racinet2002}. Namely, \eqref{equ:MZV=MPVLevelN} can
be extended to these regularized values:
\begin{equation}\label{equ:MZV=MPVLevelNReg}
\zeta^\ast_N(\bfs;\bfga)=\frac1{N^d} \sum_{a_1=1}^N\cdots \sum_{a_d=1}^N
    \eta^{-(a_1\ga_1+\cdots+a_d\ga_d)} \calL^\ast_N(\bfs; a_1,\dots,a_d)
\end{equation}
for all $\bfga\in\Z/N\Z$ and $\bfs\in\N^d$ with $s_1=1$.

Notice that Conjecture~\ref{conj:MZV=MPVLevelN} becomes easy to verify if we extend the scalars to
the cyclotomic field $\Q(\eta_N)$. Conjecturally, every linear relations between MPVs
over $\overline{\Q}$ (the algebraic closure of $\Q$) must be a consequence of some linear
relations over $\Q$ (the same should be true for MZVs at level $N$). This is the motivation
for our conjecture in the first place. To show the $\Q(\eta_N)$-version of the conjecture,
using the identity
\begin{equation*}
    \sum_{j=0}^{N-1}  \eta^{Mj}= \left\{
                              \begin{array}{ll}
                                N, & \hbox{if $N|M$;} \\
                                \, 0, & \hbox{otherwise,}
                              \end{array}
                            \right.
\end{equation*}
we can easily find
\begin{equation}\label{equ:MZV=MPVLevelN}
\begin{split}
\zeta_N(\bfs;\bfga) &\, =\frac{1}{N^d}   \sum_{k_1>\dots>k_d>0}
 \sum_{a_1=1}^N \eta^{a_1(k_1-\ga_1)} \cdots \sum_{a_d=1}^N \eta^{a_d(k_d-\ga_d)}
\frac{1}{k_1^{s_1}\cdots k_d^{s_d}} \\
&\, =\frac1{N^d} \sum_{a_1=1}^N\cdots \sum_{a_d=1}^N  \eta^{-(a_1\ga_1+\cdots+a_d\ga_d)}
\calL_N(\bfs; a_1,\dots,a_d).
\end{split}
\end{equation}
Let $h=\eta^{-1}$. It is not difficult to see that the
determinant of the $N^d\times N^d$ matrix
$$\Big[h^{a_1\ga_1+\cdots+a_d\ga_d}\Big]_{1\le a_1,\dots,a_d,\ga_1,\dots,\ga_d\le N}$$
is given by $\det(V)^{N^{2d-2}}$ where $V$ is the Vandermonde matrix
$\big[h^{a \ga} \big]_{1\le a,\ga\le N}$. Set $f(x)=x^N-1$. Then
$$(-1)^{\binom{N}{2}}\det(V)^2=\prod_{1\le a\ne b\le N} (h^a-h^b)=\prod_{a=1}^N f'(h^a)
= N^N  \left(\prod_{a=1}^N h^a\right)^{N-1} =(-1)^{N-1} N^N.
$$
Therefore $\det(V)^2=\pm N^N$ where we take $+$ for $N\equiv 2\pmod{4}$ and $-$ otherwise.
Consequently, for every fixed
$(a_1,\dots,a_d)\in(\Z/N\Z)^d$ the MPV $\calL_N(\bfs; a_1,\dots,a_d)$ can be expressed
as a $\Q(\eta_N)$-linear combinations of $\zeta_N(\bfs;\bfga)$ ($\bfga\in(\Z/N\Z)^d$).
The above argument proves the following important fact.
\begin{thm}\label{thm:MZVMPVlevelN}
Let $\Q_N=\Q(\eta_N)$ be the $N$-th cyclotomic field. As $\Q_N$-vector spaces we have
$$\dim_{\Q_N} \MZV^*_\Q(w,N)\otimes \Q_N =\dim_{\Q_N} \MPV^*_\Q(w,N)\otimes \Q_N.$$
\end{thm}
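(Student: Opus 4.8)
The plan is to show that the two $\Q_N$-vector spaces are not merely isomorphic in dimension but are literally equal as subspaces of some common ambient space, which will immediately give the equality of dimensions. First I would fix the weight $w$ and level $N$, and let $W^*_\Q(w,N)$ denote the $\Q$-span inside $\MZV_N[T]$ (or inside $\C[T]$ after specializing $T$ to the usual transcendental, whichever is cleaner) of the $*$-regularized values $\zeta^\ast_N(\bfs;\bfga)$ with $|\bfs|=w$, and similarly let $U^*_\Q(w,N)$ be the $\Q$-span of the $*$-regularized MPVs $\calL^\ast_N(\bfs;\bfa)$ with $|\bfs|=w$. After extending scalars to $\Q_N$, I claim $W^*_\Q(w,N)\otimes_\Q\Q_N = U^*_\Q(w,N)\otimes_\Q\Q_N$ as $\Q_N$-subspaces.

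The key mechanism is the linear change of variables already recorded in the excerpt. Equation \eqref{equ:MZV=MPVLevelNReg}, together with its convergent analogue \eqref{equ:MZV=MPVLevelN}, expresses each $\zeta^\ast_N(\bfs;\bfga)$ as an explicit $\Q_N$-linear combination of the $\calL^\ast_N(\bfs;\bfa)$ (with the same $\bfs$, hence the same weight), via the finite Fourier transform matrix $[\eta^{-(a_1\ga_1+\cdots+a_d\ga_d)}]$. This shows $W^*_\Q(w,N)\otimes\Q_N \subseteq U^*_\Q(w,N)\otimes\Q_N$. For the reverse inclusion I would invoke the invertibility of that transform: the matrix $[h^{a_1\ga_1+\cdots+a_d\ga_d}]$ has determinant $\det(V)^{N^{2d-2}}$ with $V$ the Vandermonde matrix in $h=\eta^{-1}$, and the computation $\det(V)^2=\pm N^N \ne 0$ shows it is nonsingular over $\Q_N$. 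Hence each $\calL^\ast_N(\bfs;\bfa)$ is a $\Q_N$-linear combination of the $\zeta^\ast_N(\bfs;\bfga)$, giving $U^*_\Q(w,N)\otimes\Q_N \subseteq W^*_\Q(w,N)\otimes\Q_N$. The two inclusions force equality of the $\Q_N$-spaces, and taking $\dim_{\Q_N}$ yields the theorem.

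One point that needs a little care is that the Fourier-inversion identities \eqref{equ:MZV=MPVLevelN} and \eqref{equ:MZV=MPVLevelNReg} are stated only for convergent indices and for the ``leading'' divergent case $s_1=1$ respectively; to cover all regularized values of weight $w$ one should note that the $*$-regularization is built (in Definition~\ref{defn:regularize}) purely from stuffle relations with $\Q$-coefficients once the base cases $\zeta^\ast_N(1;\gb)$ are pinned down, and that Racinet's $*$-regularization of MPVs is likewise stuffle-driven. Thus the compatibility \eqref{equ:MZV=MPVLevelNReg} propagates from the base cases to all of $\MZV^*_\Q(w,N)$ by the same linear algebra used to define these values, because the finite Fourier transform intertwines the two stuffle structures (the colors $\ga_j\in\Z/N\Z$ on the $\zeta_N$ side correspond under $\widehat{\ }$ to the root-of-unity arguments $\eta^{a_j}$ on the MPV side, and the stuffle product is ``diagonal'' with respect to this correspondence). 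I would state this intertwining as the one genuine verification and refer to the parenthetical remark already made in the excerpt that the reader may check Definition~\ref{defn:regularize} agrees with Racinet's.

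The main obstacle, then, is precisely this last consistency check: making sure the Fourier transform is an isomorphism of the two regularized stuffle algebras in all degrees of $T$, not just on convergent values. Everything else — the Vandermonde determinant computation, the passage from ``equal subspaces'' to ``equal dimensions'' — is immediate from what has already been established. If one prefers to avoid the stuffle-intertwining argument, an alternative is to take \eqref{equ:MZV=MPVLevelNReg} as the definition-by-fiat of $\zeta^\ast_N(\bfs;\bfga)$ for $s_1=1$ and check a posteriori that it satisfies the stuffle relations, reducing the theorem to the single nonvanishing statement $\det(V)\neq 0$; I would mention this as the quick route and leave the compatibility with Definition~\ref{defn:regularize} to the reader, exactly as the surrounding text does.
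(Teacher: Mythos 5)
Your proposal is correct and follows essentially the same route as the paper: the finite Fourier transform identity \eqref{equ:MZV=MPVLevelN}/\eqref{equ:MZV=MPVLevelNReg} gives one inclusion of the $\Q_N$-spans, and the nonvanishing of the Vandermonde determinant ($\det(V)^2=\pm N^N$) inverts it to give the other. Your explicit flagging of the regularization-compatibility check for $s_1=1$ is exactly the point the paper also defers to the reader, so nothing further is needed.
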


The $\sha$-regularized version of $\zeta(1; \alpha)$ is defined as follows.

\begin{defn}
$$\zeta^{\sha}(1; \alpha)= \frac{1}{N}\left(T+\sum_{n=1}^{N-1}  \eta^{-n\ga} Li_1(\eta^n)
\right).$$
\end{defn}

Now let $m\ge 1$, $d\ge 0$ and suppose $\calL^\sha_N(\{1\}^m,\bfs;\bfgb',\bfga)$ have been defined
as a polynomial in $\MZV_N[T]$ of degree at most $m$
for all $\bfgb'=(\gb_1,\dots,\gb_m)\in(\Z/N\Z)^m$,
$\bfga\in(\Z/N\Z)^d$ and $\bfs=(s_1,\dots,s_d)\in\N^d$ with $s_1>1$.  Using the partial fractions $$\frac{t^{a-1}}{1-t^N} =\frac1N \sum_{m=1}^N \frac{\eta^{-(a-1)m}}{1-\eta^{m}t},$$ we obtain
\begin{align*}
\zeta^{\sha}_N(\bfs; \bfga)&= \int_0^1 \omega^{s_1-1}\frac{t^{\alpha_1-\alpha_2-1}dt}{1-t^N} \cdots \omega^{s_d-1} \frac{t^{\alpha_d-1}dt}{1-t^N}\\
&= \frac{1}{N^d} \sum_{m_1, \cdots, m_d=1}^{N}\eta^{-\sum_{i=1}^{d-1}m_i(\alpha_i-\alpha_{i+1}-1)-(\alpha_d-1)m_d}
\calL^{\sha}_N(\bfs; \eta^{\alpha_1}, \cdots, \eta^{\alpha_d}),
\end{align*}
for $\bfs\in \N^n$ and $\bfga\in (\Z/N\Z)^n.$
Thus for $\bfs= (\{1\}^{m+1}, \bfs')$ with $\bfs'=(s_1, \cdots, s_{d})\in \N^{d}$ and $s_1>1$, $\bfga=(\alpha_1, \cdots, \alpha_d)\in (\Z/N\Z)^{d}$ and $\bfgb=(\gb_0,\cdots, \gb_m)\in (\Z/N\Z)^{m+1}$,
\begin{multline*}
\zeta^{\sha}_N(\{1\}^{m+1}, \bfs'; \bfb, \bfga) = \frac{1}{N^d} \sum_{m_1, \cdots, m_d=1}^{N}\eta^{-\sum_{i=1}^{d-1}m_i(\alpha_i-\alpha_{i+1}-1)-(\alpha_d-1)m_d}\\
\cdot\calL^{\sha}_N(\{1\}^{m+1},\bfs'; \eta^{\gb_0},\cdots,\eta^{\gb_m},\eta^{\alpha_1}, \cdots, \eta^{\alpha_d}),
\end{multline*}
where
\begin{multline*}
\calL^{\sha}_N(\{1\}^{m+1},\bfs'; \eta^{\gb_0},\cdots,\eta^{\gb_m},\eta^{\alpha_1}, \cdots, \eta^{\alpha_d})\\
 = \frac{1}{m+1}[ Li_{1}^{\sha}(\eta^{\gb_0})\cdot \calL^{\sha}_{N} (\{1\}^{m}, \bfs'; \eta^{\gb_1},\cdots,\eta^{\gb_m},\eta^{\alpha_1}, \cdots, \eta^{\alpha_d})\\
- \sum_{j=1}^{d-m-1}\sum_{t_j=2}^{s_j} \calL^{\sha}_{N}(\{1\}^{m}, s_1, \cdots, t_j, s_j+1-t_j, \cdots, s_{d-m-1};\eta^{\gb_0},\cdots,\eta^{\gb_m}, \eta^{\alpha_1}, \cdots, \eta^{\alpha_d}) \\
- \calL^{\sha}_{N}(\{1\}^m, \bfs', 1; \eta^{\gb_0},\cdots,\eta^{\gb_m},\eta^{\alpha_1}, \cdots, \eta^{\alpha_d})]
\end{multline*}
with $Li^{\sha}_1(\eta^{\alpha_1}) = \left\{
                                          \begin{array}{ll}
                                            T, & \hbox{if $\alpha_1 \equiv 0 \pmod{N}$;} \\
                                            Li_1(\eta^{\alpha_1}), & \hbox{if $\alpha_1 \not\equiv 0\pmod{N}$.}
                                          \end{array}
                                        \right.$\\

Next we want to study the relation between the above two different regulations. Our goal is to show that both regularizations still satisfy the same relation obtained by Ihara, Kaneko,and Zagier 
(see \cite[Theorem 1]{IKZ2006}). By \eqref{equ:MZV=MPVLevelNReg}, it suffices to prove the same relation for MPV $\calL_N(\boldsymbol{s}; a_1, \cdots, a_d),$ which has been done by Racinet \cite{Racinet2002}.

We now recall some of the results from \cite{Racinet2002}. For convenience we use Racinet's notation. 
For a set $Z$ and a field $k$, let $k\langle Z\rangle$ (resp.\ $\mathfrak{Lib}_k(Z)$) be the free (noncommutative) associative algebra (resp.\ the free Lie algebra) on $Z$ with coefficients in $k$. We denote by $k\langle\!\langle Z\rangle\!\rangle$ the set of all formal series on $Z$ over the field $k$. The algebra $k\langle Z\rangle$, when viewed as the enveloping  bigebra of $\mathfrak{Lib}_k(Z)$, has the coproduct $\Delta$ defined by $\Delta z = 1\otimes z + z\otimes 1,$ for $\forall z\in Z.$

Let $\gG$ be a multiplicative finite group of $\C^{\times}$. In this paper, one may take $\gG$ to be the group of $N$-th roots of unity. Then one defines the alphabet $X:=\{x_\gs: \gs\in\gG\cup \{0\}\}.$
For $(n, \nu)\in \N\times\gG$, let $y_{n, \nu}= x_0^{n-1}x_{\nu}$. We denote by $Y$ the set of all $y_{n, \nu}$ for $(n, \nu)\in \N\times\gG.$ The subalgebra of $k\langle X\rangle$ generated by $Y$ is denoted by $k\langle Y\rangle$ and is free on $Y$. As a vector space, it is generated by the words on $X$ not ending by $x_0$ thus it could be identified with the quotient $k\langle X\rangle/I_0$, where $I_0= k \langle X \rangle x_0.$ The corresponding projection is denoted by $\pi_Y.$  The vector subspace $k \langle X\rangle_{cv}$ of $k \langle X\rangle$ generated by words on $X$ that are not ending by $x_0$ and not starting by $x_1$ is also a graded algebra. It may be identified with the quotient $k\langle X\rangle/I$, where $I= (x_1 k \langle X \rangle + k \langle X\rangle x_0)$. We denote by $\pi_{cv}$ the corresponding projection. The algebra $k\langle X\rangle_{cv}$ is same as the subalgebra of $k\langle Y\rangle$ generated by words on $Y$ that are not starting by $y_{1,1}=x_1.$ Thus it can be identified with the quotient $k \langle Y\rangle/ y_{1,1}k\langle Y\rangle,$ which is denoted by $k\langle Y\rangle_{cv}.$ Again its corresponding projection is denoted by $\pi_{cv}.$ Similarly one defines $k\langle\!\langle X\rangle\!\rangle_{cv}$ and $k\langle\!\langle Y\rangle\!\rangle_{cv}.$

We extend the definition of $y_{n, \nu}$ to the case where $n=0$
$$y_{0, \nu}=
\left\{
  \begin{array}{ll}
    1, & \hbox{if $\nu =1$;} \\
    0, & \hbox{if $\nu\neq 1$.}
  \end{array}
\right.$$
Then define the coproduct $$\Delta_{\ast} (y_{n, \nu})= \sum_{\substack{k+l=n\\ \kappa +\lambda=\nu}}y_{k, \kappa} \otimes y_{l, \lambda}.$$ It's easy to check (see \cite{Racinet2002}) that $(k\langle Y\rangle, \cdot, \Delta_{\ast})$ is a bialgebra. Similarly, one can check that  the associative algebra $k\langle\!\langle X\rangle\!\rangle$ is a bialgebra (see \cite{Reutenauer1993}). So are $k\langle\!\langle Y\rangle\!\rangle, k\langle\!\langle X\rangle\!\rangle_{cv}$ and $k\langle\!\langle Y\rangle\!\rangle_{cv}$. Recall that an element $x$ in any coalgebra $(C, \epsilon, \Delta)$ is called \emph{diagonal} if $\Delta(x)= x\otimes x$ and $\epsilon(x)=1.$

For any word $y_{s_1, \gs_1}y_{s_2, \gs_2}\cdots y_{s_r, \gs_r}$ on $Y$, we define the endomorphism $\bfq$ of $k\langle Y\rangle$ as  $$\bfq(y_{s_1, \gs_1}y_{s_2, \gs_2}\cdots y_{s_r, \gs_r})= y_{s_1, \gs_1}y_{s_2, \gs_2\gs_1^{-1}} \cdots y_{s_r, \gs_r\gs_{r-1}^{-1}}.$$

Recall the multiple polylog  $Li_{s_1, \cdots, s_d}(z_1, \cdots, z_d)$ is defined by
\begin{equation}
Li_{s_1,\dots,s_d}(z_1,\dots,z_d)=\sum_{k_1>\dots>k_d>0}
\frac{z_1^{k_1}\cdots z_d^{k_d}}{ k_1^{s_1}\cdots k_d^{s_d}},
\end{equation}
Let $$I_{[0, 1]}(\gs_1, \cdots, \gs_r)= \int_{1\geq t_1 \geq \cdots t_r\geq 0} \bigwedge_{i=1}^r \omega_{\gs_i}(t_i),$$ where $$\omega_{\gs_i}(t_i)=
\left\{
  \begin{array}{ll}
    \frac{d t_i}{\gs_i^{-1} -t_i}, & \hbox{if $\gs_i \neq 0$;} \\
    \frac{dt_i}{t_i}, & \hbox{if $\gs_i=0$.}
  \end{array}
\right.$$ Then $$Li_{s_1, \cdots, s_r} (1, \cdots, 1) = \zeta(s_1, \cdots, s_r)= I_{[0, 1]}(\{0\}^{s_1-1}, 1, \cdots, \{0\}^{s_{r}-1}, 1).$$

The generating series of $Li_{s_1, \cdots, s_r}(z_1, \cdots, z_r)$ and $I_{[0, 1]}(\gs_1, \cdots, \gs_r)$ are defined as follows:
\begin{align*}
\calL_{cv}:=& \sum_{\substack{s_1, \cdots, s_r\in\N, \gs_1, \cdots, \gs_r\in\gG \\
                              (s_1, \gs_1)\ne (1,1)}}
        Li_{s_1, \cdots, s_r}(\gs_1, \cdots, \gs_r) y_{s_1, \gs_1} \cdots y_{s_r, \gs_r},\\
\calI_{cv}:=& \sum_{\gs_{1}, \cdots, \gs_{r-1}\in\gG\cup\{0\}, \gs_r\in\gG}
        I_{[0, 1]}(\gs_1, \cdots, \gs_r) x_{\gs_1} \cdots x_{\gs_r}.
\end{align*}
It is clear one can think of them as two elements in $\C\langle\!\langle Y\rangle\!\rangle_{cv}$ and
$\C\langle\!\langle X\rangle\!\rangle_{cv}$, respectively. Racinet further shows
\begin{prop}
The series $\calL_{cv}$ and $\calI_{cv}$ are diagonal elements in $\C\langle\!\langle Y\rangle\!\rangle_{cv}$ and $\C\langle\!\langle X\rangle\!\rangle_{cv}$, respectively.
\end{prop}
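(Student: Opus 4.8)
The plan is to recognize the proposition as a repackaging of the stuffle product formula for multiple polylogarithms and the shuffle product formula for iterated integrals, and to deduce it from these by duality. First note that the counit condition in the definition of \emph{diagonal} is automatic: the coefficient of the empty word in each of $\calL_{cv}$ and $\calI_{cv}$ is the empty product $1$. So it remains to prove $\Delta_\ast(\calL_{cv})=\calL_{cv}\otimes\calL_{cv}$ and $\Delta(\calI_{cv})=\calI_{cv}\otimes\calI_{cv}$. These are coefficient-wise assertions, and in each fixed bidegree only finitely many words contribute — for $\Delta_\ast$, a word $w$ contributing to the coefficient of $u\otimes v$ satisfies $\max(|u|,|v|)\le|w|\le|u|+|v|$, since in a nonzero term of $\Delta_\ast(w)$ every letter of $w$ supplies a letter to at least one tensor factor; for $\Delta$, only words of length $|u|+|v|$ occur — so the identities are well posed in the completed algebras.

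The main reduction is duality. With respect to the pairing for which words are orthonormal, the bialgebra $(\C\langle Y\rangle,\cdot,\Delta_\ast)$ is dual to the quasi-shuffle (stuffle) bialgebra, the product adjoint to $\Delta_\ast$ being the stuffle product $\ast$; pairing $\Delta_\ast(\calL_{cv})$ with $u\otimes v$ therefore turns the first identity into
$$Li_u\cdot Li_v = Li_{u\ast v}\qquad\text{for all admissible words }u,v\text{ on }Y,$$
where $Li$ is extended $\C$-linearly, i.e.\ into the statement that $Li$ is an algebra homomorphism $(\C\langle Y\rangle_{cv},\ast)\to\C$. Similarly the coproduct $\Delta$ with $\Delta x=1\otimes x+x\otimes1$ has the shuffle product $\sha$ as its adjoint, so $\Delta(\calI_{cv})=\calI_{cv}\otimes\calI_{cv}$ becomes
$$I_{[0,1]}(u)\cdot I_{[0,1]}(v)=I_{[0,1]}(u\sha v)\qquad\text{for all }cv\text{ words }u,v\text{ on }X.$$
One also checks stability of the relevant subspaces: the stuffle of two admissible words is a sum of admissible words (a merged letter $y_{k+l,\kappa\lambda}$ has $k+l\ge2$, so $\ne(1,1)$), and the shuffle of two $cv$ words is a sum of $cv$ words (its terms still do not start with $x_1$ nor end with $x_0$).

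Next I would establish the two product formulas by the classical elementary computations. For the stuffle formula, multiply the defining nested series of $Li_u$ and $Li_v$ and split the double sum according to the ways the two strictly decreasing index tuples interleave, coincidences of indices producing the merged exponents and colors; this is exactly the combinatorial definition of $u\ast v$. The rearrangement is valid when all arguments lie strictly inside the unit disc, where the series converge absolutely, and one then lets them tend radially to the prescribed roots of unity, applying Abel's limit theorem — available precisely because in the $cv$ range the series converge on the closed polydisc minus the forbidden faces. For the shuffle formula, write $I_{[0,1]}(u)\,I_{[0,1]}(v)$ as an integral over a product of two simplices, decompose that product up to a null set into the $\binom{|u|+|v|}{|u|}$ simplices indexed by the shuffles, and apply Fubini; the $cv$ conditions make the forms $\omega_{\gs_i}$ integrable near $0$ and $1$, so one first restricts all variables to $[\veps,1-\veps]$, carries out the elementary decomposition there, and passes to the limit $\veps\to0^+$.

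I expect the only genuine obstacle to be this passage to the boundary. At honest roots of unity the series for $\calL_{cv}$ and the integrals for $\calI_{cv}$ converge only conditionally, so neither the series manipulation nor Fubini can be applied directly; the fix in both cases is to prove the identity on the open region of absolute convergence and then use a limiting argument — Abel summation on the series side, $\veps$-truncation of the simplex on the integral side — the care being in verifying the requisite uniformity and dominated-convergence hypotheses. Everything else is bookkeeping. (This proposition is due to Racinet \cite{Racinet2002}; the above only sketches the argument.)
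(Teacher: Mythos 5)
Your argument is correct and is precisely the standard one: the paper itself gives no proof of this proposition (it is quoted from Racinet \cite{Racinet2002}), and Racinet's proof is exactly the duality you describe, namely that group-likeness of $\calL_{cv}$ for $\Delta_\ast$ is adjoint to the stuffle product formula $Li_u\cdot Li_v=Li_{u\ast v}$, while group-likeness of $\calI_{cv}$ for $\Delta$ is adjoint to the shuffle formula for iterated integrals, both established first in the region of absolute convergence and then extended to the boundary. Your attention to the stability of the $cv$ subspaces and to the conditional-convergence issues at roots of unity is exactly where the real (if routine) work lies, so nothing essential is missing.
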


By straight-forward computation one finds the following crucial relation between the above two elements.
\begin{prop} One has
$$\calL_{cv} = \bfq(\calI_{cv}).$$
\end{prop}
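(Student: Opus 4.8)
The plan is to reduce $\calL_{cv}=\bfq(\calI_{cv})$ to the classical formula expressing a multiple polylogarithm as an iterated integral, and then to prove that formula by an induction on the weight. \emph{Reduction to a scalar identity.} Recall the dictionary between words: a word on $X$ not ending in $x_0$ factors uniquely as $x_0^{n_1-1}x_{\nu_1}\cdots x_0^{n_r-1}x_{\nu_r}=y_{n_1,\nu_1}\cdots y_{n_r,\nu_r}$, it lies in the $cv$ part precisely when $(n_1,\nu_1)\ne(1,1)$, and $k\langle X\rangle_{cv}=k\langle Y\rangle_{cv}$, so $\calI_{cv}$ may be regarded as a formal series in the $y_{n,\nu}$ to which $\bfq$ applies. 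From the definition of $\bfq$ and the fact that $\gG$ is abelian, the unique $\bfq$-preimage of $y_{s_1,\gs_1}\cdots y_{s_r,\gs_r}$ is the $Y$-word with the same exponents $s_j$ but labels $\nu_j=\gs_1\gs_2\cdots\gs_j$ (solving $\nu_j\nu_{j-1}^{-1}=\gs_j$). Hence the coefficient of $y_{s_1,\gs_1}\cdots y_{s_r,\gs_r}$ in $\bfq(\calI_{cv})$ is $I_{[0,1]}$ evaluated on the word $x_0^{s_1-1}x_{\gs_1}x_0^{s_2-1}x_{\gs_1\gs_2}\cdots x_0^{s_r-1}x_{\gs_1\cdots\gs_r}$, while the coefficient of the same word in $\calL_{cv}$ is $\Li_{s_1,\dots,s_r}(\gs_1,\dots,\gs_r)$. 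As both series are graded by the weight $s_1+\cdots+s_r$ with finitely many words in each degree and $\bfq$ preserves this grading, the proposition is equivalent to the identities
$$\Li_{s_1,\dots,s_r}(z_1,\dots,z_r)=I_{[0,1]}\bigl(\{0\}^{s_1-1},\,z_1,\,\{0\}^{s_2-1},\,z_1z_2,\,\dots,\,\{0\}^{s_r-1},\,z_1z_2\cdots z_r\bigr)$$
for all $r\ge1$, $s_j\in\N$, $z_j\in\gG$ with $(s_1,z_1)\ne(1,1)$ (the left side converges exactly under this condition, which matches the condition $\tau_1\ne1$, $\tau_r\ne0$ making the right side converge, so nothing is lost).

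\emph{Proof of the scalar identity.} I would prove the slightly stronger statement with a running endpoint $x\in[0,1)$,
$$I_{[0,x]}\bigl(\{0\}^{s_1-1},w_1,\{0\}^{s_2-1},w_2,\dots,\{0\}^{s_r-1},w_r\bigr)=\Li_{s_1,\dots,s_r}(xz_1,z_2,\dots,z_r),\qquad w_j:=z_1z_2\cdots z_j,$$
by induction on the weight $n=s_1+\cdots+s_r$, then let $x\to1^-$. The base case $n=1$ is $\int_0^x dt/(w_1^{-1}-t)=-\log(1-z_1x)=\Li_1(z_1x)$. For the step, peel off the outermost one-form. If $s_1\ge2$ it is $dt_1/t_1$ and the remaining integral is, by induction, $\Li_{s_1-1,s_2,\dots,s_r}(t_1z_1,z_2,\dots,z_r)$; integrating its series (locally uniformly convergent on $[0,1)$) term by term against $dt_1/t_1$ over $[0,x]$ raises the leading exponent from $s_1-1$ to $s_1$. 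If $s_1=1$ it is $w_1\,dt_1/(1-w_1t_1)=\sum_{m\ge1}w_1^m t_1^{m-1}\,dt_1$, and the remaining integral is, by the induction hypothesis in depth $r-1$, $\Li_{s_2,\dots,s_r}(t_1w_2,z_3,\dots,z_r)$, the merged boundary point $w_2=z_1z_2$ now playing the role of the first argument; multiplying the two, integrating $t_1$ over $[0,x]$, and reindexing $k_1:=m+k_2>k_2$ together with $w_1^{k_1-k_2}w_2^{k_2}=z_1^{k_1}z_2^{k_2}$ produces exactly $\Li_{1,s_2,\dots,s_r}(xz_1,z_2,\dots,z_r)$. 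Finally Abel's theorem on the series side and continuity of $x\mapsto I_{[0,x]}(\cdots)$ as $x\to1^-$ — valid precisely when $(s_1,z_1)\ne(1,1)$ — give the identity at $x=1$.

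\emph{Reassembly and main difficulty.} Summing the scalar identity against $y_{s_1,\gs_1}\cdots y_{s_r,\gs_r}$ over all convergent multi-indices and applying $\bfq$ term by term yields $\calL_{cv}=\bfq(\calI_{cv})$. I expect the main obstacle to be the bookkeeping in the $s_1=1$ step: one must verify that the boundary data of the depth-$(r-1)$ sub-integral shift correctly and that the geometric series from $w_1\,dt_1/(1-w_1t_1)$ recombines, after the reindexing $k_1=m+k_2$, into precisely the factor $z_1^{k_1}z_2^{k_2}$ — this telescoping being the heart of the identity. A secondary care point is to keep the convergence/$cv$ conditions aligned throughout the reduction so that the limit $x\to1^-$ is justified. (Alternatively one could recast the scalar identity as the statement that the generating series $\sum_{\vec\tau}I_{[0,x]}(\vec\tau)\,x_{\vec\tau}$ and the corresponding generating series of the $\Li$'s, after $\bfq$ is applied, satisfy the same first-order differential equation in $x$ with the same value at $x=0$ — avoiding explicit resummation at the cost of the one-variable holonomy formalism.)
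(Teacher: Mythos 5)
Your proposal is correct and follows essentially the same route as the paper: the paper dismisses this as a ``straight-forward computation'' and records in Remark~\ref{rem:bfq} exactly the scalar identity $I_{[0,1]}(\{0\}^{s_1-1},\gs_1,\dots,\{0\}^{s_r-1},\gs_r)=\Li_{s_1,\dots,s_r}(\gs_1,\gs_2\gs_1^{-1},\dots,\gs_r\gs_{r-1}^{-1})$ to which you reduce the statement. Your coefficient-matching via the unique $\bfq$-preimage $\nu_j=\gs_1\cdots\gs_j$ and your weight induction (peeling off $dt/t$ versus $w_1\,dt/(1-w_1t)$, with the reindexing $k_1=m+k_2$) supply the details the paper omits, and they are carried out correctly.
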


\begin{rem}  \label{rem:bfq}
The map $\bfq$ is added due to the formula $$I_{[0, 1]}(\{0\}^{s_1-1}, \gs_1, \cdots, \{0\}^{s_r-1}\gs_r)= Li_{s_1,\cdots, s_r}(\gs_1, \gs_2\gs_1^{-1}, \cdots, \gs_r\gs_{r-1}^{-1}).$$
\end{rem}

Racinet proves the following fundamental result (see \cite[Corollary~2.4.4]{Racinet2002}) 
by Poincar\'{e}-Birkhoff-Witt Theorem.

\begin{thm}\label{thm:diagonalElement}
Any diagonal series $\Phi_{cv}$ of $(\C\langle\!\langle Y\rangle\!\rangle_{cv}, \Delta_{\ast})$ is the image of a diagonal element of $(\C\langle\!\langle Y \rangle\!\rangle, \Delta).$ Two such elements $\Phi_1$ and $\Phi_2$ are related by $$\Phi_2= \exp((\lambda_2-\lambda_1)y_{1,1})\Phi_1,$$ where $\lambda_1$ and $\lambda_2$ are the coefficients of $y_{1,1}$ in $\Phi_1$ and $\Phi_2$ respectively.
\end{thm}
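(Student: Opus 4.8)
The plan is to follow Racinet's argument and exploit the Hopf-algebra structure of $\mathcal H:=(\C\langle\!\langle Y\rangle\!\rangle,\cdot,\Delta_\ast)$ via Poincar\'e--Birkhoff--Witt. First I would record the formal facts. Grading by weight ($y_{n,\nu}$ in weight $n$), $\mathcal H$ is connected with finite-dimensional homogeneous components, and it is cocommutative: $\tau\circ\Delta_\ast$ and $\Delta_\ast$ are algebra homomorphisms agreeing on the generators $y_{n,\nu}$ (the identity is clear since $\gG$ is abelian), hence agree. For any such complete connected cocommutative Hopf algebra one has: $\exp$ of a primitive series is diagonal, and $\log$ of a diagonal series $h$ is primitive --- since $h\otimes1$ and $1\otimes h$ commute, $\Delta_\ast\log h=\log\!\big((h\otimes1)(1\otimes h)\big)=(\log h)\otimes1+1\otimes(\log h)$. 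Next, $\Delta_\ast(y_{1,1})=1\otimes y_{1,1}+y_{1,1}\otimes1$, so $y_{1,1}$ is primitive; consequently the left ideal $\ker\pi_{cv}=y_{1,1}\C\langle\!\langle Y\rangle\!\rangle$ is a coideal: expanding $\Delta_\ast(y_{1,1}w)=(1\otimes y_{1,1}+y_{1,1}\otimes1)\Delta_\ast w$, every resulting term has a tensor factor in $y_{1,1}\C\langle\!\langle Y\rangle\!\rangle$. Hence $\C\langle\!\langle Y\rangle\!\rangle_{cv}$ is a quotient coalgebra, $\pi_{cv}$ is a coalgebra morphism, and $\pi_{cv}$ sends diagonal series to diagonal series, so the statement is meaningful.

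The crux of the formal part is the identity $\mathfrak p\cap\ker\pi_{cv}=\C\,y_{1,1}$, where $\mathfrak p$ is the (completed) Lie algebra of primitive series. Let $p\neq0$ be a homogeneous primitive in $y_{1,1}\C\langle\!\langle Y\rangle\!\rangle$ of weight $w$, and write $p=y_{1,1}^{k}r$ with $k\geq1$ maximal; then $r$ is homogeneous of weight $w-k$ with $\pi_{cv}(r)\neq0$. If $r$ is scalar, $p=\lambda y_{1,1}^{k}$ and $\bar\Delta_\ast(y_{1,1}^{k})=\sum_{0<j<k}\binom{k}{j}y_{1,1}^{j}\otimes y_{1,1}^{k-j}$ vanishes only when $k=1$, so $p\in\C y_{1,1}$. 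Otherwise $r\in\ker\epsilon$; applying $\pi_{cv}\otimes\id$ to $\bar\Delta_\ast p=0$, using $\Delta_\ast(y_{1,1}^{k})=\sum_j\binom{k}{j}y_{1,1}^{j}\otimes y_{1,1}^{k-j}$ and that $\pi_{cv}$ kills $y_{1,1}^{j}\C\langle\!\langle Y\rangle\!\rangle$ for $j\geq1$, only the $j=0$ contributions survive and one obtains
\begin{equation*}
\pi_{cv}(r)\otimes y_{1,1}^{k}+\big(\id\otimes(y_{1,1}^{k}\cdot)\big)\big((\pi_{cv}\otimes\id)\bar\Delta_\ast r\big)=0 .
\end{equation*}
As left multiplication by $y_{1,1}^{k}$ is injective and sends $1$ to $y_{1,1}^{k}$, this forces $\pi_{cv}(r)\otimes1=-(\pi_{cv}\otimes\id)\bar\Delta_\ast r$; but $\bar\Delta_\ast r\in\ker\epsilon\otimes\ker\epsilon$ has its right tensor factor of positive weight, so equating the components with right factor in weight $0$ yields $\pi_{cv}(r)=0$, a contradiction. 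Thus $\mathfrak p\cap\ker\pi_{cv}=\C y_{1,1}$.

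Granting the existence of a diagonal lift, the rest is formal. If $\Phi_1,\Phi_2$ are diagonal series with $\pi_{cv}\Phi_1=\pi_{cv}\Phi_2$, then $\Phi_2-\Phi_1\in\ker\pi_{cv}=y_{1,1}\C\langle\!\langle Y\rangle\!\rangle$, a left ideal, so $\Phi_2\Phi_1^{-1}-1=(\Phi_2-\Phi_1)\Phi_1^{-1}\in y_{1,1}\C\langle\!\langle Y\rangle\!\rangle$ ($\Phi_1=1+(\text{higher weight})$ is invertible). Hence $g:=\log(\Phi_2\Phi_1^{-1})$ again lies in the left ideal $y_{1,1}\C\langle\!\langle Y\rangle\!\rangle$, while $\Phi_2\Phi_1^{-1}$, being a product of diagonal series, is diagonal, so $g$ is primitive; by the previous paragraph $g=\lambda y_{1,1}$, i.e. $\Phi_2=\exp(\lambda y_{1,1})\Phi_1$. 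Comparing the coefficients of the word $y_{1,1}$ in $\Phi_2=\exp(\lambda y_{1,1})\Phi_1$ --- the weight-one part of the right side is $\lambda y_{1,1}$ plus the weight-one part of $\Phi_1$ --- gives $\lambda_2=\lambda+\lambda_1$, which is the asserted formula; conversely, for any diagonal $\Phi_1$ and any $\lambda$, $\exp(\lambda y_{1,1})\Phi_1$ is diagonal and, since $\exp(\lambda y_{1,1})-1\in y_{1,1}\C\langle\!\langle Y\rangle\!\rangle$, has the same $\pi_{cv}$-image, so this describes all diagonal lifts. There remains the existence: given diagonal $\Phi_{cv}$ with primitive logarithm $\ell_{cv}$, it would suffice to lift $\ell_{cv}$ to a primitive $\ell\in\mathfrak p$ reconstructing $\Phi_{cv}$; so the whole theorem reduces to the claim that $\pi_{cv}$ maps $\mathfrak p$ onto $\mathrm{Prim}\,\C\langle\!\langle Y\rangle\!\rangle_{cv}$ with kernel $\C y_{1,1}$, equivalently $\mathfrak p=\C y_{1,1}\oplus\mathfrak p'$ with $\pi_{cv}\colon\mathfrak p'\xrightarrow{\ \sim\ }\mathrm{Prim}\,\C\langle\!\langle Y\rangle\!\rangle_{cv}$.

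This last structural point is the main obstacle, and it is exactly where PBW is used in earnest. What one really wants is a coalgebra isomorphism $\C\langle\!\langle Y\rangle\!\rangle\simeq\C\langle\!\langle Y\rangle\!\rangle_{cv}\,\widehat{\otimes}\,\C[[y_{1,1}]]$ (second factor primitively generated) under which $\pi_{cv}$ becomes $\id\otimes\epsilon$; this identifies the diagonal series of $\C\langle\!\langle Y\rangle\!\rangle$ with pairs $(\Phi_{cv},\exp(\lambda y_{1,1}))$, giving at once the lift, its indeterminacy $\exp(\C y_{1,1})$, and the formula. I would establish it by passing through Hoffman's quasi-shuffle isomorphism: $(\C\langle\!\langle Y\rangle\!\rangle,\cdot,\Delta_\ast)$ is the completed graded dual of the quasi-shuffle bialgebra on $Y$, which is Hopf-isomorphic to the shuffle bialgebra, whose dual is $U(\mathfrak{Lib}_\C(Y))$ with the standard coproduct; since merging letters only raises the first index, this isomorphism respects the filtration ``by leading letter'' and so carries $y_{1,1}\C\langle\!\langle Y\rangle\!\rangle$ onto the corresponding left ideal, after which the splitting of primitives is the elementary statement that $\mathfrak{Lib}_\C(Y)$ admits a homogeneous Hall-type basis containing $y_{1,1}$. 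Verifying this compatibility with the $cv$-decomposition --- everything else being routine --- is the step I expect to require genuine care.
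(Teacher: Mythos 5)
First, note that the paper itself offers no proof of this statement: it is quoted from Racinet (his Corollary~2.4.4) with only the indication ``by Poincar\'e--Birkhoff--Witt'', so the comparison is really with Racinet's argument. Your uniqueness half is complete and correct, and in fact more self-contained than the citation suggests: the direct verification that a homogeneous primitive of $(\C\langle\!\langle Y\rangle\!\rangle,\Delta_{\ast})$ lying in $y_{1,1}\C\langle\!\langle Y\rangle\!\rangle$ must be a scalar multiple of $y_{1,1}$ --- apply $\pi_{cv}\otimes\id$ to the vanishing reduced coproduct, use injectivity of left multiplication by $y_{1,1}^{k}$, and compare the components whose right tensor factor has weight $0$ --- is sound and needs no PBW at all; combined with the $\exp/\log$ dictionary for group-like elements it yields $\Phi_2=\exp((\lambda_2-\lambda_1)y_{1,1})\Phi_1$ exactly as asserted. (A terminological quibble: $y_{1,1}\C\langle\!\langle Y\rangle\!\rangle$ is a \emph{right} ideal, and it is precisely its stability under right multiplication by $\Phi_1^{-1}$ that your argument uses, correctly.)

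The genuine gap is the existence of a diagonal lift, which you isolate honestly but do not close, and the route you sketch for closing it is the one place the write-up could actually go wrong. The detour through Hoffman's quasi-shuffle/shuffle isomorphism is unnecessary, and the compatibility you invoke (``merging letters only raises the first index, so the isomorphism respects the filtration by leading letter'') is not justified: on the word side Hoffman's map merges a leading $y_{1,1}$ with the following letter into some $y_{n+1,\nu}$, which does \emph{not} begin with $y_{1,1}$, so whether the relevant (dual) map preserves $y_{1,1}\C\langle\!\langle Y\rangle\!\rangle$ requires a separate argument you have not given. The repair is shorter than the detour and is what Racinet actually does: $(\C\langle Y\rangle,\cdot,\Delta_{\ast})$ is itself graded, connected and cocommutative, hence by Cartier--Milnor--Moore it already equals $U(\mathfrak{p})$ with $\mathfrak{p}$ its Lie algebra of primitives. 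Choose a homogeneous basis of $\mathfrak{p}$ containing $y_{1,1}$ and put $y_{1,1}$ first in the PBW ordering; the PBW monomials carrying a positive power of $y_{1,1}$ span exactly $y_{1,1}U(\mathfrak{p})=\ker\pi_{cv}$, while the remaining monomials span a subcoalgebra mapping isomorphically onto $\C\langle\!\langle Y\rangle\!\rangle_{cv}$. This produces the coalgebra splitting $\C\langle\!\langle Y\rangle\!\rangle\simeq\C[[y_{1,1}]]\,\widehat{\otimes}\,\C\langle\!\langle Y\rangle\!\rangle_{cv}$ you were after, hence the surjectivity of $\pi_{cv}$ on diagonal elements; it is the only ingredient missing from your proof.
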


With the above theorem available, he defines
\begin{defn}
Let $\calL$ (resp.\ $\calI$) be the unique diagonal element in $(\C\langle\!\langle Y\rangle\!\rangle, \Delta_{\ast})$ (resp.\ $(\C\langle\!\langle X \rangle\!\rangle, \Delta))$ such that $\pi_{cv}(\calL)= \calL_{cv}$ (resp.\ $\pi_{cv}(\calI)=\calI_{cv})$ and in which the coefficients of $y_{1,1}$ is 0 (resp.\ in which the coefficients of $x_0$ and $x_1$ are 0).
\end{defn}

\begin{rem}  
The coefficients of  $\calL$ (resp.\ $\calI$) may be viewed as regularized values of the series $Li_{s_1, \cdots, s_r}(\gs_1, \cdots, \gs_r)$ (resp.\ the iterated integral $I_{[0, 1]}(\gs_1, \cdots, \gs_r))$. Note that Racinet's choice is different from that given by Ihara, Kaneko, and Zagier in which the coefficient of $y_{1,1}$ is $T$ instead of 0.
\end{rem}

\begin{thm}\label{thm:relationCalLCalI}
The relation between $\calL$ and $\calI$  is given by
\begin{equation*}
\calL =\bfS(y_{1,1}) \cdot\bfq\pi_{Y}(\calI),
\end{equation*}
where $\bfS(u)= \displaystyle \exp{\left(\sum_{n=2}^{\infty}\frac{(-1)^{n-1}\zeta(n)}{n} u^n \right)}.$
\end{thm}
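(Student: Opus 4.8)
The plan is to leverage the two structural results already in hand: the identity $\calL_{cv}=\bfq(\calI_{cv})$ (the crucial proposition relating the two convergent generating series) and Racinet's uniqueness/comparison theorem (Theorem~\ref{thm:diagonalElement}) for diagonal series modulo the $y_{1,1}$-direction. First I would apply the projection $\pi_Y$ to the integral side. Since $\bfq$ is defined only on $k\langle Y\rangle$, one must first push $\calI\in\C\langle\!\langle X\rangle\!\rangle$ down to $\C\langle\!\langle Y\rangle\!\rangle$ via $\pi_Y$; the word-reindexing in Remark~\ref{rem:bfq} is exactly what converts an iterated-integral word $x_{\gs_1}\cdots x_{\gs_r}$ (not ending in $x_0$) into the $Y$-word $y_{s_1,\gs_1}y_{s_2,\gs_2\gs_1^{-1}}\cdots$, i.e.\ it realizes $\bfq\pi_Y$ as the combinatorial avatar of the substitution $z_j\mapsto \gs_j\gs_{j-1}^{-1}$. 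Thus $\bfq\pi_Y(\calI)$ is a well-defined element of $\C\langle\!\langle Y\rangle\!\rangle$, and one checks it is diagonal for $\Delta_\ast$: indeed $\pi_Y$ is a bialgebra map from $(\C\langle\!\langle X\rangle\!\rangle,\Delta)$ onto $(\C\langle\!\langle Y\rangle\!\rangle,\Delta_\ast)$ and $\bfq$ is a coalgebra automorphism for $\Delta_\ast$ (this last point being the one genuine computation, done word-by-word on the definition of $\bfq$ and $\Delta_\ast$).

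Next I would compare $\calL$ with $\bfq\pi_Y(\calI)$ using Theorem~\ref{thm:diagonalElement}. Both are diagonal elements of $(\C\langle\!\langle Y\rangle\!\rangle,\Delta_\ast)$, and after applying $\pi_{cv}$ they must agree: $\pi_{cv}\bfq\pi_Y(\calI)=\bfq\pi_{cv}(\calI)=\bfq(\calI_{cv})=\calL_{cv}=\pi_{cv}(\calL)$, where the first equality holds because $\bfq$ and $\pi_Y$ are compatible with the convergent-words projection. Hence by Theorem~\ref{thm:diagonalElement} the two differ exactly by $\exp\big((\lambda'-0)\,y_{1,1}\big)=\exp(\lambda' y_{1,1})$, where $\lambda'$ is the coefficient of $y_{1,1}$ in $\bfq\pi_Y(\calI)$ (recall the coefficient of $y_{1,1}$ in $\calL$ is normalized to $0$). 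So it remains only to identify $\lambda'$ and to see that $\exp(\lambda' y_{1,1})=\bfS(y_{1,1})$, i.e.\ that $\lambda'=\sum_{n\ge2}\frac{(-1)^{n-1}\zeta(n)}{n}$.

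Finally, to pin down $\lambda'$ I would extract the coefficient of the word $y_{1,1}$, equivalently of $x_1$, in $\bfq\pi_Y(\calI)$. This is a \emph{level-one} computation: $\bfq$ and $\pi_Y$ are identities on the letter $x_1=y_{1,1}$, so $\lambda'$ equals the coefficient of $x_1$ in $\calI$ itself; but $\calI$ is by definition normalized so that the coefficient of $x_1$ is $0$. That looks like it would give $\bfS=1$, which is wrong, so the subtlety — and the main obstacle — is that the map $\bfq\pi_Y$ is \emph{not} coefficient-preserving on $x_1$: reindexing can move weight from longer words into the one-letter slot only through the depth-one part, so $\lambda'$ is actually the $y_{1,1}$-coefficient picked up from the relation between the two \emph{different normalizations} ($\calI$ kills $x_0,x_1$; $\calL$ kills only $y_{1,1}$), and one must trace through how $\pi_Y$ (killing words ending in $x_0$) interacts with the shuffle-regularization of the divergent word $x_1$. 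Concretely, I would compare the depth-one parts: on the $\calI$-side the depth-one generating function is $\exp$ of the known series $\sum_{n\ge1}(-1)^{n-1}\zeta^{\sha}(n)x_1^{\,n}/\text{(normalization)}$ with $\zeta^{\sha}(1)$-contribution removed, and matching this against the depth-one part of the $\sha$-to-$\ast$ comparison for MZVs at level one recovers exactly $\bfS(u)=\exp\!\big(\sum_{n\ge2}\frac{(-1)^{n-1}\zeta(n)}{n}u^n\big)$ — this is precisely the classical Ihara--Kaneko--Zagier comparison factor, so the identification of $\lambda'$ reduces to citing \cite[Theorem~1]{IKZ2006} in depth one. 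Assembling these steps yields $\calL=\exp(\lambda' y_{1,1})\cdot\bfq\pi_Y(\calI)=\bfS(y_{1,1})\cdot\bfq\pi_Y(\calI)$, as claimed.
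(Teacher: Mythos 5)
The paper does not actually prove Theorem~\ref{thm:relationCalLCalI}: it is quoted from Racinet \cite{Racinet2002}, where it is established by an analytic comparison of the two regularizations. So your proposal must stand on its own, and it contains a fatal gap at its first step.

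You claim that $\bfq\pi_Y(\calI)$ is diagonal for $\Delta_\ast$, on the grounds that ``$\pi_Y$ is a bialgebra map from $(\C\langle\!\langle X\rangle\!\rangle,\Delta)$ onto $(\C\langle\!\langle Y\rangle\!\rangle,\Delta_\ast)$.'' This is false: $\Delta$ (letters primitive, dual to the shuffle product) and $\Delta_\ast$ (dual to the stuffle product) are genuinely different coalgebra structures, and $\pi_Y$ does not intertwine them; the failure of $\bfq\pi_Y(\calI)$ to be $\Delta_\ast$-group-like is exactly what the factor $\bfS(y_{1,1})$ measures. Indeed, granting your step 1, Theorem~\ref{thm:diagonalElement} applied to the two $\Delta_\ast$-diagonal elements $\calL$ and $\bfq\pi_Y(\calI)$ (which, as you correctly check, have the same image under $\pi_{cv}$) would give $\calL=\exp(\lambda y_{1,1})\cdot\bfq\pi_Y(\calI)$ for a \emph{scalar} $\lambda$, and no choice of $\lambda$ makes $\exp(\lambda y_{1,1})$ equal to $\bfS(y_{1,1})$: the former has $y_{1,1}$-coefficient $\lambda$ and $y_{1,1}^2$-coefficient $\lambda^2/2$, while $\bfS$ has $y_{1,1}$-coefficient $0$ and $y_{1,1}^2$-coefficient $-\zeta(2)/2\ne 0$. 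Worse, comparing $y_{1,1}$-coefficients forces $\lambda=0$: contrary to your third paragraph, $\bfq\pi_Y$ \emph{is} coefficient-preserving on the one-letter word $y_{1,1}=x_1$ (the projection $\pi_Y$ only kills words ending in $x_0$, and $\bfq$ fixes depth-one words), so the $y_{1,1}$-coefficient of $\bfq\pi_Y(\calI)$ equals the $x_1$-coefficient of $\calI$, namely $0$ by normalization. Your argument therefore proves $\calL=\bfq\pi_Y(\calI)$, i.e.\ $\bfS=1$, which is wrong. You sensed this contradiction but misdiagnosed it as a subtlety in extracting the $x_1$-coefficient; the real failure is the diagonality claim, and repairing it requires the genuinely nontrivial input of Racinet's (equivalently Ihara--Kaneko--Zagier's) comparison of the $\ast$- and $\sha$-regularizations, which produces the full series $\bfS(u)$ in the correction factor rather than a single scalar exponent.
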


Let's recall the linear map defined in \cite{IKZ2006}:
\begin{align*}
\rho: \R[T]&\to \R[T],\\
 \rho(e^{Tu})&\mapsto A(u)e^{Tu},
\end{align*}
where $A(u)=\bfS(u)^{-1}.$
Let $\calL':= \exp(Ty_{1,1})\cdot\calL$ and $\calI':=\exp(Tx_1)\cdot\calI$.
By Theorem~\ref{thm:diagonalElement}, the coefficients of $y_{1,1}$ and $x_1$ in $\calL'$ 
and $\calI'$ are both equal to $T$. Note that $y_{1,1}= x_1.$ We denote by 
$Li^{\ast}_{s_1, \cdots, s_r}(\gs_1, \cdots, \gs_r)$ 
(resp.\ $I^{\Sha}_{[0, 1]}(\gs_1, \cdots, \gs_r))$ the coefficient of 
$y_{s_1, \gs_1} \cdots y_{s_r, \gs_r}$ (resp.\ $x_{\gs_1} \cdots x_{\gs_r})$ 
in $\calL'$ (resp.\ $\calI'$). Finally we show that the above relation in the sense of 
Racinet is equivalent to that given in \cite{IKZ2006}.

\begin{thm}
For any indices $\mathbf{s}=(s_1, \cdots, s_r)$ and $\mathbf{\gs}= (\gs_1, \cdots, \gs_r)$, we have
\begin{equation*}
\rho(Li^{\ast}_{s_1, \cdots, s_r}(\gs_1, \gs_2\gs_1^{-1}, \cdots, \gs_r\gs_{r-1}^{-1} )) =
I^{\Sha}_{[0, 1]}(\{0\}^{s_1-1}, \gs_1, \cdots, \{0\}^{s_r-1}, \gs_r) .
\end{equation*}
\end{thm}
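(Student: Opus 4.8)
The plan is to translate both sides into statements about the diagonal series $\calL'$ and $\calI'$ and then invoke Theorem~\ref{thm:relationCalLCalI} together with the definition of $\rho$. By definition, $Li^\ast_{s_1,\dots,s_r}(\gs_1,\dots,\gs_r)$ is the coefficient of the word $y_{s_1,\gs_1}\cdots y_{s_r,\gs_r}$ in $\calL'=\exp(Ty_{1,1})\cdot\calL$, and $I^\Sha_{[0,1]}(\{0\}^{s_1-1},\gs_1,\dots,\{0\}^{s_r-1},\gs_r)$ is the coefficient of the word $x_0^{s_1-1}x_{\gs_1}\cdots x_0^{s_r-1}x_{\gs_r}$ in $\calI'=\exp(Tx_1)\cdot\calI$. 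First I would rewrite $\calI'$ in terms of $\calL'$. Starting from Theorem~\ref{thm:relationCalLCalI}, $\calL=\bfS(y_{1,1})\cdot\bfq\pi_Y(\calI)$, and since $y_{1,1}=x_1$ commutes with $\exp(Tx_1)$ and $\pi_Y,\bfq$ intertwine the relevant multiplications (both $\pi_Y$ and $\bfq$ fix the variable $x_1=y_{1,1}$), one gets
\begin{equation*}
\calL'=\exp(Ty_{1,1})\bfS(y_{1,1})\cdot\bfq\pi_Y(\calI)=\bfS(y_{1,1})\cdot\bfq\pi_Y\bigl(\exp(Tx_1)\calI\bigr)=\bfS(y_{1,1})\cdot\bfq\pi_Y(\calI').
\end{equation*}
Thus after passing to $\bfq\pi_Y$, the two regularized generating series differ exactly by left multiplication by $\bfS(y_{1,1})$.

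Next I would extract the coefficient of $y_{s_1,\gs_1}\cdots y_{s_r,\gs_r}$ on both sides. On the left this is $Li^\ast_{s_1,\dots,s_r}(\gs_1,\dots,\gs_r)$. On the right, Remark~\ref{rem:bfq} says $\pi_Y$ sends the word $x_0^{s_1-1}x_{\gs_1}\cdots x_0^{s_r-1}x_{\gs_r}$ (for $\gs_r\ne 0$; other words map to zero under $\pi_Y$) to $y_{s_1,\gs_1}\cdots y_{s_r,\gs_r}$, and then $\bfq$ sends $y_{s_1,\tau_1}\cdots y_{s_r,\tau_r}$ to $y_{s_1,\tau_1}y_{s_2,\tau_2\tau_1^{-1}}\cdots y_{s_r,\tau_r\tau_{r-1}^{-1}}$. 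So to land on the target word $y_{s_1,\gs_1}\cdots y_{s_r,\gs_r}$ after applying $\bfq$ we must start from $y_{s_1,\gs_1}y_{s_2,\gs_2\gs_1}\cdots y_{s_r,\gs_r\gs_{r-1}\cdots\gs_1}$, i.e. from the $I^\Sha$-word with colors $\gs_1,\gs_2\gs_1,\dots,\gs_r\cdots\gs_1$. Hence, accounting for the prefactor $\bfS(y_{1,1})$, the coefficient of $y_{s_1,\gs_1}\cdots y_{s_r,\gs_r}$ in $\bfS(y_{1,1})\cdot\bfq\pi_Y(\calI')$ equals a sum over $k\ge 0$ of (the degree-$k$ part of $\bfS$, which is a polynomial in $\zeta(2),\zeta(3),\dots$) times $I^\Sha_{[0,1]}$ of the word obtained by stripping $k$ copies of $y_{1,1}$ off the front—precisely the relation one would get by "factoring out the $x_1$-regularization." Renaming the colors $\tau_i=\gs_i\gs_{i-1}^{-1}$ (equivalently running the argument with $\gs_i$ replaced by $\gs_i\gs_{i-1}^{-1}$ everywhere), this reads
\begin{equation*}
Li^\ast_{s_1,\dots,s_r}(\gs_1,\gs_2\gs_1^{-1},\dots,\gs_r\gs_{r-1}^{-1})=\sum_{k\ge 0}\bfS_k\cdot I^\Sha_{[0,1]}\bigl(\{0\}^{?},\dots\bigr),
\end{equation*}
where $\bfS_k$ is the coefficient extracted from $\bfS(y_{1,1})$ and the inner integral words have their leading $x_1$'s removed.

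Finally I would recognize the right-hand side of the displayed identity as $\rho$ applied to the left-hand side. Recall $\rho$ is characterized by $\rho(e^{Tu})=A(u)e^{Tu}$ with $A(u)=\bfS(u)^{-1}$; equivalently, $\rho$ is the $\R$-linear map on $\R[T]$ whose effect on the $T$-degree is to multiply the exponential generating function by $A(u)$. The degree in $T$ of $Li^\ast_{s_1,\dots,s_r}$ counts exactly the number of leading index-one entries, i.e. exactly the $y_{1,1}$'s that can be pulled off, so the map "multiply by $\bfS(y_{1,1})$ on $\calL'$" is, coefficient by coefficient, the inverse operation: it is multiplication by $\bfS(u)=A(u)^{-1}$ in the $T$-variable followed by re-expansion as an $I^\Sha$-coefficient. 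Unwinding, applying $\rho$ (i.e. multiplying by $A(u)=\bfS(u)^{-1}$) to $Li^\ast_{s_1,\dots,s_r}(\gs_1,\gs_2\gs_1^{-1},\dots,\gs_r\gs_{r-1}^{-1})$ cancels the $\bfS(y_{1,1})$ factor and leaves exactly $I^\Sha_{[0,1]}(\{0\}^{s_1-1},\gs_1,\dots,\{0\}^{s_r-1},\gs_r)$, which is the claim. The main obstacle is the bookkeeping in the middle step: one must check carefully that $\pi_Y$ and $\bfq$ are algebra maps for the concatenation product in the precise way needed so that $\exp(Tx_1)$ may be moved past them, and that the combinatorics of stripping leading $x_1=y_{1,1}$ letters matches exactly the substitution $e^{Tu}\mapsto\bfS(u)e^{Tu}$ that defines $\rho^{-1}$; this is essentially the content of \cite[Theorem~1]{IKZ2006}, now transported to level $N$ via Racinet's framework, and the verification that Racinet's normalization (coefficient of $y_{1,1}$ equal to $0$) matches after the $\exp(Ty_{1,1})$ twist is where one must be most careful.
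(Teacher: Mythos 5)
Your proposal is correct and follows essentially the same route as the paper's proof: both start from Theorem~\ref{thm:relationCalLCalI}, commute $\exp(Tx_1)$ past $\bfq\pi_Y$ to get $A(y_{1,1})\cdot\calL'=\bfq\pi_Y(\calI')$ (you keep $\bfS(y_{1,1})$ on the right and cancel it via $\rho$, which is the same identity rearranged), identify the action of $\rho$ on coefficients with left multiplication by $A(y_{1,1})$, and finish by comparing coefficients of the word $y_{s_1,\gs_1}y_{s_2,\gs_2\gs_1^{-1}}\cdots y_{s_r,\gs_r\gs_{r-1}^{-1}}$ (your color relabeling $\tau_i=\gs_i\gs_{i-1}^{-1}$ is exactly this comparison). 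The extra care you take with the combinatorics of $\bfq$ and $\pi_Y$ is sound and only makes explicit what the paper leaves implicit.
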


\begin{proof}
Dividing the both sides of the equation in Theorem~\ref{thm:relationCalLCalI} by $\bfS(y_{1,1})$, 
we obtain 
\begin{align*}
A(y_{1,1})\cdot\calL'=\exp(Ty_{1,1})\cdot A(y_{1,1})\cdot \calL=&\exp(Ty_{1,1})\cdot \bfq \pi_{Y}(\calI)\\
=&\bfq \pi_{Y}(\exp(Tx_1)\calI)= \bfq \pi_{Y}(\calI').
\end{align*}
Hence 
\begin{equation*}
\rho(\calL')= \rho(\exp(Ty_{1,1})\cdot\calL)= A(y_{1,1})\cdot\exp(Ty_{1,1})\cdot\calL=\bfq \pi_{Y}(\calI'). \end{equation*}
The statement follows by comparing the coefficient of 
$y_{s_1,\gs_1}y_{s_1,\gs_2\gs_1^{-1}} \ldots y_{s_r,\gs_r\gs_{r-1}^{-1}} $ on
both sides of the equation.
\end{proof}

\section{Some generating functions}
For all $\ga\in\Z/N\Z$ put
\begin{equation*}
\frac{1}{\eta^{\ga} e^x-1}=\frac{\gd_{\ga,0}}x+\sum_{n=0}^{\infty} \frac{\om^{N}_{n;\ga} }{n!} x^n.
\end{equation*}
In this section, we will
provide explicit expressions of $\om$'s at lower levels. In the next few lemmas
we first present some facts that will be useful in our future computations.
\begin{lem}\label{lem:oppositega}
For all $n\ge 0$ and for all $N$ we have
\begin{equation*}
            \om^{N}_{n;0}=\frac{B_{n+1}}{n+1}.
\end{equation*}
For all $n\in\N$ and $\ga\in\Z/N\Z$ we have
\begin{equation}\label{equ:om(-1)^nom}
    \om^{N}_{n;\ga}=-(-1)^n  \om^{N}_{n;-\ga}.
\end{equation}
\end{lem}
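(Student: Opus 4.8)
The plan is to obtain both identities directly from the defining generating function by comparing Taylor coefficients, using only the classical Bernoulli expansion and one elementary algebraic manipulation; no deep input is needed.

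For the first identity I would specialize to $\ga=0$, so that $\eta^{0}=1$ and $\gd_{0,0}=1$, and the definition becomes
$$\frac{1}{e^{x}-1}=\frac{1}{x}+\sum_{n=0}^{\infty}\frac{\om^{N}_{n;0}}{n!}x^{n}.$$
Recalling the generating function $\dfrac{x}{e^{x}-1}=\sum_{n=0}^{\infty}\dfrac{B_{n}}{n!}x^{n}$ of the Bernoulli numbers, I divide by $x$ and reindex to get $\dfrac{1}{e^{x}-1}=\dfrac{1}{x}+\sum_{n=0}^{\infty}\dfrac{B_{n+1}}{(n+1)!}x^{n}$. Comparing the coefficients of $x^{n}$ yields $\om^{N}_{n;0}=\dfrac{B_{n+1}}{n+1}$ for every $n\ge 0$ and every $N$.

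For the second identity I would substitute $x\mapsto-x$ in the defining series, which gives
$$\frac{1}{\eta^{\ga}e^{-x}-1}=-\frac{\gd_{\ga,0}}{x}+\sum_{n=0}^{\infty}\frac{(-1)^{n}\om^{N}_{n;\ga}}{n!}x^{n},$$
and then rewrite the left-hand side by means of the elementary identity $\dfrac{1}{\eta^{\ga}e^{-x}-1}=-1-\dfrac{1}{\eta^{-\ga}e^{x}-1}$, obtained by multiplying numerator and denominator by $-\eta^{-\ga}e^{x}$. Since $\gd_{-\ga,0}=\gd_{\ga,0}$, expanding the right-hand side by the definition (with $-\ga$ in place of $\ga$) gives
$$-\frac{\gd_{\ga,0}}{x}+\sum_{n=0}^{\infty}\frac{(-1)^{n}\om^{N}_{n;\ga}}{n!}x^{n}=-1-\frac{\gd_{\ga,0}}{x}-\sum_{n=0}^{\infty}\frac{\om^{N}_{n;-\ga}}{n!}x^{n}.$$
The polar terms cancel, and comparing coefficients of $x^{n}$ for $n\ge 1$ gives $(-1)^{n}\om^{N}_{n;\ga}=-\om^{N}_{n;-\ga}$, equivalently \eqref{equ:om(-1)^nom} after multiplying through by $(-1)^{n}$.

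There is essentially no obstacle here; the whole argument is a coefficient comparison. The only points needing a little care are keeping track of the polar term $\gd_{\ga,0}/x$ under $x\mapsto-x$, and observing that restricting to $n\in\N$ (that is, $n\ge 1$) is what makes the statement uniform: the $x^{0}$ coefficients instead satisfy the separate relation $\om^{N}_{0;\ga}+\om^{N}_{0;-\ga}=-1$, which is consistent with $\om^{N}_{0;0}=B_{1}=-\tfrac12$ coming from the first identity.
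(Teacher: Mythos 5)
Your proof is correct and follows essentially the same route as the paper: the first identity is the Bernoulli generating function read off from the $\ga=0$ specialization, and your substitution $x\mapsto -x$ combined with $\frac{1}{\eta^{\ga}e^{-x}-1}=-1-\frac{1}{\eta^{-\ga}e^{x}-1}$ is just a rearrangement of the paper's computation $\frac{1}{\eta^{\ga}e^{x}-1}+\frac{1}{\eta^{-\ga}e^{-x}-1}=-1$. Your explicit remarks about the cancelling polar terms and the exceptional $n=0$ relation $\om^{N}_{0;\ga}+\om^{N}_{0;-\ga}=-1$ are points the paper leaves implicit, and they are handled correctly.
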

\begin{proof} The first equation is straightforward from the definition.
For the second, we have
\begin{equation*}
\sum_{n=0}^\infty \frac{ \om^{N}_{n;\ga}+(-1)^n  \om^{N}_{n;-\ga}}{n!} x^n=
\frac{1}{\eta^\ga e^x-1}+\frac{1}{\eta^{-\ga} e^{-x}-1}=\frac{1-\eta^\ga e^x}{\eta^\ga e^x-1}=-1.
\end{equation*}
The lemma follows immediately.
\end{proof}

\begin{lem}  \label{lem:omNgeneral}
For all $n\in\N$ and $\ga\in\Z/N\Z$
 $\om^N_{2n-1;\ga}$ is real and $\om^N_{2n;\ga}$ is pure imaginary.
They both lie in the cyclotomic field $\Q_N$. Further
\begin{equation}\label{equ:omNgeneral}
\sum_{\ga=1}^{N-1} \om^{N}_{n;\ga}=\frac{(N^{n+1}-1)B_{n+1}}{n+1}.
\end{equation}
\end{lem}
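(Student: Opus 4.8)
The plan is to extract the three assertions from the single generating function $1/(\eta^\ga e^x - 1)$ together with the relation \eqref{equ:om(-1)^nom} just established. First I would record the basic symmetry: complex conjugation sends $\eta^\ga$ to $\eta^{-\ga}$ and fixes $x$ (real variable), so $\overline{\om^N_{n;\ga}} = \om^N_{n;-\ga}$. Combining this with \eqref{equ:om(-1)^nom}, which says $\om^N_{n;-\ga} = -(-1)^n \om^N_{n;\ga}$, gives $\overline{\om^N_{n;\ga}} = -(-1)^n \om^N_{n;\ga} = (-1)^{n+1}\om^N_{n;\ga}$. Hence for odd $n$ the number $\om^N_{n;\ga}$ equals its own conjugate (real), and for even $n$ it equals minus its conjugate (pure imaginary), which is the first assertion. (One should note the edge case $n=0$ is excluded since we assume $n\in\N$, and in any case $\om^N_{0;0}=B_1/1=-1/2$ is real by Lemma~\ref{lem:oppositega}.)

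Second, membership in $\Q_N$: the expansion coefficients $\om^N_{n;\ga}$ are, by construction, obtained from $1/(\eta^\ga e^x-1) - \gd_{\ga,0}/x$ by taking iterated $x$-derivatives at $x=0$ (up to the factor $n!$). For $\ga\neq 0$ the function $1/(\eta^\ga e^x - 1)$ is analytic at $x=0$ with value $1/(\eta^\ga - 1)\in\Q_N$, and all its Taylor coefficients are polynomials with rational coefficients in $1/(\eta^\ga-1)$ and $\eta^\ga$, hence lie in $\Q_N$; for $\ga=0$ we already have $\om^N_{n;0}=B_{n+1}/(n+1)\in\Q\subset\Q_N$. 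So the second assertion is immediate.

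Third, the summation formula \eqref{equ:omNgeneral}. The idea I would use is the classical distribution (Kubert) identity for the function $1/(e^x-1)$: summing $1/(\eta^\ga e^x - 1)$ over all $N$-th roots of unity telescopes. Concretely, from $\prod_{\ga=0}^{N-1}(\eta^\ga e^x - 1) = e^{Nx}-1$ one gets, by logarithmic differentiation or directly by the partial-fraction identity $\sum_{\ga=0}^{N-1}\frac{1}{\eta^\ga z - 1} = \frac{N}{z^N-1}$ applied with $z=e^x$,
\begin{equation*}
\sum_{\ga=0}^{N-1}\frac{1}{\eta^\ga e^x - 1} = \frac{N}{e^{Nx}-1}.
\end{equation*}
Now substitute the defining expansions on both sides. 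On the left, the $\ga=0$ term contributes the singular part $1/x$ plus $\sum_n \om^N_{n;0}x^n/n!$, while the terms $\ga=1,\dots,N-1$ contribute $\sum_n\big(\sum_{\ga=1}^{N-1}\om^N_{n;\ga}\big)x^n/n!$. On the right, $N/(e^{Nx}-1) = 1/x + \sum_n \frac{N^{n+1}B_{n+1}}{(n+1)!}x^n$ by the generating function for Bernoulli numbers (with $Nx$ in place of $x$). Matching the coefficient of $x^n/n!$ for $n\in\N$ gives $\om^N_{n;0} + \sum_{\ga=1}^{N-1}\om^N_{n;\ga} = \frac{N^{n+1}B_{n+1}}{n+1}$, and subtracting $\om^N_{n;0}=B_{n+1}/(n+1)$ yields \eqref{equ:omNgeneral}. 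The only mildly delicate point—the main obstacle, such as it is—is bookkeeping the singular $1/x$ terms so they cancel correctly on both sides and confirming the Bernoulli-number normalization matches Lemma~\ref{lem:oppositega}; everything else is a routine comparison of Taylor coefficients.
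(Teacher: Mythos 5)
Your proposal is correct and follows essentially the same route as the paper: the reality/imaginarity claim comes from combining conjugation symmetry with the $x\mapsto -x$ identity of Lemma~\ref{lem:oppositega} (the paper packages these as $f_\ga(x)+\ol{f_\ga}(-x)=-1$), and the sum formula \eqref{equ:omNgeneral} comes from the same partial-fraction identity $\sum_{\ga=0}^{N-1}(\eta^\ga e^x-1)^{-1}=N/(e^{Nx}-1)$ followed by comparison of Taylor coefficients. Your explicit justification of the $\Q_N$-membership is a small bonus the paper leaves implicit.
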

\begin{proof}
Set
\begin{equation*}
    f_\ga(x)=\frac{1}{\eta^\ga e^x-1}.
\end{equation*}
Then
\begin{equation*}
    f_\ga(x)+\ol{f_\ga}(-x)=f_\ga(x)+f_{N-\ga}(-x)=\frac{1}{\eta^\ga e^x-1}-\frac{\eta^\ga e^x}{\eta^\ga e^x-1}=-1.
\end{equation*}
where $\ol{f_1}$ is the complex conjugation of $f_1$. Thus
$$\om^{N}_{2n-1;\ga}-\ol{\om^{N}_{2n-1;\ga}}=\om^{N}_{2n;\ga}+\ol{\om^{N}_{2n;\ga}}=0 \quad\forall n\ge 1.$$
This proves the first sentence of the lemma.
Observe that $F(t):=t^N-1=\prod_{j=0}^{N-1}(t-\eta^j)$. By the Leibniz rule
\begin{equation*}
 \sum_{j=0}^{N-1} \frac{t}{t-\eta^j }=\frac{tF'(t)}{F(t)}=\frac{Nt^N}{t^N-1}.
\end{equation*}
Using the substitution $t=e^{-x}$ we get
\begin{equation}\label{equ:Ngeneral}
\sum_{j=0}^{N-1} \frac{1}{\eta^j e^x-1}=  \frac{N}{e^{Nx}-1}.
\end{equation}
Now \eqref{equ:omNgeneral} quickly follows from this.
\end{proof}

\begin{cor}  \label{cor:omN=2}
Let $N=2$. Then for all $n\ge 0$ we have
\begin{equation}\label{equ:N=2ga=1}
\om^2_{n;1} =  \frac{(2^{n+1}-1) B_{n+1} }{n+1}.
\end{equation}
\end{cor}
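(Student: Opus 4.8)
The corollary is the specialization $N=2$ of Lemma~\ref{lem:omNgeneral}: when $N=2$ the only nonzero residue class is $\ga=1$, so the single sum $\sum_{\ga=1}^{N-1}\om^N_{n;\ga}$ collapses to the one term $\om^2_{n;1}$, and \eqref{equ:omNgeneral} reads exactly $\om^2_{n;1}=(2^{n+1}-1)B_{n+1}/(n+1)$. So the entire content of the proof is this observation, plus a sanity check on the small cases.

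**Key steps, in order.** First I would note that $(\Z/2\Z)\setminus\{0\}=\{1\}$, so the left-hand side of \eqref{equ:omNgeneral} with $N=2$ has exactly one summand, namely $\om^2_{n;1}$. Second, I would substitute $N=2$ into the right-hand side of \eqref{equ:omNgeneral} to get $(2^{n+1}-1)B_{n+1}/(n+1)$, which yields \eqref{equ:N=2ga=1} for all $n\ge 1$ immediately. Third, I would dispose of the boundary case $n=0$ separately, since Lemma~\ref{lem:omNgeneral} is stated for $n\in\N$: when $n=0$ the right side is $(2^1-1)B_1/1=B_1=-\tfrac12$, and from the defining generating function $\frac1{e^x-1}=\frac1x-\frac12+\cdots$ (the $\ga=0$, $N=2$ case, where we subtract the pole $\gd_{\ga,0}/x$) one reads off $\om^2_{0;1}$ directly via \eqref{equ:Ngeneral} with $N=2$, namely $\frac1{e^x-1}+\frac1{-e^{-x}-1}$ wait—more cleanly, use \eqref{equ:Ngeneral}: $\frac{1}{e^x-1}+\frac{1}{e^xe^{i\pi}-1}=\frac{1}{e^x-1}-\frac{1}{e^x+1}=\frac{2}{e^{2x}-1}$, and comparing the constant terms gives $-\tfrac12+\om^2_{0;1}=2\cdot(-\tfrac14)$ hmm, let me just say: comparing constant terms in \eqref{equ:Ngeneral} for $N=2$ gives $\om^2_{0;1}=-\tfrac12$, matching the claimed formula at $n=0$.

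**Main obstacle.** There is essentially no obstacle: the result is a direct corollary, so the only thing to be careful about is the indexing convention, i.e. whether the statement for $n\ge 0$ (as written in the corollary) is covered by Lemma~\ref{lem:omNgeneral}'s hypothesis $n\in\N$, which is why I would spell out the $n=0$ case explicitly as above. One should also double-check the sign and normalization of $B_1$ being used in the paper (the formula forces $B_1=-1/2$), but this is consistent with the convention implicit in Lemma~\ref{lem:oppositega}, where $\om^N_{n;0}=B_{n+1}/(n+1)$ together with $\frac{1}{e^{Nx}-1}=\frac1{Nx}-\frac12+\cdots$ again pins down $B_1=-1/2$. So the proof is two lines: collapse the sum, substitute $N=2$, and note the $n=0$ case by hand.
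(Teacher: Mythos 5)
Your proof is correct and is essentially the paper's own argument: the paper's proof of Corollary~\ref{cor:omN=2} consists of the single remark that it follows directly from \eqref{equ:omNgeneral} with $N=2$, which is exactly your collapse of the one-term sum. Your extra verification of the $n=0$ case is a reasonable precaution about the range of $n$ in Lemma~\ref{lem:omNgeneral}, but note that the derivation of \eqref{equ:omNgeneral} from \eqref{equ:Ngeneral} already covers $n=0$, so this adds care rather than new content.
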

\begin{proof}
This directly follows from \eqref{equ:omNgeneral} when $N=2$.
\end{proof}

\begin{lem}  \label{lem:omN=3}
Let $N=3$ and $\eta=e^{2\pi \sqrt{-1}/3}$. Then $\om^3_{0;1}=(\eta^2-1)/3$ and for all $n\ge 1$
\begin{align}
 \om^3_{2n-1;1}=\om^3_{2n-1;2}=&\, \frac{(3^{2n}-1) B_{2n} }{4n}, \label{equ:N=3ga=1odd}\\
 \om^3_{2n;1}=-\om^3_{2n;2}=&\,-\frac{\sqrt{-3}}{6(2n+1)} \sum_{j=0}^{2n} 3^{2n-j}(2^{j+1}-1) \binom{2n+1}{2n-j}B_{2n-j}.   \label{equ:N=3ga=1even}
\end{align}
\end{lem}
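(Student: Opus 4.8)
The plan is to work directly from the defining generating function $f_\ga(x)=1/(\eta^\ga e^x-1)$ at $N=3$, using the three specialized tools already in hand: the reflection formula \eqref{equ:om(-1)^nom}, the summation identity \eqref{equ:omNgeneral}, and the explicit distribution-type relation \eqref{equ:Ngeneral}. First I would establish the ``even versus odd'' dichotomy between $\om^3_{n;1}$ and $\om^3_{n;2}$. Since $2\equiv-1\ppmod 3$, formula \eqref{equ:om(-1)^nom} with $\ga=1$ gives $\om^3_{n;1}=-(-1)^n\om^3_{n;2}$, so $\om^3_{2n-1;1}=\om^3_{2n-1;2}$ and $\om^3_{2n;1}=-\om^3_{2n;2}$, which is exactly the shape asserted in \eqref{equ:N=3ga=1odd} and \eqref{equ:N=3ga=1even}. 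The value $\om^3_{0;1}=(\eta^2-1)/3$ is a direct Laurent-expansion computation: near $x=0$, $1/(\eta e^x-1)=1/(\eta-1)+O(x)$ and $1/(\eta-1)=\ol{\eta-1}/|\eta-1|^2=(\eta^2-1)/3$ since $|\eta-1|^2=3$ and $\ol\eta=\eta^2$.

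Next I would pin down the odd coefficients. Summing the dichotomy relation, \eqref{equ:omNgeneral} at $N=3$ reads $\om^3_{n;1}+\om^3_{n;2}=(3^{n+1}-1)B_{n+1}/(n+1)$; combined with $\om^3_{2n-1;1}=\om^3_{2n-1;2}$ this immediately yields $2\om^3_{2n-1;1}=(3^{2n}-1)B_{2n}/(2n)$, i.e. \eqref{equ:N=3ga=1odd}. For the even coefficients I would not use \eqref{equ:omNgeneral} (which only gives $\om^3_{2n;1}+\om^3_{2n;2}=0$, already known), but instead extract $\om^3_{2n;1}-\om^3_{2n;2}=2\om^3_{2n;1}$ from a second, independent identity. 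The natural source is to isolate the two primitive terms in \eqref{equ:Ngeneral}: writing \eqref{equ:Ngeneral} at $N=3$ as
\begin{equation*}
\frac{1}{\eta e^x-1}+\frac{1}{\eta^2 e^x-1}=\frac{3}{e^{3x}-1}-\frac1{e^x-1},
\end{equation*}
and then forming the ``antisymmetric'' combination by also using the $N=2$-type relation $1/(\eta e^x-1)-1/(\eta^2 e^x-1)$. To get a closed form for the latter I would combine $1/(\eta e^x-1)+1/(\eta^2 e^x-1)+1/(e^x-1)=3/(e^{3x}-1)$ with the factorization $(\eta e^x-1)(\eta^2e^x-1)=e^{2x}-(\eta+\eta^2)e^x+1=e^{2x}+e^x+1=(e^{3x}-1)/(e^x-1)$, so that the product of the two conjugate denominators is elementary; expressing $1/(\eta e^x-1)-1/(\eta^2 e^x-1)=(\eta^2-\eta)e^x/\big((e^{3x}-1)/(e^x-1)\big)=\sqrt{-3}\,e^x(e^x-1)/(e^{3x}-1)$, using $\eta^2-\eta=-\sqrt{-3}$ (with the branch $\eta=e^{2\pi\sqrt{-1}/3}$).

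From there the even coefficients fall out of a generating-function manipulation: $2\om^3_{2n;1}$ is, up to the factorials, the coefficient of $x^{2n}$ in $\sqrt{-3}\,e^x(e^x-1)/(e^{3x}-1)$. I would rewrite this kernel as $\sqrt{-3}/3 \cdot \big(3x/(e^{3x}-1)\big)\cdot (e^{2x}-e^x)/x$, recognize $3x/(e^{3x}-1)=\sum_j 3^j B_j x^j/j!$ and $(e^{2x}-e^x)/x=\sum_j (2^{j+1}-1)x^j/(j+1)!$ (shifting index, since $(2^{j+1}-1)-$type coefficients of $e^{2x}-e^x$ are $2^{j+1}-1$ at $x^{j+1}$), and then read off the Cauchy product; after matching the $1/(2n+1)$ and $\binom{2n+1}{2n-j}$ bookkeeping this is precisely \eqref{equ:N=3ga=1even}. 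The one point needing care — and the step I expect to be the main obstacle — is getting all the index shifts and binomial normalizations in the Cauchy product exactly right so that the prefactor comes out as $-\sqrt{-3}/(6(2n+1))$ with the stated binomial $\binom{2n+1}{2n-j}$; the sign in particular depends on whether one writes the antisymmetric combination as $f_1-f_2$ or $f_2-f_1$ and on the chosen branch of $\sqrt{-3}$, and I would fix these conventions explicitly at the start and then let Lemma~\ref{lem:omNgeneral}'s reality/purity statement ($\om^3_{2n;1}$ pure imaginary) serve as a consistency check on the final sign.
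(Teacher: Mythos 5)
Your proposal is correct and follows essentially the same route as the paper: the odd case from \eqref{equ:omNgeneral} combined with the symmetry $\om^3_{2n-1;1}=\om^3_{2n-1;2}$ (you get this from \eqref{equ:om(-1)^nom}, the paper from the reality/purity statement of Lemma~\ref{lem:omNgeneral} — equivalent), and the even case by reducing $f_1(x)-f_2(x)$ to $-\sqrt{-3}\,(e^{2x}-e^x)/(e^{3x}-1)$ via the factorization $(\eta e^x-1)(\eta^2e^x-1)=e^{2x}+e^x+1$ and then taking the Cauchy product of $\frac{3x}{e^{3x}-1}$ with $\frac{e^{2x}-e^x}{x}$. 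The only blemish is the dropped minus sign in your displayed kernel (with $\eta^2-\eta=-\sqrt{-3}$ it must read $-\sqrt{-3}\,e^x(e^x-1)/(e^{3x}-1)$), which you already flag and which your purity consistency check correctly resolves.
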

\begin{proof}
Clearly
 $$\om^3_{0;1}=\frac{1}{\eta-1}=\frac{\eta^2-1}{3}.$$
By \lemref{lem:omNgeneral} we see that $\om^3_{2n-1;1}$ are real and
$\om^3_{2n;1}$ are pure imaginary for all $n\ge 1.$
Further, by \eqref{equ:omNgeneral}
\begin{equation*}
\om^{N}_{n;1}+\ol{\om^{N}_{n;1}}=\om^{N}_{n;1}+\om^{N}_{n;2}=\frac{(3^{n+1}-1)B_{n+1}}{n+1}.
\end{equation*}
Hence we have
$$2 \om^3_{2n-1;1} =\frac{(3^{2n}-1) B_{2n} }{2n}$$
which quickly yields \eqref{equ:N=3ga=1odd}. On the other hand
\begin{align*}
 f_1(x)-f_2(x)
&= -\frac{\sqrt{-3}e^x}{e^{2x}+e^x+1}= -\frac{\sqrt{-3}(e^{2x}-e^{x})}{e^{3x}-1}\\
&= -\frac{\sqrt{-3}}{3} \sum_{n=0}^{\infty} \frac{2^{n+1}-1}{(n+1)!} x^n \cdot \sum_{m=0}^{\infty} \frac{3^mB_m}{m!}x^m.
\end{align*}
Hence \eqref{equ:N=3ga=1even} follows immediately.
\end{proof}

\begin{lem}  \label{lem:omN=4}
Let $N=4$ and $n\ge 0$. Then $\om^4_{n;2}= \om^{2}_{n;1}$ is given by \eqref{equ:N=2ga=1}.
Further we have  $\om^4_{n;3}=\ol{\om^4_{n;1}}$ where
\begin{equation}\label{equ:N=4ga=1}
\om^4_{n;1} =
\left\{
  \begin{array}{ll}
  {\displaystyle \frac{-1-\sqrt{-1}}2, } & \hbox{if $n=0$;} \\
 {\displaystyle \frac{2^n(2^{n+1}-1) B_{n+1} }{n+1}, \phantom{\frac{\Big|}{\Big|}}} & \hbox{if $n$ is odd;} \\
    {\displaystyle -\frac{\sqrt{-1}}{2} E_n,} & \hbox{if $n\ge 2$ is even,}
  \end{array}
\right.
\end{equation}
where $E_n$ are Euler numbers defined by the generating function
\begin{equation*}
 {\rm sech}\, x=\frac{2}{e^{x}+e^{-x}}=\sum_{n=0}^\infty \frac{E_n}{n!} x^n.
\end{equation*}
\end{lem}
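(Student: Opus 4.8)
The plan is to exploit the arithmetic relations among the $4$-th roots of unity together with the already established general facts, especially \lemref{lem:omNgeneral} and \eqref{equ:Ngeneral}. First I would handle $\om^4_{n;2}$: since $\eta^2 = e^{\pi\sqrt{-1}} = -1$ when $\eta = e^{2\pi\sqrt{-1}/4}$, the generating function $1/(\eta^2 e^x - 1) = 1/(-e^x-1)$ — wait, more carefully, with $N=4$ the relevant exponential is $1/(\eta^2 e^x - 1) = -1/(e^x+1)$, and comparing with the $N=2$ generating function $1/(\eta_2 e^x - 1) = 1/(-e^x - 1) = -1/(e^x+1)$ shows the two power series coincide term by term, so $\om^4_{n;2} = \om^2_{n;1}$, which is \eqref{equ:N=2ga=1} by \corref{cor:omN=2}. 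The conjugation statement $\om^4_{n;3} = \ol{\om^4_{n;1}}$ is immediate from the fact that $\eta^3 = \ol{\eta}$ and the generating function has real Taylor coefficients in the sense that conjugating $x\mapsto x$ real sends $f_1$ to $f_3$, exactly as in the proof of \lemref{lem:omNgeneral}.

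The bulk of the work is the formula \eqref{equ:N=4ga=1} for $\om^4_{n;1}$. The $n=0$ value is just $1/(\eta - 1) = 1/(\sqrt{-1}-1) = (-1-\sqrt{-1})/2$ by rationalizing. For the odd case, I would use \eqref{equ:omNgeneral} with $N=4$: summing $\om^4_{n;1} + \om^4_{n;2} + \om^4_{n;3} = (4^{n+1}-1)B_{n+1}/(n+1)$. By \lemref{lem:omNgeneral}, for odd $n = 2m-1$ all three terms are real, and $\om^4_{n;1} = \om^4_{n;3}$ because $\om^4_{n;3} = \ol{\om^4_{n;1}}$ is real; also I can substitute the known value of $\om^4_{n;2} = \om^2_{n;1} = (2^{n+1}-1)B_{n+1}/(n+1)$. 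This gives $2\om^4_{n;1} = [(4^{n+1}-1) - (2^{n+1}-1)]B_{n+1}/(n+1) = 2^{n+1}(2^{n+1}-1)B_{n+1}/(n+1)$, hence $\om^4_{n;1} = 2^n(2^{n+1}-1)B_{n+1}/(n+1)$, as claimed.

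For the even case $n\ge 2$, the key is to isolate the imaginary part. Following the pattern of the proof of \lemref{lem:omN=3}, I would compute $f_1(x) - f_3(x) = f_1(x) - \ol{f_1}(x) = 2\sqrt{-1}\,\im f_1(x)$ directly:
\begin{equation*}
f_1(x) - f_3(x) = \frac{1}{\sqrt{-1}e^x - 1} - \frac{1}{-\sqrt{-1}e^x - 1} = \frac{-2\sqrt{-1}e^x}{e^{2x}+1} = \frac{-\sqrt{-1}}{\cosh x}\cdot\frac{2e^x}{2\cdot \tfrac{e^{2x}+1}{e^x}}\, ,
\end{equation*}
which simplifies (after rewriting $2e^x/(e^{2x}+1) = 1/\cosh x = \operatorname{sech} x$) to $f_1(x) - f_3(x) = -\sqrt{-1}\,\operatorname{sech} x$. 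Since $\operatorname{sech} x = \sum E_n x^n/n!$ is even, comparing coefficients of $x^n/n!$ gives $\om^4_{n;1} - \om^4_{n;3} = -\sqrt{-1}E_n$ for $n$ even, and since $\om^4_{n;3} = \ol{\om^4_{n;1}} = -\om^4_{n;1}$ for even $n$ (as $\om^4_{n;1}$ is pure imaginary by \lemref{lem:omNgeneral}), we get $2\om^4_{n;1} = -\sqrt{-1}E_n$, i.e. $\om^4_{n;1} = -\tfrac{\sqrt{-1}}{2}E_n$. I expect the only real pitfall to be bookkeeping with signs and the normalization of $\operatorname{sech} x$ versus $2e^x/(e^{2x}+1)$; everything else is a direct specialization of the general lemmas already in hand.
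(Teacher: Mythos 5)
Your proposal is correct and follows essentially the same route as the paper: the $n=0$ case by direct evaluation, the conjugation/reality facts from Lemma~\ref{lem:omNgeneral}, and the even case via $f_1(x)-f_3(x)=-\sqrt{-1}\,\operatorname{sech}x$. Your treatment of the odd case (subtracting the known $\om^4_{n;2}$ from the sum in \eqref{equ:omNgeneral}) is just a repackaging of the paper's partial-fraction identity $f_1(x)+f_3(x)=\frac{4}{e^{4x}-1}-\frac{2}{e^{2x}-1}$, since $(4^{n+1}-1)-(2^{n+1}-1)=2^{n+1}(2^{n+1}-1)$.
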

\begin{proof}
The relation $\om^4_{n;2}= \om^{2}_{n;1}$ is trivial. Let $\eta=\sqrt{-1}$. Clearly
 $$\om^4_{0;1}=\frac{1}{\eta-1}= \frac{-1-\sqrt{-1}}2.$$
By \lemref{lem:omNgeneral} we see that $\om^4_{2n-1;1}$ are real and $\om^4_{2n;1}$
are pure imaginary for all $n\ge 1.$ Further
\begin{multline*}
   f_1(x)+\ol{f_1}(x) = f_1(x)+f_3(x)
=\frac{-2}{e^{2x}+1}\\
=\frac{4}{e^{4x}-1}-\frac{2}{e^{2x}-1}
=\sum_{n=0}^\infty \frac{2^{n+1}(2^{n+1}-1)B_{n+1}}{(n+1)!} x^n.
\end{multline*}
This proves the odd $n$ case in \eqref{equ:N=4ga=1}. On the other hand
\begin{equation*}
 f_1(x)-\ol{f_1}(x) =  f_1(x)-f_3(x) =\frac{-2\eta e^{x}}{e^{2x}+1}
   =\frac{-2\eta}{e^{x}+e^{-x}}=-\eta\sum_{n=0}^\infty \frac{E_n}{n!} x^n.
\end{equation*}
This proves
the even $n$ case in \eqref{equ:N=4ga=1}.
\end{proof}

\section{Multiple divisor functions}\label{sec:MDFs}
Recall that $\eta=\exp(2\pi \sqrt{-1}/N)$ is the $N$-th root of unity. Recall
$\Q_N=\Q(\eta_N)$ is the $N$-th cyclotomic field.
As a generalization of the classical divisor sums we define for integers
$s_1,\dots,s_d\ge 0$ and $\ga_1,\dots,\ga_d\in\Z/N\Z$
the \emph{multiple divisor sum at level $N$} by
\begin{equation} \label{def:sigma}
\gs_{s_1,\dots,s_d}^{\ga_1,\dots,\ga_d}(n) = \sum_{\substack{u_1 v_1 + \cdots + u_d v_d = n\\
u_1 > \dots > u_d>0}} \eta^{\ga_1v_1 + \cdots+ \ga_d v_d} v_1^{s_1} \dots v_d^{s_d}.
\end{equation}
In general, these sums are complex numbers lying in $\Q_N$.

Suppose $\bfs=(s_1,\dots,s_d)\in\N^d$ and $\bfga=(\ga_1,\dots,\ga_d)\in(\Z/N\Z)^d$. We
set $\bfs-\bfone=(s_1-1,\dots,s_d-1)$. Then the \emph{multiple divisor function} at level $N$
is the generating $q$-series of the multiple divisor sum $\gs_{\bfs}^{\bfga}$
given by
\begin{equation*}
 [\bfs;\bfga]_N(q) := \frac{1}{(s_1-1)! \dots (s_d-1)!} \
\sum_{n>0} \gs_{\bfs-\bfone}^{\bfga}(n) q^n \,\,\in \Q_N[\![q]\!].
\end{equation*}
Here and in what follows we will simply write $[\bfs;\bfga]_N$ or even $[\bfs;\bfga]$ instead of
$[\bfs;\bfga]_N(q)$. Similar to MZVs we call
$|\bfs|:=s_1+\dots+s_d$ the \emph{weight} and $\dep(\bfs):=d$ the \emph{depth}.
At level $N=2$ we use the special notation by putting the
letter o (for ``odd'') on top of $s_j$ if and only if the corresponding color $\ga_j=1$ is odd.

\begin{eg} We give a few examples at level $N=2$:
\begin{align*}
[\baro{2}]& =[2;1]_2 = -q+q^2-4q^3+5q^4-6q^5+4q^6-8q^7+13q^8+ \dots, \\
[2,\baro{1}] & =[2,1;0,1]_2 =-q^3-4q^5+q^6-9q^7+4q^8-17q^9+8q^{10}-25q^{11} ,
\end{align*}
\end{eg}

Recall that in \cite{BachmannKu2013} a \emph{normalized polylogarithm} is defined by
\begin{equation*}
\tLi_s(z) := \frac{ \Li_{1-s}(z) }{\gG(s)},
\end{equation*}
where for $s\in\N$, $z\in\C$, $|z|<1$ the polylogarithm $\Li_s(z)=\sum_{n\ge 1} \frac{z^n}{n^s}$
of weight $s$. By \cite{Foata2010} we see that $\tLi_s(z)$ is a rational function in
$z$ and is holomorphic in $|z|<1$.

\begin{prop}\label{prop:ExpressMDFbyLi}
For $q \in \C$ with   $|q|<1$ and for all $s_1,\dots,s_d \in \N$
we can write the MDFs as
\begin{equation*} [s_1,\dots ,s_d;\ga_1,\dots,\ga_d] = \sum_{n_1 > \dots > n_d>0} \tLi_{s_1}\left(\eta^{\ga_1}q^{n_1}\right) \cdots \tLi_{s_d}\left(\eta^{\ga_d} q^{n_d}\right). \end{equation*}
\end{prop}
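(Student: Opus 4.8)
The plan is to expand each normalized polylogarithm $\tLi_{s_j}(z)$ into its defining power series and then reorganize the resulting multiple sum so that it matches the definition of the multiple divisor function. First I would recall that by definition $\tLi_{s}(z) = \Li_{1-s}(z)/\gG(s) = \frac{1}{(s-1)!}\sum_{v\ge 1} v^{s-1} z^v$ for $s\in\N$, which converges absolutely for $|z|<1$ (the rationality cited from \cite{Foata2010} is not strictly needed for the formal identity, only for the analytic statement that everything makes sense on $|q|<1$). Substituting $z = \eta^{\ga_j} q^{n_j}$ gives $\tLi_{s_j}(\eta^{\ga_j}q^{n_j}) = \frac{1}{(s_j-1)!}\sum_{v_j\ge 1} v_j^{s_j-1}\eta^{\ga_j v_j} q^{n_j v_j}$.

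Next I would form the product over $j=1,\dots,d$ and interchange the (absolutely convergent, for $|q|<1$) summations, obtaining
\begin{equation*}
\sum_{n_1>\dots>n_d>0}\ \prod_{j=1}^d \tLi_{s_j}(\eta^{\ga_j}q^{n_j})
= \frac{1}{(s_1-1)!\cdots(s_d-1)!}\sum_{\substack{n_1>\dots>n_d>0\\ v_1,\dots,v_d\ge 1}} \Big(\prod_{j=1}^d v_j^{s_j-1}\eta^{\ga_j v_j}\Big) q^{n_1v_1+\cdots+n_dv_d}.
\end{equation*}
Then I would collect terms by the total exponent $n := n_1v_1+\cdots+n_dv_d$: renaming $u_j=n_j$, the coefficient of $q^n$ is exactly $\frac{1}{(s_1-1)!\cdots(s_d-1)!}\sum_{u_1v_1+\cdots+u_dv_d=n,\ u_1>\cdots>u_d>0} \prod_j \eta^{\ga_jv_j}v_j^{s_j-1}$, which is $\frac{1}{(s_1-1)!\cdots(s_d-1)!}\gs_{\bfs-\bfone}^{\bfga}(n)$ by \eqref{def:sigma}. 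Summing over $n>0$ recovers $[\bfs;\bfga]_N(q)$ by the definition of the multiple divisor function.

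The only real obstacle is justifying the rearrangement of the multiple series; I would handle this by noting that for $|q|<1$ all terms with the monomial $q^n$ have $n\ge \binom{d+1}{2}$ (since $u_1>\cdots>u_d>0$ forces $n_1v_1+\cdots+n_dv_d \ge d + (d-1) + \cdots + 1$) and that for each fixed $n$ only finitely many tuples $(u_j,v_j)$ contribute, while $|\eta^{\ga_j v_j}|=1$ so the coefficient of $q^n$ is a finite sum; hence the rearranged series is a genuine power series in $q$ with the stated coefficients, and absolute convergence on $|q|<1$ follows from comparison with $\sum_n \gs_{|\bfs|-d}(n)|q|^n$ where $\gs$ is an ordinary (majorizing) divisor sum. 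This makes the formal manipulation legitimate and completes the proof.
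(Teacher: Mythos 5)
Your proof is correct and is exactly the ``straightforward computation'' the paper leaves to the reader: expand $\tLi_{s}(z)=\frac{1}{(s-1)!}\sum_{v\ge1}v^{s-1}z^{v}$, multiply out, and collect the coefficient of $q^{n}$ to recover $\gs_{\bfs-\bfone}^{\bfga}(n)$; the same rearrangement appears in the paper's proof of \lemref{lem:eulerpol} (there phrased via Eulerian polynomials) and of \lemref{lem:TxTy}. Your convergence justification is sound and in fact more careful than anything the paper supplies.
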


\begin{proof}
We leave the straightforward computation to the interested reader.
\end{proof}

\begin{lem}\label{lem:productLiLi}
For $\ga, \gb\in \bbZ/N\bbZ$,
\begin{equation*}
\tLi_{a}(\eta^{\ga} z) \cdot  \tLi_{b}(\eta^{\gb} z) =
\sum_{j=1}^a \gl^{j;N}_{a,b;\ga-\gb} \tLi_{j}(\eta^{\ga}z)
   +\sum_{j=1}^b \gl^{j;N}_{b,a;\gb-\ga}  \tLi_{j}(\eta^{\gb} z)
   +\gd_{\ga,\gb} \tLi_{a+b}(\eta^{\ga} z)
\end{equation*}
where the coefficients $\gl^{j;N}_{a,b;\ga} \in \Q(\eta)$ are given by
\begin{equation}\label{equ:gl=om}
\gl^{j;N}_{a,b;\ga} = (-1)^{b-1} \binom{a+b-j-1}{a-j}
 \frac{\om^N_{a+b-j-1;\ga} }{(a+b-j-1)!}.
\end{equation}
It satisfies that
\begin{equation}\label{equ:gl(-1)^jgl}
\gl^{j;N}_{a,b;\ga} = (-1)^{a+b} \gl^{j;N}_{b,a;\ga}
= (-1)^{a+b+j} \gl^{j;N}_{a,b;-\ga}= (-1)^{j} \gl^{j;N}_{b,a;-\ga}.
\end{equation}
\end{lem}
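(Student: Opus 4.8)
The plan is to pass to exponential generating functions in the weight parameters $a$ and $b$ and to reduce the identity to an elementary partial-fraction decomposition in the variable $z$. The starting point is the classical generating identity
\[
\sum_{s\ge 1}\tLi_s(w)\,x^{s-1}=\sum_{m\ge 0}\frac{\Li_{-m}(w)}{m!}\,x^{m}=\frac{we^{x}}{1-we^{x}},
\]
valid as a formal power series in $x$ (expand $we^{x}/(1-we^{x})=\sum_{n\ge 1}w^{n}e^{nx}$ and collect powers of $x$; here $\tLi_s(w)=\Li_{1-s}(w)/\Gamma(s)$). Setting $G_\ga(x):=\eta^{\ga}z\,e^{x}/(1-\eta^{\ga}z\,e^{x})=\sum_{s\ge 1}\tLi_s(\eta^{\ga}z)\,x^{s-1}$, the left-hand side of the Lemma becomes the coefficient of $x^{a-1}y^{b-1}$ in $G_\ga(x)\,G_\gb(y)$.

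Writing $u=\eta^{\ga}e^{x}$ and $v=\eta^{\gb}e^{y}$, I would then use the elementary identity of rational functions in $z$
\[
\frac{uz}{1-uz}\cdot\frac{vz}{1-vz}=\frac{v}{u-v}\cdot\frac{uz}{1-uz}+\frac{u}{v-u}\cdot\frac{vz}{1-vz},
\]
which, after substitution, reads $G_\ga(x)G_\gb(y)=\bigl(\eta^{\ga-\gb}e^{x-y}-1\bigr)^{-1}G_\ga(x)+\bigl(\eta^{\gb-\ga}e^{y-x}-1\bigr)^{-1}G_\gb(y)$, because $v/(u-v)=(u/v-1)^{-1}$ and $u/(v-u)=(v/u-1)^{-1}$. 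Next I would invoke the defining expansion of the numbers $\om^N_{n;\ga}$: when $\ga\ne\gb$ the factor $\bigl(\eta^{\ga-\gb}e^{x-y}-1\bigr)^{-1}=\sum_{n\ge 0}\frac{\om^N_{n;\ga-\gb}}{n!}(x-y)^{n}$ is holomorphic at $x=y$, so expanding $(x-y)^{n}$ by the binomial theorem and reading off the coefficient of $x^{a-1}y^{b-1}$ produces exactly $\sum_{j=1}^{a}\gl^{j;N}_{a,b;\ga-\gb}\tLi_j(\eta^{\ga}z)$ with $\gl$ as in \eqref{equ:gl=om} (after using $\binom{a+b-j-1}{b-1}=\binom{a+b-j-1}{a-j}$); the second summand similarly gives $\sum_{j=1}^{b}\gl^{j;N}_{b,a;\gb-\ga}\tLi_j(\eta^{\gb}z)$ by the same computation with $(x,a,\ga)$ and $(y,b,\gb)$ interchanged. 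This settles the case $\gd_{\ga,\gb}=0$; moreover \eqref{equ:gl=om} together with \lemref{lem:omNgeneral} shows $\gl^{j;N}_{a,b;\ga}\in\Q(\eta)$.

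The one delicate point is the diagonal case $\ga=\gb$, where $\bigl(e^{x-y}-1\bigr)^{-1}=\frac1{x-y}+\sum_{n\ge 0}\frac{\om^N_{n;0}}{n!}(x-y)^{n}$ (recall $\om^N_{n;0}=B_{n+1}/(n+1)$) now has a pole along $x=y$. The two regular parts contribute the same $\gl^{j;N}_{a,b;0}$ and $\gl^{j;N}_{b,a;0}$ terms as before, while the two polar parts combine into a difference quotient
\[
\frac{G_\ga(x)}{x-y}+\frac{G_\ga(y)}{y-x}=\frac{G_\ga(x)-G_\ga(y)}{x-y}=\sum_{s\ge 2}\tLi_s(\eta^{\ga}z)\sum_{\substack{p+r=s-2\\ p,r\ge 0}}x^{p}y^{r},
\]
whose coefficient of $x^{a-1}y^{b-1}$ is precisely $\tLi_{a+b}(\eta^{\ga}z)$; this produces the missing summand $\gd_{\ga,\gb}\tLi_{a+b}(\eta^{\ga}z)$ and completes the proof of the product formula. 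Finally, the symmetry relations \eqref{equ:gl(-1)^jgl} are read off from the closed form \eqref{equ:gl=om}: applying the reflection formula \eqref{equ:om(-1)^nom} with $n=a+b-j-1$ gives $\om^N_{a+b-j-1;-\ga}=(-1)^{a+b-j}\om^N_{a+b-j-1;\ga}$, hence $\gl^{j;N}_{a,b;\ga}=(-1)^{a+b+j}\gl^{j;N}_{a,b;-\ga}$, and the remaining identities follow on combining this with the binomial symmetry $\binom{m}{k}=\binom{m}{m-k}$ (the only care needed is at the boundary value $n=0$, i.e.\ $b=1$, $j=a$). I expect the only genuinely non-routine step to be spotting the partial-fraction identity above and recognizing its coefficients as the generating series of the $\om^N_{n;\ga}$; everything afterwards is bookkeeping, with the diagonal case demanding the most attention.
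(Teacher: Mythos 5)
Your proof of the product formula is correct and follows essentially the same route as the paper's: pass to the generating series $L_{\ga}(x)=\sum_{s\ge1}\tLi_s(\eta^{\ga}z)x^{s-1}=\eta^{\ga}ze^{x}/(1-\eta^{\ga}ze^{x})$, establish the partial-fraction identity $L_{\ga}(x)L_{\gb}(y)=(\eta^{\ga-\gb}e^{x-y}-1)^{-1}L_{\ga}(x)+(\eta^{\gb-\ga}e^{y-x}-1)^{-1}L_{\gb}(y)$, and expand the prefactors by the defining series of the $\om^{N}_{n;\ga}$. The only difference is that the paper defers the diagonal case $\ga=\gb$ to Bachmann and K\"uhn's Lemma~2.6, whereas you carry out the pole cancellation at $x=y$ explicitly; your difference-quotient computation correctly produces the term $\gd_{\ga,\gb}\tLi_{a+b}(\eta^{\ga}z)$, and your coefficient bookkeeping reproduces \eqref{equ:gl=om}.

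The last step, however, has a genuine gap. From \eqref{equ:gl=om} and \eqref{equ:om(-1)^nom} you correctly obtain $\gl^{j;N}_{a,b;\ga}=(-1)^{a+b+j}\gl^{j;N}_{a,b;-\ga}$ (and likewise with $a,b$ interchanged), but the remaining equality $\gl^{j;N}_{a,b;\ga}=(-1)^{a+b}\gl^{j;N}_{b,a;\ga}$ does \emph{not} follow from binomial symmetry: it would require $\binom{a+b-j-1}{a-j}=\binom{a+b-j-1}{b-j}$, and since the complements of $a-j$ and $b-j$ in $a+b-j-1$ are $b-1$ and $a-1$ respectively, the two binomial coefficients coincide only when $j=1$ or $a=b$. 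Indeed the relation fails in general: for $a=4$, $b=2$, $j=2$, $\ga=0$ one finds $\gl^{2;N}_{4,2;0}=(-1)\cdot 3\cdot\frac{B_4/4}{3!}=\frac{1}{240}$ while $(-1)^{a+b}\gl^{2;N}_{2,4;0}=(-1)\cdot 1\cdot\frac{B_4/4}{3!}=\frac{1}{720}$. So of the four expressions in \eqref{equ:gl(-1)^jgl} only the first and third (and, separately, the second and fourth) are actually equal; the full chain holds only for $j=1$ or $a=b$. To be fair, the paper's own proof disposes of \eqref{equ:gl(-1)^jgl} with the single sentence that it ``follows from \eqref{equ:om(-1)^nom}'' and suffers from the same defect; but since you attempted to justify the step, you should either restrict the claim to $j=1$ (the case used later, e.g.\ in \lemref{lem:diamond}, where $\binom{a+b-2}{a-1}=\binom{a+b-2}{b-1}$ does hold) or record only the reflection identity in $\ga\mapsto-\ga$. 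Your remark about the boundary case $n=0$ (i.e.\ $b=1$, $j=a$) is well taken, since \eqref{equ:om(-1)^nom} is asserted only for $n\ge1$.
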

\begin{proof}
First we observe that \eqref{equ:gl(-1)^jgl} follows from \eqref{equ:om(-1)^nom}.

Now we consider the generating series
\begin{equation}\label{equ:Lgax}
L_{\ga}(x)= \sum_{k_1=1}^{\infty} \tLi_{k_1}(\eta^{\ga} z) x^{k_1-1}
\quad\text{and}\quad
L_{\gb}(y)= \sum_{k_2=1}^{\infty} \tLi_{k_2}(\eta^{\gb} z) y^{k_2-1}.
\end{equation}
A simple computation shows that
\begin{equation*}
L_{\ga}(x)\cdot L_{\gb}(y) =
\frac{1}{\eta^{\ga-\gb} e^{x-y}-1} L_{\ga}(x)
+ \frac{1}{\eta^{\gb-\ga} e^{y-x}-1} L_{\gb}(y).
\end{equation*}

We now consider two cases: (1) $\ga= \gb$ and (2) $\ga \neq \gb.$

(1). If $\ga= \gb,$ then the proof is exactly the same as that of \cite[Lemma~2.6]{BachmannKu2013}.

(2). If $\ga \neq \gb,$ then we have
\begin{align*}
 L_{\ga}(x) \cdot L_{\gb}(y) = &\, \frac{ e^x \eta^{\ga}z}{1- e^x \eta^{\gb}z}\cdot \frac{ e^y \eta^{\ga}z}{1-e^y \eta^{\gb}z}\\
 = &\, \frac{1}{\eta^{\ga-\gb}e^{x-y}-1} L_{\ga}(x) +\frac{1}{\eta^{\gb-\ga}e^{y-x}-1} L_{\gb}(y)  \\
 = &\, \sum_{n>0} \frac{\om^N_{n;\ga-\gb}}{n!}(x-y)^n L_{\ga}(x)
+\sum_{n>0} \frac{\om^N_{n;\gb-\ga}}{n!}(y-x)^n L_{\gb}(y).
 \end{align*}
By definition \eqref{equ:Lgax} this equals
 \begin{align*}
  & \sum_{n>0} \frac{\om^N_{n;\ga-\gb}}{n!}(x-y)^n \sum_{j\ge 1} \tLi_{j}(\eta^{\ga}z) x^{j-1}
 +\sum_{n>0} \frac{\om^N_{n;\gb-\ga}}{n!}(y-x)^n \sum_{j\ge 1} \tLi_{j}(\eta^{\gb}z) y^{j-1}\\
=&\,  \sum_{b\ge 1}  \sum_{n=1}^{b-1} \sum_{j\ge 1}
 \frac{\om^N_{n;\ga-\gb}}{n!} \binom{n}{b-1}
 (-1)^{b-1}  x^{n-b+j} y^{b-1}  \tLi_{j}(\eta^{\ga}z)  \\
+ &\, \sum_{a\ge 1}  \sum_{n=1}^{a-1} \sum_{j\ge 1}
\frac{\om^N_{n;\gb-\ga}}{n!} \binom{n}{a-1} (-1)^{a-1}  x^{a-1} y^{n-a+j} \tLi_{j}(\eta^{\gb}z)\\
=&\,   \sum_{a\ge 1}\sum_{b\ge 1}  \sum_{j=1}^a
\frac{\om^N_{a+b-j-1;\ga-\gb}}{(a+b-j-1)!} \binom{a+b-j-1}{b-1}
 (-1)^{b-1} \tLi_{j}(\eta^{\ga}z)x^{a-1} y^{b-1} \\
+&\, \sum_{a\ge 1}\sum_{b\ge 1}  \sum_{j=1}^b
\frac{\om^N_{a+b-j-1;\gb-\ga}}{(a+b-j-1)!} \binom{a+b-j-1}{a-1}
(-1)^{a-1}\tLi_{j}(\eta^{\gb}z)x^{a-1} y^{b-1}.
\end{align*}
The lemma now follows from a comparison of coefficients.
\end{proof}

\begin{cor}  \label{lem:N=2}
Let $N=2$. For $a,b \in \N$ and $\ga, \gb\in \Z/2\Z$  we have
\begin{equation*}
\tLi_{a}(\eta^{\ga} z) \cdot  \tLi_{b}(\eta^{\gb} z) =
\sum_{j=1}^a \gl^{j;2}_{a,b;\ga-\gb} \tLi_{j}(\eta^{\ga}z)
   +\sum_{j=1}^b \gl^{j;2}_{b,a;\gb-\ga}  \tLi_{j}(\eta^{\gb} z)
   +\gd_{\ga,\gb} \tLi_{a+b}(\eta^{\ga} z)
\end{equation*}
where the coefficients $\gl^{j;2}_{a,b;\ga} \in \Q(\eta)$ are given by
\begin{align*}
\gl^{j;2}_{a,b;0} =&\, (-1)^{b-1} \binom{a+b-j-1}{a-j} \frac{B_{a+b-j}}{(a+b-j)!}, \\
\gl^{j;2}_{a,b;1} =&\,  \big(2^{a+b-j}-1\big) \gl^{j;2}_{a,b;0}.
\end{align*}
\end{cor}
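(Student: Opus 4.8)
The plan is to obtain this corollary as a direct specialization of \lemref{lem:productLiLi} to $N=2$, using the explicit closed forms for $\om^2_{n;\ga}$ established in \lemref{lem:oppositega} and \corref{cor:omN=2}. First I would note that since $\Z/2\Z=\{0,1\}$, the only colors that can occur as $\ga-\gb$ or $\gb-\ga$ in the statement of \lemref{lem:productLiLi} are $0$ (when $\ga=\gb$) and $1$ (when $\ga\ne\gb$); moreover in this case $\ga-\gb\equiv\gb-\ga$, so both of the auxiliary cases in the proof of \lemref{lem:productLiLi} collapse into the single sign pattern recorded below. Thus the product formula in the corollary is exactly the statement of \lemref{lem:productLiLi} with $N=2$, and it only remains to evaluate $\gl^{j;2}_{a,b;0}$ and $\gl^{j;2}_{a,b;1}$ from \eqref{equ:gl=om}.

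For the color $0$, I would substitute $\om^2_{n;0}=B_{n+1}/(n+1)$ (the first assertion of \lemref{lem:oppositega}, valid for all $N$) into \eqref{equ:gl=om} with $n=a+b-j-1$, so that $n+1=a+b-j$. Absorbing the resulting factor $1/(a+b-j)$ into $(a+b-j-1)!$ turns the denominator into $(a+b-j)!$ and gives
$$\gl^{j;2}_{a,b;0}=(-1)^{b-1}\binom{a+b-j-1}{a-j}\frac{B_{a+b-j}}{(a+b-j)!}.$$
For the color $1$, I would instead substitute $\om^2_{n;1}=(2^{n+1}-1)B_{n+1}/(n+1)$ from \corref{cor:omN=2}, again with $n=a+b-j-1$. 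Since this differs from $\om^2_{n;0}$ only by the scalar factor $2^{a+b-j}-1$, which does not involve the binomial or the factorial, \eqref{equ:gl=om} immediately yields $\gl^{j;2}_{a,b;1}=(2^{a+b-j}-1)\,\gl^{j;2}_{a,b;0}$, as claimed.

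There is essentially no genuine obstacle; the only care required is the index bookkeeping — tracking the shift $n\mapsto n+1=a+b-j$ and the cancellation that promotes $(a+b-j-1)!$ to $(a+b-j)!$ — and observing that \eqref{equ:gl(-1)^jgl} degenerates consistently when $N=2$. I would also remark that all $\gl^{j;2}_{a,b;\ga}$ are rational, which is compatible with \lemref{lem:omNgeneral}: at level $2$ the formula $\om^2_{n;1}=(2^{n+1}-1)B_{n+1}/(n+1)$ vanishes for even $n\ge2$ and equals $-1/2$ at $n=0$, so no imaginary part ever appears and the extension from $\Q$ to $\Q(\eta_2)$ is trivial.
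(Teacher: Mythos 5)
Your argument is correct and is precisely the paper's (one-line) proof, fleshed out: specialize Lemma~\ref{lem:productLiLi} to $N=2$ and substitute $\om^2_{n;0}=B_{n+1}/(n+1)$ and $\om^2_{n;1}=(2^{n+1}-1)B_{n+1}/(n+1)$ from Lemma~\ref{lem:oppositega} and Corollary~\ref{cor:omN=2} into \eqref{equ:gl=om} with $n=a+b-j-1$. The bookkeeping $(a+b-j)\cdot(a+b-j-1)!=(a+b-j)!$ and the observation that $\ga-\gb\equiv\gb-\ga\pmod 2$ are both handled correctly, so nothing is missing.
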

\begin{proof}
This follows from \corref{cor:omN=2} easily.
\end{proof}

\begin{eg}
Let $N=2$. We have $\gl^{1;2}_{1,1;0} = \gl^{1;2}_{1,1;1} = B_1 = -\frac{1}{2}$ and thus
\begin{equation*}
\tLi_1(z) \cdot  \tLi_1(z) =\tLi_2(z)-\tLi_1(z),  \quad
\tLi_1(z) \cdot  \tLi_1(-z) =-\frac12\tLi_1(z) -\frac12 \tLi_2(-z).
\end{equation*}
Therefore the product  $[1;0]\cdot[1;1]$ is given by \eqref{equ:1xodd1}.
\end{eg}

\begin{prop}\label{prop:l2-expli}
For $\ga, \gb \in \bbZ/N\bbZ$, we have
\begin{multline*}
[s;\ga] \cdot [t;\gb] =
[s, t;\ga, \gb]+ [t, s;\gb, \ga]+\gd_{\ga,\gb} [s+t;\ga]\\
 +\sum_{j=1}^{s}\gl^{j;N}_{s, t;\ga-\gb} [j;\ga]
 +\sum_{j=1}^{t}\gl^{j;N}_{t, s;\gb-\ga} [j;\gb].
\end{multline*}
\end{prop}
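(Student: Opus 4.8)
The plan is to use Proposition~\ref{prop:ExpressMDFbyLi}, which expresses each MDF $[\bfs;\bfga]$ as a nested sum of products of normalized polylogarithms $\tLi$, together with the product formula of Lemma~\ref{lem:productLiLi} for two such $\tLi$'s evaluated at (possibly distinct) roots of unity. First I would write
\begin{equation*}
[s;\ga]\cdot[t;\gb]=\Bigl(\sum_{m>0}\tLi_s(\eta^\ga q^m)\Bigr)\Bigl(\sum_{n>0}\tLi_t(\eta^\gb q^n)\Bigr)
=\sum_{m,n>0}\tLi_s(\eta^\ga q^m)\,\tLi_t(\eta^\gb q^n),
\end{equation*}
and then split the double sum over the three regions $m>n$, $m<n$, and $m=n$. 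The regions $m>n$ and $m<n$ reproduce, by Proposition~\ref{prop:ExpressMDFbyLi}, the two depth-two terms $[s,t;\ga,\gb]$ and $[t,s;\gb,\ga]$ respectively, since in those regions the two $\tLi$ factors are evaluated at $q^m$ and $q^n$ with strictly ordered exponents.

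The main content is the diagonal $m=n$. On that region one applies Lemma~\ref{lem:productLiLi} with $z=q^m$: the term $\gd_{\ga,\gb}\tLi_{a+b}(\eta^\ga z)$ summed over $m>0$ gives $\gd_{\ga,\gb}[s+t;\ga]$, while the two finite sums $\sum_{j=1}^s\gl^{j;N}_{s,t;\ga-\gb}\tLi_j(\eta^\ga q^m)$ and $\sum_{j=1}^t\gl^{j;N}_{t,s;\gb-\ga}\tLi_j(\eta^\gb q^m)$ summed over $m>0$ give $\sum_{j=1}^s\gl^{j;N}_{s,t;\ga-\gb}[j;\ga]$ and $\sum_{j=1}^t\gl^{j;N}_{t,s;\gb-\ga}[j;\gb]$, again by Proposition~\ref{prop:ExpressMDFbyLi} in depth one. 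Adding the three contributions yields exactly the claimed identity.

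The step I expect to require the most care is the justification of rearranging and splitting the double series: one must argue that for $|q|<1$ everything converges absolutely, so that the decomposition into the regions $m>n$, $m<n$, $m=n$ and the term-by-term application of Lemma~\ref{lem:productLiLi} on the diagonal are legitimate. Since $\tLi_s(z)$ is a rational function holomorphic on $|z|<1$ (cited from \cite{Foata2010}) and vanishes to first order at $z=0$, the estimate $\tLi_s(\eta^\ga q^m)=O(|q|^m)$ holds uniformly, which makes the double sum over $m,n$ absolutely convergent and licenses all the manipulations. Alternatively, and perhaps more cleanly, one can carry out the entire argument at the level of formal $q$-series in $\Q_N[\![q]\!]$, where the regional splitting of the index set $\{(m,n):m,n>0\}$ is purely combinatorial and no analytic input beyond Lemma~\ref{lem:productLiLi} is needed; I would present it this way to keep the proof short.
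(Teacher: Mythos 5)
Your proof is correct and follows essentially the same route as the paper's: express both factors via Proposition~\ref{prop:ExpressMDFbyLi}, split the double sum into the regions $m>n$, $m<n$, $m=n$, and apply Lemma~\ref{lem:productLiLi} on the diagonal with $z=q^m$. The paper treats everything as formal $q$-series and leaves the rearrangement implicit, which matches the second (and, as you say, cleaner) of your two justifications.
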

\begin{proof}
By \propref{prop:ExpressMDFbyLi} we have
\begin{align*}
[s;\ga]\cdot [t;\gb] =& \left(\sum_{n_1>n_2>0}  + \sum_{n_2>n_1>0}
+ \sum_{n_1=n_2} \right) \tLi_{s}(\eta^{\ga n_1} q^{n_1}) \tLi_{t}(\eta^{\gb n_2}q^{n_2})\\
=&\, [s, t;\ga, \gb]+ [t, s;\gb, \ga]
+ \sum_{n=1}^\infty \tLi_{s}(\eta^{\ga n} q^{n})\tLi_{t}(\eta^{\gb n}q^{n}).
\end{align*}
The proposition now follows from \lemref{lem:productLiLi}.
\end{proof}

\begin{eg}
The first nontrivial product at any level $N$ is given by
\begin{equation}\label{equ:[1;1][1;1]}
[1;1]\cdot [1;1]=2[1,1;1,1]+[2;1]-[1;1]
\end{equation}
since $\gl^{1;N}_{1,1;0}=B_1=-\frac{1}{2}$ for all $N$.
\end{eg}

\begin{eg}
The first products of MDFs at level $N=2$ are given by
{\allowdisplaybreaks
\begin{align}
[1]\cdot [\baro{1}]&\, =[1,\baro{1}]+[\baro{1},1]-\frac12[1]-\frac12[\baro{1}], \label{equ:1xodd1}\\
[\baro{2}] \cdot [\baro{1}]&\, =[2,\baro{1}]+[1,\baro{2}]-\frac12[\baro{2}]+[\baro{3}],\label{equ:odd2x1}\\
[2]\cdot [\baro{1}]&\, =[2,\baro{1}]+[\baro{1},2]+\frac14\Big([1]-2[2]-[\baro{1}]\Big), \label{equ:2xodd1}\\
[2]\cdot [\baro{2}]&\, =[2,\baro{2}]+[\baro{2},2]-\frac14\Big([2]+[\baro{2}]\Big).\label{equ:2xodd2}
\end{align}}
Here we have used the letter o (for ``odd'') on top of $s_j$ if the corresponding color $\ga_j=1$ is odd.
\end{eg}

\begin{eg}
At level $N=3$ we have
{\allowdisplaybreaks
\begin{align}
[1;1]\cdot[1;2]=&\,[1,1;1,2]+[1,1;2,1]+\frac{\sqrt{-3}}{6}\Big([1;2]-[1;1]\Big), \notag\\
[1;1]\cdot[2;0]=&\,[2,1;0,1]+[1,2;1,0]+\frac{1}{2}\Big([1;1]-[1;0]\Big)-\frac{\sqrt{-3}}{6}[2;0],\label{equ:N=3[2,0]}\\
[2;1]\cdot[1;2]=&\,[2,1;1,2]+[1,2;2,1]+\frac{1}{2}\Big([1;2]-[1;1]\Big)-\frac{\sqrt{-3}}{6}[2;1],\notag\\
[2;1]\cdot[2;2]=&\,[2,2;1,2]+[2,2;2,1]+\frac{1}{2}\Big([2;1]+[2;2]\Big)+\frac{\sqrt{-3}}{9}\Big([1;2]-[1;1]\Big),\notag\\
[3;1]\cdot[2;2]=&\,[3,2;1,2]+[2,3;2,1]+\frac{1}{2}[3;1]-\frac{\sqrt{-3}}{9}[2;1]-\frac{\sqrt{-3}}{18}[2;2].\notag
\end{align}}
\end{eg}

\begin{eg}
At level $N=4$ we have
{\allowdisplaybreaks
\begin{align}
[1;1]\cdot[1;2]=&\,[1,1;1,2]+[1,1;2,1]+\frac{\sqrt{-1}}{2}\Big([1;1]-[1;2]\Big),\notag\\
[1;1]\cdot[1;3]=&\,[1,1;1,3]+[1,1;3,1]-\frac{1}{2}\Big([1;1]+[1;3]\Big),\notag\\
[1;1]\cdot[2;0]=&\,[2,1;0,1]+[1,2;1,0]+\frac{1}{2}\Big([1;0]-[1;1]+\sqrt{-1}[2;0]\Big),\label{equ:N=4[2,0]}\\
[1;1]\cdot[2;2]=&\,[1,2;1,2]+[2,1;2,1]+\frac{1}{2}\Big([1;2]-[1;1]-\sqrt{-1}[2;2]\Big),\notag\\
[1;1]\cdot[2;3]=&\,[1,2;1,3]+[2,1;3,1]+\frac{1}{4}\Big([1;3]-[1;1]-2[2;3]\Big),\notag\\
[3;1]\cdot[2;2]=&\,[3,2;1,2]+[2,3;2,1]+\frac{1}{2}\Big([1;1]-[3;1]-[1,2]+\sqrt{-1}[2;1]\Big)+
   \frac{\sqrt{-1}}{4}[2, 2].\notag
\end{align}}
\end{eg}

\section{The algebra of MDFs}
In this section we will extend \propref{prop:l2-expli} to the general case by
studying the underlying algebra structure of the MDFs.

\begin{defn}
We define $\MD_N$ to be the $\Q_N$-vector space generated by
$[\emptyset]=1 \in \Q_N[\![q]\!]$ and all MDFs $[\bfs;\bfga]$ for
$\bfs\in\N^d$ and $\bfga\in(\Z/N\Z)^d$.
On $\MD_N$ we have the increasing filtration $\filw_{\bullet}$
given by the weight and the increasing filtration $\fille_{\bullet}$
given by the depth, i.e.,
\begin{align*}
\filw_k(\MD_N) &:=  \big\langle[\bfs;\bfga] \,\big|\, \bfs\in\N^d, \bfga\in(\Z/N\Z)^d, d\in\N, |\bfs|\le k \rangle_{\Q_N}, \\
\fille_d(\MD_N) &:= \big\langle[\bfs;\bfga] \,\big|\, \bfs\in\N^r, \bfga\in(\Z/N\Z)^r, r\le d \rangle_{\Q_N}.
\end{align*}
When considering the depth and weight filtrations at the same time we write
$\filwle_{k,d} := \filw_k \fille_d$.
As usual we define the graded pieces by
\begin{align*}
\grw_k(\MD_N) &:=   \filw_k(\MD_N) \slash \filw_{k-1}(\MD_N) \\
\grl_d(\MD_N) &:=  \fille_d(\MD_N) \slash \fille_{d-1}(\MD_N).
\end{align*}
and $\grwl_{k,d} := \grw_k \grl_d$.
\end{defn}

\begin{defn}\label{defn:qMZN}
Let $G_N={\rm Gal}(\Q_N/\Q)$. Let $\qMZ_N$ be the $\Q_N$-vector space generated by
\begin{equation}\label{equ:SN}
S_N:=\Big\{[\bfs;\bfga]: s_1>1 \Big\}\cup \Big\{g_\gb(\bfs;\bfga):
     1\le \gb<N,\bfs\in\N^d,\bfga\in(\Z/N\Z)^d,\ d\ge 0\Big\},
\end{equation}
where $g_\gb(\bfs;\bfga):=[1,\bfs;0,\bfga]-[1,\bfs;\gb,\bfga]$ is defined precisely
as follows. If $\gb<\ga_1$ or $\bfs=\emptyset$ then
\begin{multline}\label{equ:defng_gb1}
g_\gb(\bfs;\bfga):=\sum_{n>n_1 > \dots > n_d>0} \Big(\tLi_1\big(q^n\big) -\tLi_1\big(\eta^{\gb} q^{n+\gb}\big ) \Big) \tLi_{s_1}\left(\eta^{\ga_1}q^{n_1}\right) \cdots \tLi_{s_d}\left(\eta^{\ga_d} q^{n_d}\right)\\
-\sum_{\ell=1}^\gb\sum_{n_1>\dots>n_d>0} \tLi_1\big(\eta^{\gb} q^{n_1+\ell}\big ) \Big) \tLi_{s_1}\left(\eta^{\ga_1}q^{n_1}\right) \cdots \tLi_{s_d}\left(\eta^{\ga_d} q^{n_d}\right),
\end{multline}
where $n_1=0$ if $\bfs=\emptyset$.
If $d\ge1$ and $\gb\ge \ga_1$ then
\begin{multline}\label{equ:defng_gb2}
g_\gb(\bfs;\bfga):=\sum_{n>n_1 > \dots > n_d>0} \Big(\tLi_1\big(q^n\big) -\tLi_1\big(\eta^{\gb} q^{n+\gb-N}\big ) \Big) \tLi_{s_1}\left(\eta^{\ga_1}q^{n_1}\right) \cdots \tLi_{s_d}\left(\eta^{\ga_d} q^{n_d}\right)\\
+\sum_{\ell=\gb+1}^N \sum_{n_1>\dots>n_d>0} \tLi_1\big(\eta^{\gb} q^{n_1-N+\ell}\big ) \Big) \tLi_{s_1}\left(\eta^{\ga_1}q^{n_1}\right) \cdots \tLi_{s_d}\left(\eta^{\ga_d} q^{n_d}\right).
\end{multline}
Let $\qMZ_N(\Q)$ be the $\Q$-vector space generated by $S_N$ given by \eqref{equ:SN}.
Define
\begin{align*}
\ol{\qMZ_N} &\, =\oplus_k \gr_k^{\operatorname{W};N}  \qMZ_N ,\\
 \overline{\qMZ_N}(\Q)
&\, = \oplus_k \big(\gr_k^{\operatorname{W};N}  \qMZ_N \big)^{G_N}
=\oplus_k \gr_k^{\operatorname{W};N}  \qMZ_N(\Q).
\end{align*}
Further we define $\MZ_N$ to be the $\Q_N$-vector space generated by
\begin{equation*}
\Big\{\zeta_N(\bfs;\bfga): s_1>1 \Big\}\cup \Big\{\gG_\gb(\bfs;\bfga):
     1\le \gb<N,\bfs\in\N^d,\bfga\in(\Z/N\Z)^d,\ d\ge 0\Big\},
\end{equation*}
where $\gG_\gb(\bfs;\bfga)= \zeta^\ast_N(1,\bfs;0,\bfga)-\zeta^\ast_N(1,\bfs;j,\bfga)$
are define by \eqref{equ:gG_gb} in section~\ref{sec:regularizationMZV}.
Similarly, $\MZ_N(\Q)$ is the corresponding $\Q$-vector space.
\end{defn}

\propref{prop:l2-expli} is a special case of \thmref{thm:md-algebra} given below.
To prove the theorem in general we now consider the associated quasi-shuffle
algebras (cf.\ \cite{HoffmanIh2012}). Let $\F$ be a field of characteristic $0$
(we will take $\F$ to be either $\Q$ or $\Q_N$ later). Let
$$A=\left\{ z_{j;\ga}: j\in \N,\ga\in\Z/N\Z \right\}$$
be the alphabet, $\F A$ the $\F$-vector space generated by letters in $A$ and
$\F\langle A \rangle$ the noncommutative polynomial algebra over $\F$
generated by words with letters in $A$. For a commutative and associative product $\diamond$ on
$\F A$, $a,b \in A$ and $w,v \in \F\langle A\rangle$ we define on $\F\langle A\rangle$
recursively a product by $1*w=w*1=w$ and
\begin{equation}\label{defn:stuffle}
aw \ast bv := a(w \ast bv) + b(aw \ast v) + (a \diamond b)(w \ast v).
\end{equation}
Equipped with this product one has the
\begin{prop}\label{prop:HI-quasi}
The vector space $\F\langle A\rangle$ with the product $\ast$ is a commutative $\F$-algebra.
\end{prop}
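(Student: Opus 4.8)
This is Hoffman's quasi-shuffle theorem for the alphabet $A$ together with the commutative associative product $\diamond$ on $\F A$ (cf.\ \cite{HoffmanIh2012}), so the plan is to run the two standard length inductions. First I would check that $\ast$ is well defined on pairs of words: the right-hand side of \eqref{defn:stuffle} involves only the products $w\ast bv$, $aw\ast v$ and $w\ast v$, the sum of the lengths of whose two factors is in each case strictly smaller than the sum of the lengths of $aw$ and $bv$; here $a\diamond b=\sum_i c_i z_i\in\F A$ is to be read so that $(a\diamond b)(w\ast v)=\sum_i c_i\,z_i(w\ast v)$, and $a(w\ast bv)$ means prepending $a$ to each word occurring in the linear combination $w\ast bv$. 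Hence the recursion terminates, and extending $\ast$ bilinearly gives a well-defined $\F$-bilinear product on $\F\langle A\rangle$ for which the empty word $1$ is a two-sided identity (the clause $1\ast w=w\ast 1=w$).

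For commutativity, I would show $u\ast v=v\ast u$ for all words $u,v$ by induction on the sum of their lengths. The case where one of them is empty is the identity clause; otherwise write $u=aw$ and $v=bv'$, so that \eqref{defn:stuffle} gives
\[
u\ast v=a(w\ast v)+b(u\ast v')+(a\diamond b)(w\ast v'),\qquad
v\ast u=b(v'\ast u)+a(v\ast w)+(b\diamond a)(v'\ast w).
\]
The two right-hand sides coincide term-by-term by the inductive hypothesis applied to the three shorter products, together with $a\diamond b=b\diamond a$.

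Associativity is the substance of the proposition; I would prove $(u\ast v)\ast w=u\ast(v\ast w)$ by induction on the sum of the lengths of $u$, $v$, $w$. If one of the three is empty, both sides equal the $\ast$-product of the remaining two. Otherwise write $u=au'$, $v=bv'$, $w=cw'$. Expanding $v\ast w$ by \eqref{defn:stuffle} and then applying \eqref{defn:stuffle} again to the outer product, using bilinearity to extract the common leading letter $a$ from the resulting sum, I expect to obtain
\[
u\ast(v\ast w)=a\bigl(u'\ast v\ast w\bigr)+b\bigl(u\ast v'\ast w\bigr)+c\bigl(u\ast v\ast w'\bigr)+(a\diamond b)\bigl(u'\ast v'\ast w\bigr)+(a\diamond c)\bigl(u'\ast v\ast w'\bigr)+(b\diamond c)\bigl(u\ast v'\ast w'\bigr)+\bigl(a\diamond(b\diamond c)\bigr)\bigl(u'\ast v'\ast w'\bigr),
\]
where every triple product such as $u'\ast v\ast w$ already makes sense and is unambiguous by the inductive hypothesis, because its total length is strictly smaller. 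The mirror computation for $(u\ast v)\ast w$ — opening $u\ast v$ first and then collecting the resulting terms with leading letter $c$ into $c\bigl((u\ast v)\ast w'\bigr)$ by bilinearity of $\ast$ in the first argument — produces the same seven-term expression, except with the last coefficient written as $(a\diamond b)\diamond c$; these agree by associativity of $\diamond$ on $\F A$, and the induction closes.

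The main obstacle is the bookkeeping in the associativity step: one must verify that every intermediate product generated by the nested applications of \eqref{defn:stuffle} has strictly smaller total length than $u$, $v$, $w$ combined — which is precisely why the induction is organized on the lengths of the three original words and not on any feature of the intermediate terms, since $\diamond$ need not preserve length — and one must carefully re-collect the ``$c(\cdots)$'' summands so that the two seven-term normal forms can be matched.
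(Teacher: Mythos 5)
Your argument is correct: it is precisely the double induction on total word length used in the reference the paper cites for this statement (Hoffman--Ihara, Theorem 2.1) --- the paper itself gives no independent proof, only that citation. The one delicate point, namely re-collecting the three summands with leading letter $c$ into $c\bigl((u\ast v)\ast w'\bigr)$ so that both sides reduce to the same seven-term normal form before the inductive hypothesis and the associativity of $\diamond$ are invoked, is one you explicitly flag and handle, so nothing is missing.
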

\begin{proof}
See \cite[Theorem 2.1]{HoffmanIh2012}.
\end{proof}

Motivated by the product expression of the polylogarithms in \lemref{lem:productLiLi}
we define the product $\diamond$ on $\F A$ by
\begin{equation}\label{equ:diamondDefn}
z_{a;\ga} \diamond z_{b;\gb} =
 \sum_{j=1}^a \gl^{j;N}_{a,b;\ga-\gb} z_{j;\ga}
        + \sum_{j=1}^b \gl^{j;N}_{b,a;\gb-\ga} z_{j;\ga} + \gd_{\ga,\gb} z_{a+b;\ga}.
\end{equation}
This is an commutative and associative product  on $\F A$, because it arises from the product of the pairwise linearly independent polylogarithms $\tLi_t(\eta^\ga z)$ in \lemref{lem:productLiLi},
and therefore   $\left( \F\langle A\rangle, \ast \right)$ is a commutative $\F$-algebra
by \propref{prop:HI-quasi} above.
\thmref{thm:md-algebra} now follows from the next proposition.

\begin{prop}\label{prop:hoffmannalgebra}
Let $[\ \ ]: (\Q_N\langle A\rangle, *) \longrightarrow (\MD_N, \cdot)$ be the $\Q_N$-linear map
such that $[z_{s_1;\ga_1}\dots z_{s_d;\ga_d}]:=[s_1,\dots,s_d;\ga_1,\dots,\ga_d]$. Then we have
\begin{equation*}
[ w \ast v ]    = [w] \cdot [v] \quad \forall w,b\in \Q_N\langle A\rangle.
\end{equation*}
Therefore $(\MD_N, \cdot)$ is a $\Q_N$-algebra and $[\ \ ]$ a $\Q_N$-algebra homomorphism.
\end{prop}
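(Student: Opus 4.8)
The plan is to deduce $[w\ast v]=[w]\cdot[v]$ from the nested-sum representation of MDFs in \propref{prop:ExpressMDFbyLi} by an induction on the total number of letters of $w$ and $v$, after first strengthening it to a truncated version that allows the recursion to close. For $n\in\N\cup\{\infty\}$ and a word $w=z_{s_1;\ga_1}\cdots z_{s_d;\ga_d}$ I would set
\[
[w]_{<n}:=\sum_{n>n_1>\dots>n_d>0}\tLi_{s_1}\bigl(\eta^{\ga_1}q^{n_1}\bigr)\cdots\tLi_{s_d}\bigl(\eta^{\ga_d}q^{n_d}\bigr),
\]
extended $\Q_N$-linearly to $\Q_N\langle A\rangle$. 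Since each $\tLi_s(\eta^{\ga}q^m)$ begins in $q$-degree $m$, each $[w]_{<n}$ is a well-defined element of $\Q_N[\![q]\!]$, and $[w]_{<\infty}=[w]$ by \propref{prop:ExpressMDFbyLi}. The statement to prove by induction is $[w]_{<n}\cdot[v]_{<n}=[w\ast v]_{<n}$ for all $w,v$ and all $n$; the case $n=\infty$ is the proposition.

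The base case, where one of $w,v$ is empty, holds since $1\ast v=v$ and $[1]_{<n}=1$. For the inductive step I would write $w=z_{a;\ga}w'$ and $v=z_{b;\gb}v'$, substitute the defining sums into $[w]_{<n}\cdot[v]_{<n}$, and split the resulting double sum according to whether the leading index $n_1$ from $w$ and the leading index $m_1$ from $v$ satisfy $n_1>m_1$, $n_1<m_1$, or $n_1=m_1$. When $n_1>m_1$, the index $n_1$ exceeds every remaining index, so $\tLi_a(\eta^{\ga}q^{n_1})$ factors out and the remaining double sum equals $[w']_{<n_1}\cdot[v]_{<n_1}$; the induction hypothesis (for the shorter pair $w',v$) rewrites this as $[w'\ast v]_{<n_1}$, and summing over $0<n_1<n$ gives $[z_{a;\ga}(w'\ast v)]_{<n}$. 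The range $n_1<m_1$ is symmetric and contributes $[z_{b;\gb}(w\ast v')]_{<n}$. When $n_1=m_1=:k$, the two leading polylogs have arguments $\eta^{\ga}q^{k}$ and $\eta^{\gb}q^{k}$, so \lemref{lem:productLiLi} rewrites their product as $\sum_{j=1}^a\gl^{j;N}_{a,b;\ga-\gb}\tLi_j(\eta^{\ga}q^{k})+\sum_{j=1}^b\gl^{j;N}_{b,a;\gb-\ga}\tLi_j(\eta^{\gb}q^{k})+\gd_{\ga,\gb}\tLi_{a+b}(\eta^{\ga}q^{k})$, while the remaining double sum equals $[w']_{<k}\cdot[v']_{<k}=[w'\ast v']_{<k}$ by induction; summing over $0<k<n$ then produces exactly $[(z_{a;\ga}\diamond z_{b;\gb})(w'\ast v')]_{<n}$ by the definition \eqref{equ:diamondDefn} of $\diamond$. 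Adding the three contributions and comparing with the recursion \eqref{defn:stuffle} defining $\ast$ yields $[w]_{<n}\cdot[v]_{<n}=[w\ast v]_{<n}$, which closes the induction.

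Most of this is routine bookkeeping; the two points needing attention are the following. First, passing to the truncated identity is genuinely necessary: once the leading term is pulled out of the sum one is left with $[w']_{<n_1}\cdot[v]_{<n_1}$ rather than the full product, so it is the truncated statement that the induction hypothesis has to supply. The rearrangements of the $q$-series are harmless because in $\Q_N[\![q]\!]$ the coefficient of each $q^m$ is a finite sum. Second, the coincident-index case is precisely what the definition \eqref{equ:diamondDefn} of $\diamond$ was engineered to match, so verifying it reduces to \lemref{lem:productLiLi}; I expect this matching, together with the bookkeeping of index ranges in the three-way split, to be the only place that needs real care.

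Finally, granting \propref{prop:HI-quasi}, the identity $[w\ast v]=[w]\cdot[v]$ shows that $[w]\cdot[v]=[w\ast v]\in\MD_N$ for all $w,v$, so $\MD_N$ is a $\Q_N$-subalgebra of $\Q_N[\![q]\!]$ and the surjective $\Q_N$-linear map $[\ \ ]$ is a $\Q_N$-algebra homomorphism onto it, which is the proposition. As noted in the text, \thmref{thm:md-algebra} and its depth-one special case \propref{prop:l2-expli} then follow at once.
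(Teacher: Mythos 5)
Your proposal is correct and is essentially the argument the paper intends: the paper's proof simply cites Hoffman's induction on the depth of $w$ and $v$ combined with \lemref{lem:productLiLi}, which is exactly the three-way split (leading index larger, smaller, or equal) that you carry out, with the equal-index case absorbed by $\diamond$ via \eqref{equ:diamondDefn}. Your explicit introduction of the truncated sums $[w]_{<n}$ is the standard device needed to make that induction close, so you have only filled in details the paper leaves to the reference.
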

\begin{proof}
This can be proved by the same argument as in the MZV case
(see, for e.g., \cite[Theorem~3.2]{Hoffman1997}) by using induction on the
depth of the words $w$ and $v$ together with \lemref{lem:productLiLi}.
\end{proof}

\begin{thm} \label{thm:md-algebra}
The $\Q_N$-vector space $\MD_N$
has the structure of a bifiltered $\Q_N$-Algebra $(\MD_N,\, \cdot,\,\filw_{\bullet},\,\fille_{\bullet})$,
where the multiplication is the natural multiplication of formal power series and
the filtrations $\filw_{\bullet}$ and  $\fille_{\bullet}$ are induced by the $weight$
and $depth$,  in particular
\begin{equation*}
\filwle_{k_1,d_1}(\MD_N) \cdot  \filwle_{k_2,d_2}(\MD_N) \subset \filwle_{k_1+k_2,d_1+d_2}(\MD_N).
\end{equation*}
\end{thm}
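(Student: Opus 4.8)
The plan is to deduce everything from Proposition~\ref{prop:hoffmannalgebra} together with an elementary filtration analysis of the quasi-shuffle product $\ast$ on $\Q_N\langle A\rangle$. Since the $\Q_N$-linear map $[\ ]\colon(\Q_N\langle A\rangle,\ast)\to(\MD_N,\cdot)$ of Proposition~\ref{prop:hoffmannalgebra} is an algebra homomorphism whose image already contains $[\emptyset]=1$ and every generator $[\bfs;\bfga]$, its image is all of $\MD_N$; hence the natural power-series product of two elements of $\MD_N$ lies again in $\MD_N$, and $(\MD_N,\cdot)$ is a $\Q_N$-algebra with the multiplication as claimed.

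Next I would transport the two filtrations back to $\Q_N\langle A\rangle$. For a word $w=z_{s_1;\ga_1}\cdots z_{s_d;\ga_d}$ put $\mathrm{wt}(w):=s_1+\cdots+s_d$ and $\mathrm{dep}(w):=d$, and let $\widetilde{\filw}_k$, $\widetilde{\fille}_d$ and $\widetilde{\filwle}_{k,d}:=\widetilde{\filw}_k\cap\widetilde{\fille}_d$ be the corresponding increasing filtrations of $\Q_N\langle A\rangle$. By the very definitions of the weight and depth of an MDF one has $[\,\widetilde{\filwle}_{k,d}(\Q_N\langle A\rangle)\,]\subseteq\filwle_{k,d}(\MD_N)$, so it suffices to prove
\[
\widetilde{\filwle}_{k_1,d_1}\ast\widetilde{\filwle}_{k_2,d_2}\subseteq\widetilde{\filwle}_{k_1+k_2,d_1+d_2}
\]
inside $\Q_N\langle A\rangle$ and then apply $[\ ]$.

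For this I would induct on $\mathrm{dep}(w)+\mathrm{dep}(v)$ using the recursion $aw\ast bv=a(w\ast bv)+b(aw\ast v)+(a\diamond b)(w\ast v)$, the base case $1\ast v=v\ast1=v$ being trivial. The crucial observation is that the diamond product \eqref{equ:diamondDefn} is \emph{filtered} in both gradings: every letter appearing in $z_{a;\ga}\diamond z_{b;\gb}$ has weight at most $a+b$ (the summands $z_{j;\cdot}$ satisfy $j\le a$ or $j\le b$, and the remaining term is $z_{a+b;\ga}$) and depth exactly $1\le\mathrm{dep}(z_{a;\ga})+\mathrm{dep}(z_{b;\gb})$. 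Granting this, in each of the three terms on the right both the weight and the depth are bounded by the sum of those of $aw$ and $bv$: for $a(w\ast bv)$ and $b(aw\ast v)$ this uses the inductive hypothesis applied to $w\ast bv$ and $aw\ast v$ (whose depth sums are strictly smaller), and for $(a\diamond b)(w\ast v)$ it combines the inductive bound on $w\ast v$ with the weight/depth bound on $a\diamond b$ just noted. This yields the displayed inclusion, and applying the $\Q_N$-algebra homomorphism $[\ ]$ gives $\filwle_{k_1,d_1}(\MD_N)\cdot\filwle_{k_2,d_2}(\MD_N)\subseteq\filwle_{k_1+k_2,d_1+d_2}(\MD_N)$.

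I do not anticipate a serious obstacle here; the only point requiring care is that $\diamond$ is \emph{not} homogeneous for the weight (only filtered), which is exactly why the statement is phrased with $\filw_\bullet$ rather than the weight grading, so one must work with filtrations throughout the induction. Everything else is routine bookkeeping with the recursion.
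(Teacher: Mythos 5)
Your proof is correct and takes essentially the same route the paper intends: the paper's own proof is a one-line appeal to \propref{prop:hoffmannalgebra}, the recursion \eqref{defn:stuffle} and the definition \eqref{equ:diamondDefn} of $\diamond$, and your induction on total depth together with the observation that $\diamond$ raises weight by at most $a+b$ and has depth one is exactly the bookkeeping being left implicit there.
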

\begin{proof}
This follows easily from \propref{prop:hoffmannalgebra},
the definition in \eqref{defn:stuffle} and \eqref{equ:diamondDefn}.
\end{proof}

\begin{eg} \label{eg:stuffle[a;ga][b,c;gb,gam]}
For $a,b,c \in \N$ and $\ga,\gb,\gam\in\Z/N\Z$ we have
\begin{align*}
[a;\ga]\cdot[b,c;\gb,\gam] &=
[z_{a;\ga}* (z_{b;\gb} z_{c;\gam})]
= [z_{a;\ga} z_{b;\gb} z_{c;\gam} + z_{b;\gb} z_{a;\ga}z_{c;\gam} +z_{b;\gb} z_{c;\gam}z_{a;\ga} ]\\
&\hskip2cm +[ z_{b;\gb} (z_{a;\ga} \diamond z_{c;\gam}) + (z_{a;\ga} \diamond z_{b;\gb})z_{c;\gam} ]\\
& =[a,b,c;\ga,\gb,\gam]+[b,a,c;\gb,\ga,\gam]+[b,c,a;\gb,\gam,\ga]  \\
&\quad  + \gd_{\ga,\gam} [b, a+c;\gb,\ga] + \gd_{\ga,\gb}[a+b,c;\gam,\ga] \\
& \quad + \sum_{j=1 }^a \gl_{a,c;\ga-\gam}^j[b,j;\gb,\ga]
  + \sum_{j=1 }^c  \gl_{c,a;\gam-\ga}^j [b, j;\gb,\gam] \\
& \quad  + \sum_{j=1 }^a \gl_{a,b;\ga-\gb}^j [j, c;\ga,\gam]
+  \sum_{j= 1}^b \gl_{b,a;\gb-\ga}^j [j, c;\gb,\gam].
\end{align*}
\end{eg}

\begin{lem}\label{lem:diamond}
For all $a,b,s_1,\dots,s_d\in\N$ and $\ga,\gb,\ga_1,\dots,\ga_d\in\Z/N\Z$ we have
\begin{equation*}
[(z_{a;\ga} \diamond z_{b;\gb})z_{s_1;\ga_1}\dots z_{s_d;\ga_d} ] \in \qMZ_N.
\end{equation*}
\end{lem}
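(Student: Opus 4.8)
The plan is to unwind the product $z_{a;\ga} \diamond z_{b;\gb}$ using its definition \eqref{equ:diamondDefn} and then show each resulting term, when multiplied on the right by the word $z_{s_1;\ga_1}\dots z_{s_d;\ga_d}$ and passed through $[\ \ ]$, lands in $\qMZ_N$. By \eqref{equ:diamondDefn} we have
\begin{equation*}
z_{a;\ga} \diamond z_{b;\gb} =
 \sum_{j=1}^a \gl^{j;N}_{a,b;\ga-\gb} z_{j;\ga}
        + \sum_{j=1}^b \gl^{j;N}_{b,a;\gb-\ga} z_{j;\gb} + \gd_{\ga,\gb} z_{a+b;\ga},
\end{equation*}
so $[(z_{a;\ga} \diamond z_{b;\gb})z_{s_1;\ga_1}\dots z_{s_d;\ga_d}]$ is a $\Q_N$-linear combination of MDFs of the form $[c,\bfs;\gam,\bfga]$ where $c\in\N$ and $\gam\in\Z/N\Z$. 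The key observation is that any such MDF already lies in $\qMZ_N$: if $c>1$ it is a generator of $\qMZ_N$ by the first set in \eqref{equ:SN}; if $c=1$, then using the definition $g_\gb(\bfs;\bfga)=[1,\bfs;0,\bfga]-[1,\bfs;\gb,\bfga]$ together with the fact that one of $[1,\bfs;\gam,\bfga]$ (for the specific $\gam$ appearing) can be written via a single $g_\gam$ and the ``base'' term $[1,\bfs;0,\bfga]$.

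So the real content is to handle the weight-one head: I need to show that $[1,\bfs;\gam,\bfga]\in\qMZ_N$ for every $\gam\in\Z/N\Z$ and every $(\bfs;\bfga)$. For $\gam=0$ this is immediate, but $[1,\bfs;0,\bfga]$ itself is \emph{not} one of the listed generators, so I must also show $[1,\bfs;0,\bfga]\in\qMZ_N$. The trick is to use \propref{prop:l2-expli} / \exref{eg:stuffle[a;ga][b,c;gb,gam]} in reverse: the stuffle product $[1;0]\cdot[\bfs;\bfga]$ expresses a sum of several depth-$(d{+}1)$ MDFs with a leading letter $z_{1;0}$, plus strictly lower-depth or higher-weight-head correction terms, in terms of the product $[1;0]\cdot[\bfs;\bfga]$ — but $[1;0]$ is itself divergent-looking; instead I use that $[1;0]=[1;0]$ together with the relation $N[1,\bfs;0,\bfga]+\sum_{\ell=1}^{m}(\dots) = (\text{stuffle terms})$, exactly paralleling the matrix argument of Definition~\ref{defn:regularize}. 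In fact, summing $g_\gb(\bfs;\bfga)$ over $\gb=0,\dots,N-1$ (with $g_0=0$), and adding the stuffle expansion of $\big(\sum_{\gb}[1;\gb]\big)\cdot[\bfs;\bfga]$ — note $\sum_{\gb}[1;\gb]$ is the level-$1$ divisor function $[1]$ which is a genuine convergent $q$-series — one obtains an expression for $N[1,\bfs;0,\bfga]$ as a $\Q_N$-linear combination of: the $g_\gb$'s, the convergent product $[1]\cdot[\bfs;\bfga]$, deeper MDFs with first slot $>1$, and MDFs with smaller depth. An induction on depth $d$ then closes the argument: for $d=0$, $[1;\gam]=[1;0]-g_\gam$ and $[1;0]=\tfrac1N[1]$ is (a scalar multiple of) a convergent $q$-series, hence manifestly in $\qMZ_N$; for the inductive step, all the correction terms have either strictly smaller depth (covered by the hypothesis) or first slot $>1$ (covered by the generators in \eqref{equ:SN}).

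Assembling: given the linear combination $[(z_{a;\ga} \diamond z_{b;\gb})z_{s_1;\ga_1}\dots z_{s_d;\ga_d}]=\sum_c c_c\,[c,\bfs;\gam_c,\bfga]$ with $c_c\in\Q_N$, split off the terms with $c>1$ (which lie in $\qMZ_N$ directly) and handle the $c=1$ terms by the previous paragraph, writing $[1,\bfs;\gam,\bfga]=[1,\bfs;0,\bfga]-g_\gam(\bfs;\bfga)$ and invoking $[1,\bfs;0,\bfga]\in\qMZ_N$. The main obstacle is precisely this last point — proving $[1,\bfs;0,\bfga]\in\qMZ_N$ — because it requires reproducing (at the level of $q$-series rather than regularized MZVs) the nonsingularity-of-$M(N,m)$ bookkeeping from Definition~\ref{defn:regularize}, or else finding the cleaner route via the convergent level-$1$ function $[1]=\sum_\gb[1;\gb]$ and a single-step depth induction; the latter is what I would write up, since it avoids the matrix inversion entirely.
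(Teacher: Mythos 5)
There is a genuine gap, and it sits exactly where you flagged the ``main obstacle.'' Your reduction of $[(z_{a;\ga}\diamond z_{b;\gb})z_{s_1;\ga_1}\dots z_{s_d;\ga_d}]$ to a $\Q_N$-linear combination of terms $[c,\bfs;\gam,\bfga]$ is fine, and the terms with $c>1$ are generators of $\qMZ_N$ by \eqref{equ:SN}. But your claim that each individual weight-one head term lies in $\qMZ_N$ --- in particular that $[1,\bfs;0,\bfga]\in\qMZ_N$ --- is false, and no amount of bookkeeping with the matrix $M(N,m)$ or with $t_N=\sum_\gb[1;\gb]$ will rescue it. The computation in the proof of \thmref{thm:polyadalg}(i) shows precisely that
\begin{equation*}
N[1,\bfs;0,\bfga]\equiv t_N\cdot[\bfs;\bfga] \pmod{\qMZ_N},
\end{equation*}
and part (ii) of that theorem shows $t_N$ is transcendental over $\qMZ_N$; hence $[1,\bfs;0,\bfga]\notin\qMZ_N$ whenever $[\bfs;\bfga]\ne 0$. (Being a convergent $q$-series is irrelevant here: every MDF is one, and $\qMZ_N$ is the span of the specific generators in \eqref{equ:SN}, of which only the \emph{differences} $g_\gb(\bfs;\bfga)=[1,\bfs;0,\bfga]-[1,\bfs;\gb,\bfga]$, not the individual $[1,\bfs;\gam,\bfga]$, appear.)

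The idea your proposal misses is a cancellation among the $j=1$ coefficients of the diamond product, which is the actual content of the lemma. By \eqref{equ:gl=om} and the symmetry $\om^N_{n;\ga}=-(-1)^n\om^N_{n;-\ga}$ of \lemref{lem:oppositega}, one has $\gl^{1;N}_{b,a;\gb-\ga}=-\gl^{1;N}_{a,b;\ga-\gb}$, so the weight-one part of $z_{a;\ga}\diamond z_{b;\gb}$ is a single scalar times $z_{1;\ga}-z_{1;\gb}$. Then
\begin{equation*}
[(z_{1;\ga}-z_{1;\gb})z_{s_1;\ga_1}\dots z_{s_d;\ga_d}]
=[1,\bfs;\ga,\bfga]-[1,\bfs;\gb,\bfga]=g_\gb(\bfs;\bfga)-g_\ga(\bfs;\bfga)\in\qMZ_N,
\end{equation*}
which together with the $c>1$ terms finishes the proof. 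Without this pairing the statement would simply be wrong, so the gap is not a missing technical detail but the essential point of the lemma.
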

\begin{proof}
By the definition \eqref{equ:diamondDefn}
\begin{equation*}
z_{a;\ga} \diamond z_{b;\gb} =
 \sum_{j=1}^a \gl^{j;N}_{a,b;\ga-\gb} z_{j;\ga}
        + \sum_{j=1}^b \gl^{j;N}_{b,a;\gb-\ga} z_{j;\gb} + \gd_{\ga,\gb} z_{a+b;\ga}.
\end{equation*}
By the definition of $\gl$'s in \propref{prop:l2-expli} it is easy to see that
\begin{align*}
\gl^{1;N}_{a,b;\ga-\gb}z_{1;\ga} +\gl^{1;N}_{b,a;\gb-\ga} z_{1;\gb}
= &\, \frac{(-1)^{a-1}\om^N_{a+b-2;\ga-\gb}z_{1;\ga} + (-1)^{b-1}\om^N_{a+b-2;\gb-\ga}z_{1;\gb}}{(a-1)!(b-1)!} \\
= &\,
\frac{(-1)^{a-1}\om^N_{a+b-2;\ga-\gb}}{(a-1)!(b-1)!}\big(z_{1;\ga}-z_{1;\gb}\big).
\end{align*}
by \lemref{lem:oppositega}. Hence
$$[(z_{1;\ga}-z_{1;\gb})z_{s_1;\ga_1}\dots z_{s_d;\ga_d}]=g_\gb(\bfs;\bfga)-g_\ga(\bfs;\bfga) \in \qMZ_N.$$
This finishes the proof of the lemma.
\end{proof}

\begin{thm}\label{thm:qmz-subalgebra}
The vector space $\qMZ_N$ is a subalgebra of $\MD_N$.
\end{thm}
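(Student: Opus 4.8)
The plan is to lift everything to the quasi-shuffle algebra and exploit that $[\ \ ]\colon(\Q_N\langle A\rangle,\ast)\to(\MD_N,\cdot)$ is a $\Q_N$-algebra homomorphism (\propref{prop:hoffmannalgebra}). Every element of the generating set $S_N$ of $\qMZ_N$ equals $[G]$ for a ``lift'' $G\in\Q_N\langle A\rangle$ of one of two shapes: either a word $G=z_{s_1;\ga_1}\cdots z_{s_d;\ga_d}$ with $s_1>1$, for which $[G]=[\bfs;\bfga]$; or $G=(z_{1;0}-z_{1;\mu})\,z_{s_1;\ga_1}\cdots z_{s_d;\ga_d}$ with $1\le\mu<N$ and $d\ge 0$, for which $[G]=[1,\bfs;0,\bfga]-[1,\bfs;\mu,\bfga]=g_\mu(\bfs;\bfga)$ (the case $\bfs=\emptyset$ being allowed). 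Since $[\ \ ]$ is multiplicative and $\qMZ_N=\langle S_N\rangle_{\Q_N}$, by bilinearity it suffices to show $[G]\cdot[G']=[G\ast G']\in\qMZ_N$ for any two lifts $G,G'$. I would write $G=P\,w$ and $G'=P'\,v$, where $w,v$ are (possibly empty) words and each of $P,P'$ is either some $z_{j;\ga}$ with $j>1$ or some difference $z_{1;0}-z_{1;\mu}$.

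Next I would apply the recursive quasi-shuffle rule \eqref{defn:stuffle} to $G\ast G'$, extending it bilinearly in the heads $P,P'$ (which is permissible since $\ast$ and $\diamond$ are bilinear):
\[
P\,w\ast P'\,v \;=\; P\,(w\ast P'\,v)\;+\;P'\,(P\,w\ast v)\;+\;(P\diamond P')(w\ast v).
\]
Now apply $[\ \ ]$ to the three summands. The third, $(P\diamond P')(w\ast v)$, is a $\Q_N$-combination of terms $(z_{a;\ga}\diamond z_{b;\gb})\,u$ with $u$ a word, and $[(z_{a;\ga}\diamond z_{b;\gb})\,u]\in\qMZ_N$ by \lemref{lem:diamond}. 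In the first, $w\ast P'\,v$ is a $\Q_N$-combination of words $u$; for each such $u$, $[P\,u]=[s_1,\dots;\ga_1,\dots]\in S_N$ when $P=z_{s_1;\ga_1}$ with $s_1>1$, and $[P\,u]=g_\mu(\dots)\in S_N$ when $P=z_{1;0}-z_{1;\mu}$, so $[P\,(w\ast P'\,v)]\in\qMZ_N$. The second summand is symmetric. Hence $[G\ast G']\in\qMZ_N$ and the theorem follows.

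I expect essentially all of the real content to be hidden in \lemref{lem:diamond}, which is already available. The quasi-shuffle of two admissible words generically produces lower-weight contraction terms through $\diamond$, and among these the weight-one ones have the non-admissible shape $[1,\bfs;\gb,\bfga]$, which is not a priori in $\qMZ_N$. The reason it still works is the parity symmetry $\om^N_{n;\ga}=-(-1)^n\om^N_{n;-\ga}$ of \eqref{equ:om(-1)^nom}, which forces the weight-one contributions of $z_{a;\ga}\diamond z_{b;\gb}$ to assemble into differences $z_{1;\ga}-z_{1;\gb}$ (this is exactly the computation in the proof of \lemref{lem:diamond}), and such differences go to $g_\gb(\bfs;\bfga)-g_\ga(\bfs;\bfga)\in\qMZ_N$ precisely because $\qMZ_N$ was defined using the differences $g_\mu$ rather than the individual brackets $[1,\bfs;\mu,\bfga]$. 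With that granted, the only remaining care is purely clerical: tracking the three-term expansion when the ``head'' of a $g_\mu$-type lift is the two-letter element $z_{1;0}-z_{1;\mu}$ rather than a single letter.
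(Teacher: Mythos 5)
Your proof is correct and follows essentially the same route as the paper's: lift to the quasi-shuffle algebra, expand via \eqref{defn:stuffle}, absorb the $\diamond$-contractions with \lemref{lem:diamond}, and recognize the remaining terms as $\Q_N$-combinations of elements of $S_N$. The only difference is cosmetic --- you treat the head $P$ bilinearly as either a letter $z_{s;\ga}$ with $s>1$ or a difference $z_{1;0}-z_{1;\mu}$, which collapses the paper's three cases (admissible--admissible, admissible--$g$, $g$--$g$) into one.
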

\begin{proof}
It suffices to show that $\qMZ_N$ is closed under multiplication.
We need to consider three cases:
\begin{enumerate}
\item[\upshape(i)] Let $f=[a,\dots;\ga,\dots]$ and $g=[b,\dots;\gb,\dots]$, $a>1$ and $b>1$,

\item[\upshape(ii)]  Let $f=[a,\dots;\ga,\dots]$ and $g=g_\gb(\bfs;\bfga)$, $a>1$ and $1\le \gb<N$,

\item[\upshape(iii)]  Let $f=g_\gam(\bfs;\bfga)$ and $g=g_\gb(\bft;\bfgb)$, $1\le \gb,\gam<N$.
\end{enumerate}

In case (i), by \propref{prop:hoffmannalgebra} we have
\begin{equation*}
f \cdot g = [z_{a;\ga} w] \cdot [z_{b;\gb} v] = [ z_{a;\ga} w \ast z_{b;\gb} v ]
\quad\text{for some }w,v \in \Q_N\langle A \rangle.
\end{equation*}
So in order to prove the statement we have to show that $[z_{a;\ga} w \ast z_{b;\gb} v]\in \qMZ_N$.
By the definition of the quasi-shuffle product $\ast$ we have
\begin{equation*}
z_{a;\ga} w \ast z_{b;\gb} v = z_{a;\ga}(w \ast z_{b;\gb} v)
+ z_{b;\gb}(z_{a;\ga} w \ast v) + (z_{a;\ga} \diamond z_{b;\gb})(w \ast v).
\end{equation*}
The first two terms obviously lie in $\qMZ_N$ after applying $[\ ]$ since $a,b>1$
and by \lemref{lem:diamond} $[(z_{a;\ga} \diamond z_{b;\gb})(w \ast v)] \in \qMZ_N$.
Thus in case (i) the multiplication is closed.

(ii) We assume for some $w,v \in \Q_N\langle A \rangle$
\begin{equation*}
f \cdot g = [z_{a;\ga} w] \cdot [(z_{1;0}-z_{1;\gb}) v]
= [ z_{a;\ga} w \ast z_{1;0} v ]-[ z_{a;\ga} w \ast z_{1;\gb} v ].
\end{equation*}
Then
\begin{multline*}
 z_{a;\ga} w \ast z_{1;0} v- z_{a;\ga} w \ast z_{1;\gb} v
= z_{a;\ga}(w \ast z_{1;0} v)-z_{a;\ga}(w \ast z_{1;\gb} v)\\
+ (z_{a;\ga} \diamond z_{1;0})(w \ast v)- (z_{a;\ga} \diamond z_{1;\gb})(w \ast v)
+ (z_{1;0}-z_{1;\gb})(z_{a;\ga} w \ast v).
\end{multline*}
By similar argument as in case (i) one can handle easily the first four terms on the right
hand side. Now assume $[z_{a;\ga} w \ast v]= \sum_{i=1}^r [\bfs_i;\bfa_i]$ we see that
$$[(z_{1;0}-z_{1;\gb})(z_{a;\ga} w \ast v) ] = \sum_{i=1}^r g_\gb(\bfs_i;\bfa_i)\in \qMZ_N.$$

(iii) Assuming $f=[(z_{1;0}-z_{1;\gb}) w]$ and $g=[(z_{1;0}-z_{1;\gam}) v]$ for
$w,v \in \Q\langle A \rangle$ we have
\begin{equation*}
f \cdot g = [ z_{1;0} w \ast z_{1;0} v ]-[ z_{1;0} w \ast z_{1;\gam} v ]
- [ z_{1;\gb} w \ast z_{1;0} v ]+[ z_{1;\gb} w \ast z_{1;\gam} v ].
\end{equation*}
Then
\begin{align*}
   &\,   z_{1;0} w \ast z_{1;0} v - z_{1;0} w \ast z_{1;\gam} v
- z_{1;\gb} w \ast z_{1;0} v + z_{1;\gb} w \ast z_{1;\gam} v\\
= &\, (z_{1;0}-z_{1;\gb})(w \ast z_{1;0} v)-(z_{1;0}-z_{1;\gb})(w \ast z_{1;\gam} v) \\
+&\,  (z_{1;0} \diamond z_{1;0})(w \ast v)- (z_{1;0} \diamond z_{1;\gam})(w \ast v) \\
+&\, (z_{1;0}-z_{1;\gam})(z_{1;0} w \ast v)- (z_{1;\gb} \diamond z_{1;0})(w \ast v) \\
+&\, (z_{1;\gb} \diamond z_{1;\gam})(w \ast v) -(z_{1;0}-z_{1;j})(z_{1;i} w \ast v) ).
\end{align*}
By the same argument as in (i), all the terms involving $\diamond$ operation lie in $\qMZ_N$
after applying $[ \ \ ]$. All the other terms have the form $g_\gb(\bfs;\bfga)$ or $g_\gam(\bfs;\bfga)$
after applying $[ \ \ ]$. This finishes the proof of the theorem.
\end{proof}

\begin{thm} \label{thm:polyadalg}
For any positive integer $N$ we set $t_N=\sum_{\ga=0}^{N-1} [1;\ga]$.
\begin{enumerate}
\item[\upshape(i)] We have  $\MD_N = \qMZ_N[t_N]$.
\item[\upshape(ii)]  The algebra $\MD_N$ is a polynomial ring over $\qMZ_N$ with indeterminate $t_N$, i.e. $\MD_N$ is isomorphic to $\qMZ_N[T]$ by sending $t_N$ to $T$.
\end{enumerate}
\end{thm}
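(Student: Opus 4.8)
The inclusion $\qMZ_N[t_N]\subseteq\MD_N$ is immediate from \thmref{thm:qmz-subalgebra} (and $t_N\in\MD_N$), so the content of (i) is the reverse inclusion. First I would record the base reduction: since $\sum_{\gb=1}^{N-1}g_\gb(\emptyset;\emptyset)=N[1;0]-t_N$, we have $[1;0]=\frac1N\big(t_N+\sum_{\gb=1}^{N-1}g_\gb(\emptyset;\emptyset)\big)\in\qMZ_N[t_N]$ and $[1;\gb]=[1;0]-g_\gb(\emptyset;\emptyset)\in\qMZ_N[t_N]$; in particular $\qMZ_N[t_N]=\qMZ_N[[1;0]]$ and every MDF of depth $\le 1$ lies in it. Then the plan is to prove that \emph{every} MDF lies in $\qMZ_N[t_N]$ by a double induction: the outer induction is on $m$, the number of leading entries of the index vector equal to $1$, and the inner induction is on the depth $d$. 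The case $m=0$ is trivial, since then the index vector starts with an entry $\ge 2$ and the MDF is one of the generators of $\qMZ_N$.

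For the inductive step I would fix $\bfk=(\{1\}^m,\bfs)$ with $m\ge 1$ and $\bfs$ either empty or starting with an entry $\ge 2$, fix the tail colors $\bfga'$, and regard the $N^m$ MDFs $[\bfk;\bfgb,\bfga']$ for $\bfgb\in(\Z/N\Z)^m$ as unknowns. I would then write down two families of relations, exactly parallel to those used in Definition~\ref{defn:regularize} for the $\ast$-regularized MZVs at level $N$: \textbf{(A)} the $N^{m-1}$ identities obtained by expanding $t_N\cdot[\{1\}^{m-1},\bfs;\bfgb',\bfga']=\sum_{\gam=0}^{N-1}[1;\gam]\cdot[\{1\}^{m-1},\bfs;\bfgb',\bfga']$ through the quasi-shuffle product of \propref{prop:hoffmannalgebra}; here the shuffle terms that keep $\ge m$ leading $1$'s are exactly the unknowns (arranged in sums over the leading colors), while all other terms already lie in $\qMZ_N[t_N]$ --- the remaining shuffle terms have $<m$ leading $1$'s (outer induction), the $\diamond$-contraction terms have depth $d-1$ and by \lemref{lem:diamond} their index-$1$ components collapse to $\Q_N$-multiples of the $g$'s, hence fall under the inner induction, and the left-hand side is $t_N$ times an MDF of depth $d-1$, hence in $\qMZ_N[t_N]$ by induction; and \textbf{(B)} the relations $g_\gb(\{1\}^{m-1},\bfs;\bfgb',\bfga')=[\{1\}^m,\bfs;0,\bfgb',\bfga']-[\{1\}^m,\bfs;\gb,\bfgb',\bfga']\in\qMZ_N$. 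Combining (A) and (B) precisely as in Definition~\ref{defn:regularize} produces a square linear system for the unknowns whose coefficient matrix is $M(N,m-1)$ --- the matrix only records the shuffle part of the products, which is combinatorially identical on the two sides. Since $M(N,m-1)$ is nonsingular (Definition~\ref{defn:regularize}) with inverse over $\Q\subseteq\Q_N$, every $[\bfk;\bfgb,\bfga']$ is a $\Q_N$-linear combination of elements already known to lie in $\qMZ_N[t_N]$, completing (i).

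For (ii) it remains to show that $t_N$ is transcendental over $\qMZ_N$, equivalently that the surjection $\qMZ_N[T]\to\MD_N$, $T\mapsto t_N$, is injective. My plan is to carry the depth filtration through the argument above: because $t_N\in\fille_1$ and the step at level $(m,d)$ produces $[\bfk;\bfgb,\bfga']$ out of $t_N$ times an MDF of depth $d-1$, the $g$'s, and terms of lower $m$ or lower depth, one obtains $\fille_d(\MD_N)=\sum_{i\ge 0}\fille_{d-i}(\qMZ_N)\,t_N^i$; passing to the associated graded $\grl$ and invoking the invertibility of the matrices $M(N,\cdot)$ at each stage (which is exactly what prevents any collapse) shows this sum is direct, so that $1,t_N,t_N^2,\dots$ form a $\qMZ_N$-basis of $\MD_N$. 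I expect the main obstacle to be precisely the directness of this sum: the bookkeeping in (i) is routine though lengthy, while ruling out a nontrivial relation $\sum_i c_i t_N^i=0$ with $c_i\in\qMZ_N$ genuinely requires the graded structure. An alternative route to the same conclusion is to pass to the highest-weight quotient: there the top-weight part of a product of MDFs is multiplicative, $\qMZ_N$ maps into the $T$-free algebra of $\ast$-regularized MZVs at level $N$ and $t_N$ maps to a weight-one regularized value with nonzero $T$-coefficient, so no algebraic relation over $\qMZ_N$ survives, and an induction on weight then shows none can exist.
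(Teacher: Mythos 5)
Your argument for part (i) is essentially the paper's own proof: the same double induction (on the depth and on the number $m$ of leading $1$'s, with the two layers merely swapped), the same two families of relations coming from expanding $t_N\cdot[\{1\}^{m-1},\bfs;\bfgb',\bfga']$ via the quasi-shuffle product and from the identities $g_\gb=[1,\dots;0,\dots]-[1,\dots;\gb,\dots]$, and the same reduction to the nonsingularity of the matrix $M(N,\cdot)$ established in Definition~\ref{defn:regularize}. Up to the indexing shift $M(N,m)$ versus $M(N,m-1)$, this part is correct and matches the paper.

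Part (ii) is where the proposal has a genuine gap. The invertibility of the matrices $M(N,\cdot)$ only shows that the powers of $t_N$ \emph{generate} $\MD_N$ over $\qMZ_N$; it gives no information about a possible relation $\sum_i c_i t_N^i=0$, and you yourself flag the directness of the sum as the unresolved obstacle. Your alternative route is circular: the evaluation map sending $[\bfs;\bfga]t_N^r\mapsto \zeta_N(\bfs;\bfga)T^r$ (\thmref{thm:Zk}(ii)) is only well defined once one already knows that $\MD_N$ is a free $\qMZ_N$-module on the powers of $t_N$, i.e.\ part (ii) itself; and even granting such a map, it kills all lower-weight components (and the image of $\dif$), so $\sum_i Z(c_i)T^i=0$ does not force $c_i=0$. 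The paper closes this gap analytically: by Pupyrev's lemma, as $q\to\eta^{-1}$ one has $t_N\approx -\log(1-\eta q)/(1-\eta q)$, whereas every weight-$k$ generator of $\qMZ_N$ behaves like $C/(1-\eta q)^k$ with $C$ finite (this is where \propref{prop:Zkexpli} and the finiteness of the $\gG_\gb(\bfs;\bfga)$ enter). A nontrivial polynomial relation over $\qMZ_N$ would then produce an uncancellable power of $\log(1-\eta q)$, which is impossible. Some such asymptotic (or otherwise genuinely new) input is needed; neither of your two suggested routes supplies it.
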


\begin{proof}
(i) We will use the induction approach used by Bachmann and K\"uhn at level one.
But for the higher levels the appearance of different colors brings some complications
into our argument.

First we show that any $f \in \filw_{k}(\MD_N)$ can be written as a polynomial in $t_N$.
If we show that for a fixed $d$ and $f \in \filwle_{k,d}(\MD_N)$ one can find
$g_1 \in \filwle_{k,d}(\qMZ_N)$ and $g_2,g_3 \in \filwle_{k,d-1}(\MD_N)$ such that
$f$ can be written as
\begin{equation} \label{eq:polyrep}
f = g_1 + t_N\cdot g_2 + g_3,
\end{equation}
then the claim follows directly by induction on $d$.

To show \eqref{eq:polyrep} it suffices to assume
$f=[\{1\}^m, \bfs;\gb_1,\dots,\gb_m,\bfga]$, with $s_1 > 1$ and $k=m+|\bfs|$.
By induction on $m$ we now prove that every element of such form can be written
as in \eqref{eq:polyrep}. Notice we are using a double induction now with the outside layer
induction on $d$ and the inside on $m$.
If $m=0$ then clearly $f \in \filwle_{k,d}(\qMZ_N)$.
If $m=1$, $s_1>1$ and $\gb_1=0$ then we have
\begin{align*}
 &\,  -N[1,\bfs;0,\bfga]+\sum_{\gam=1}^{N-1} g_\gam(\bfs;\bfga)+t_N\cdot[\bfs;\bfga] \\
=&\, \sum_{\gam=0}^{N-1}\Big([1;\gam]\cdot [\bfs;\bfga]- [1,\bfs;\gam,\bfga]\Big)\\
=&\, \sum_{\gam=0}^{N-1}\Big[z_{1;\gam}*(z_{s_1;\ga_1}\dots z_{s_n;\ga_n})- z_{1;\gam}z_{s_1;\ga_1}\dots z_{s_n;\ga_n}\Big] \quad (n=d-1)\\
=&\, \sum_{\gam=0}^{N-1}\Big[z_{s_1;\ga_1}\big(z_{1;\gam}*(z_{s_2;\ga_2}\dots z_{s_n;\ga_n})\big)
    -(z_{1;\gam}\diamond z_{s_1;\ga_1}) (z_{s_2;\ga_2}\dots z_{s_n;\ga_n})\Big]\in \qMZ_N
\end{align*}
by \lemref{lem:diamond}. Hence $[1,\bfs;0,\bfga]\in \qMZ_N[t_N]$ and therefore for all $\gb\in\Z/N\Z$
\begin{equation*}
[1,\bfs;\gb,\bfga]=[1,\bfs;0,\bfga]-g_\gb(\bfs;\bfga)\in\qMZ_N[t_N].
\end{equation*}

Now we assume $m\ge 1$ and \eqref{eq:polyrep} holds for $f=[\{1\}^m,\bfs;\bfgb',\bfga]$
for all $\bfs\in \N^{d-m}$ with $s_1>1$ and all $\bfgb'=(\gb_1,\dots,\gb_m)\in(\Z/N\Z)^m$.
Then we have
\begin{equation*}
     t_N\cdot [\{1\}^m,\bfs;\bfgb',\bfga]
     =\sum_{\gam=0}^{N-1} \Big[z_{1,\gam} * \big(z_{1,\gb_1}\dots z_{1,\gb_m} z_{s_1;\ga_1}\dots z_{s_d;\ga_d} \big)  \Big].
\end{equation*}
Thus by induction
\begin{equation*}
\sum_{\ell=0}^{m}\sum_{\gam=0}^{N-1} [\{1\}^{m+1},\bfs;\gb_1,\dots,\gb_\ell,\gam,\gb_{\ell+1},\dots,\gb_m,\bfga]
\in \qMZ_N[t_N].
\end{equation*}
On the other hand
\begin{equation*}
\sum_{\gam=0}^{N-1} g_\gam(\{1\}^m,\bfs;\bfgb',\bfga) =N[\{1\}^{m+1},\bfs;0,\bfgb',\bfga]
-\sum_{\gam=0}^{N-1} [\{1\}^{m+1},\bfs;\gam,\bfgb',\bfga].
\end{equation*}
Adding up the above two equations we get the following element in $\qMZ_N[t_N]$
\begin{equation*}
N[\{1\}^{m+1},\bfs;0,\bfgb',\bfga]
+\sum_{\ell=1}^{m}\sum_{\gam=0}^{N-1} [\{1\}^{m+1},\bfs;
    \gb_1,\dots,\gb_\ell,\gam,\gb_{\ell+1},\dots,\gb_m,\bfga].
\end{equation*}
Subtracting $N g_{\gb_0}(\{1\}^m,\bfs;\bfgb',\bfga)$ we see that for all
$\bfgb=(\gb_0,\dots,\gb_m)\in(\Z/N\Z)^{m+1}$
\begin{equation*}
N[\{1\}^{m+1},\bfs;\bfgb,\bfga]
+\sum_{\ell=1}^{m}\sum_{\gam=0}^{N-1} [\{1\}^{m+1},\bfs;\gb_1,\dots,\gb_\ell,\gam,\gb_{\ell+1},\dots,\gb_m,\bfga]
=b(\bfgb),
\end{equation*}
for some $b(\bfgb)\in \qMZ_N[t_N]$.
Regarding $[\{1\}^{m+1},\bfs;\bfgb,\bfga]$ as $N^{m+1}$ variables as $\bfgb$ varies
we now only need to show the corresponding $N^{m+1}\times N^{m+1}$ coefficient
matrix $M(N,m)$ is nonsingular, which has been proved by \eqref{equ:MatrixMNm}.

(ii) We only need to show that $t_N$ is transcendental over $\qMZ_N$. By definition
\begin{align*}
t_N=\sum_{\ga=0}^{N-1} [1;\ga]
=&\, \sum_{\ga=0}^{N-1} \sum_{n=1}^\infty \sum_{d|n} \eta^{d\ga} q^n \\
=&\, \sum_{n=1}^\infty \sum_{d|n} N q^{Nn}  \approx \frac{-N\log(1-q^N)}{1-q^N} \quad (\text{when $q$ is close to }\eta^{-1})
\end{align*}
by \cite[Lemma~2]{Pupyrev2005}. Now it is an easy computation to show that when $q$ is close to $\eta^{-1}$
$$ \frac{-N\log(1-q^N)}{1-q^N} \approx \frac{-\log(1-\eta q)}{1-\eta q}.$$
By \propref{prop:Zkexpli} we see that if $|\bfs|=k>1$
we have the approximations $[\bfs;\bfga]\approx \frac{1}{(1-\eta q)^k}$ when $q$ close to $\eta^{-1}$.
Also by \propref{prop:Zkexpli}, if $|\bfs|=k-1$ then
$$\lim_{q\to 1} (1-q)^k g_\gb(\bfs;\bfga)[q\eta^{-1}]=\gG_\gb(\bfs;\bfga)$$
which is finite by the definition \eqref{equ:gG_gb}, and therefore when $q$ is close to $\eta^{-1}$
$g_\gb(\bfs;\bfga)\approx \frac{1}{(1-\eta q)^k}.$
Consequently, $t_N$ is transcendental over $\qMZ_N[T]$.

We have completed the proof of the theorem.
\end{proof}

The following result is a simple generalization of \cite[Lemma 2.5]{BachmannKu2013}.
\begin{lem} \label{lem:eulerpol}
For $s_1,\dots,s_d \in \N$ and $\ga_1,\dots,\ga_d\in \Z/N\Z$ we have
\begin{equation*}
[s_1,\dots,s_d;\ga_1,\dots,\ga_d ](q) =
\frac{1}{(s_1-1)! \dots (s_l-1)!} \sum_{n_1 > \dots > n_d > 0} \prod_{j=1}^d
\frac{\eta^{\ga_j} q^{n_j} P_{s_j-1}\left( \eta^{\ga_j} q^{n_j} \right)}{(1-\eta^{\ga_j} q^{n_j})^{s_j}},
\end{equation*}
where $P_k(t)= \sum_{n=0}^{k-1} A_{k,n} t^n$ is the $k$-th Eulerian polynomial with
the coefficient $A_{k,n}$ defined by
\begin{equation*}
A_{k,n} = \sum_{i=0}^n (-1)^i \binom{k+1}{i} (n+1-i)^k,
\end{equation*}
which is positive.
\end{lem}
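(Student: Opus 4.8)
The plan is to reduce the statement to a one-variable identity for the normalized polylogarithm $\tLi_s$ and then feed it into Proposition~\ref{prop:ExpressMDFbyLi}. First I would prove that for every $s\in\N$ and $|z|<1$,
\begin{equation*}
\tLi_s(z)=\frac{1}{(s-1)!}\cdot\frac{z\,P_{s-1}(z)}{(1-z)^s},
\end{equation*}
where $P_k(t)=\sum_{n=0}^{k-1}A_{k,n}t^n$ is the $k$-th Eulerian polynomial, with the convention $P_0(t)=1$. Since $\tLi_s(z)=\Li_{1-s}(z)/\Gamma(s)=\frac{1}{(s-1)!}\sum_{n\ge1}n^{s-1}z^n$, this is exactly the classical generating-function identity $\sum_{n\ge1}n^{k}z^n=z\,P_k(z)/(1-z)^{k+1}$.

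For the one-variable identity I would induct on $s$ using the Euler operator $\theta=z\,\frac{d}{dz}$. The base case is the geometric series $\tLi_1(z)=\frac{z}{1-z}$, i.e.\ $P_0=1$. Differentiating termwise gives $s\,\tLi_{s+1}(z)=\theta\,\tLi_s(z)$, so it suffices to verify that $\theta\!\left(\dfrac{z\,P_k(z)}{(1-z)^{k+1}}\right)=\dfrac{z\,P_{k+1}(z)}{(1-z)^{k+2}}$ whenever $P_{k+1}(z)=(1+kz)P_k(z)+z(1-z)P_k'(z)$. This last relation is the standard recurrence for Eulerian polynomials, and the closed form $A_{k,n}=\sum_{i=0}^n(-1)^i\binom{k+1}{i}(n+1-i)^k$ is one of its classical solutions (via Worpitzky's identity, or via the Pascal-type recurrence $A_{k,n}=(n+1)A_{k-1,n}+(k-n)A_{k-1,n-1}$; the latter also makes the positivity of $A_{k,n}$ for $0\le n\le k-1$ immediate by induction, or one simply recalls that $A_{k,n}$ counts the permutations of $\{1,\dots,k\}$ with exactly $n$ descents).

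Then, starting from the identity
\begin{equation*}
[s_1,\dots,s_d;\ga_1,\dots,\ga_d]=\sum_{n_1>\dots>n_d>0}\tLi_{s_1}(\eta^{\ga_1}q^{n_1})\cdots\tLi_{s_d}(\eta^{\ga_d}q^{n_d})
\end{equation*}
of Proposition~\ref{prop:ExpressMDFbyLi}, I would substitute the one-variable formula with $z=\eta^{\ga_j}q^{n_j}$ into each factor; since $|\eta^{\ga_j}q^{n_j}|<1$ this is legitimate and the resulting nested sum still converges absolutely for $|q|<1$. Collecting the factorials gives precisely the asserted formula.

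The point worth emphasizing is that the ``level $N$'' content here is essentially vacuous: once Proposition~\ref{prop:ExpressMDFbyLi} is available, the roots of unity enter only through the harmless substitution $z\mapsto\eta^{\ga_j}q^{n_j}$, and the real computation is identical to the level-one case \cite[Lemma~2.5]{BachmannKu2013}. The only genuinely non-formal ingredient is the classical power-sum/Eulerian-polynomial identity of the second step, which is standard; the remaining care is purely bookkeeping, namely fixing the $P_0=1$ convention so that the $s=1$ case $\tLi_1(z)=z/(1-z)$ is covered by the formula, and confirming that the displayed closed form for $A_{k,n}$ is indeed the Eulerian number.
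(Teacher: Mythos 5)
Your proof is correct and follows essentially the same route as the paper: both reduce the lemma to the classical identity $\sum_{v\ge1}v^{k}z^{v}=zP_k(z)/(1-z)^{k+1}$ applied factorwise with $z=\eta^{\ga_j}q^{n_j}$ via Proposition~\ref{prop:ExpressMDFbyLi}, the only difference being that you prove that identity by induction with the Euler operator while the paper simply cites it. Your remark about the $P_0=1$ convention is a worthwhile clarification of a small gap in the paper's statement.
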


\begin{proof}
By \propref{prop:ExpressMDFbyLi} and the properties of Eulerian polynomial
(see \cite{OEIS,Foata2010}) we have
\begin{equation*}
\sum_{n_1 > \dots > n_d > 0} \prod_{j=1}^d
\frac{\eta^{\ga_j}q^{n_j} P_{s_j-1}\left( \eta^{\ga_j}q^{n_j} \right)}{(1-\eta^{\ga_j}q^{n_j})^{s_j}}
= \sum_{n_1>\dots>n_d>0}\prod_{j=1}^d\sum_{v_j=1}^\infty \eta^{v_j\ga_j} v_j^{s_j-1} q^{v_j n_j}
= \sum_{n=1}^\infty \gs_{\bfs-\bfone}^{\bfga}(n) q^n.
\end{equation*}
The positivity of $A_{k,n}$ follows from its combinatorial interpretation as
the number of permutations of $\{1,2,..,n\}$ with $k$ ascents (see \cite{OEIS}
or search the sequence A008292 on oeis.org). This is also proved by Pupyrev \cite[Lemma~1]{Pupyrev2005}.
The lemma follows at once.
\end{proof}

\begin{thm} \label{thm:Zk}
{\rm (i)} The $\Q_N$-linear map $Z: \qMZ_N \to \MZ_N$ defined by
 \begin{align*}
 Z([\bfs;\bfga]) &\, = \zeta_{N}(\bfs;\bfga) \quad \forall s_1>1,\\
 Z(g_\gb(\bfs;\bfga)) &\, =\gG_\gb(\bfs;\bfga) \quad \forall \gb\in\Z/N\Z
\end{align*}
is a homomorphism of weight-filtered algebras. It can be factored through the weight-graded object
$(\ol{\qMZ_N},*)$:
\begin{equation*}
\xymatrix{\qMZ_N \ar[r]\ar[dr]_Z &  \overline{\qMZ_N} \ar[d]^{\ol{Z}} \\   & \MZ_N}
\end{equation*}
where the multiplication $*$ of $\overline{\qMZ_N}$
is inherited from that of $\qMZ_N$. Moreover, $(\ol{\qMZ_N},*)$
satisfies the stuffle relations and $\ol{Z}$ induces a $\Q$-algebra homomorphism
 $$\ol{Z}_\Q:(\ol{\qMZ_N}(\Q),*)\lra (\MZ_N(\Q),\cdot).$$

{\rm (ii)} Extending $Z$ by setting
  \begin{align*}
 Z([\bfs;\bfga]_N t_N^r) &= \zeta_{N}(\bfs;\bfga)T^r
\end{align*}
we then obtain a $\Q_N$-algebra homomorphisms
\begin{equation*}
\xymatrix{\MD_N = \qMZ_N[t_N] \ar[r]\ar[dr]_Z &  \overline{\qMZ_N} [t_N] \ar[d]^{\ol{Z}} \\   & \MZ_N[T],}
\end{equation*}
such that
\begin{equation*}
Z([1,s_2,\dots,s_d;\ga_1,\dots,\ga_d])=\zeta^\ast_{N}(1,s_2,\dots,s_d;\ga_1,\dots,\ga_d)
\end{equation*}
as defined in section \ref{sec:regularizationMZV}. Moreover, $\ol{Z}$ induces a $\Q$-algebra homomorphism
 $$\ol{Z}_\Q: \overline{\qMZ_N}(\Q)[t_N] \lra \MZ_N(\Q)[T].$$
\end{thm}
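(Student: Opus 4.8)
The plan is to build $\ol{Z}$ (the map on the weight-graded object) as the genuine algebra homomorphism and to recover $Z$ by precomposing with a natural leading-weight map. The one analytic ingredient is the behaviour of the multiple divisor functions as $q\to\eta^{-1}$: by \lemref{lem:eulerpol} together with the expansion \propref{prop:Zkexpli}, for every generator $v\in S_N$ of weight $w$ one has $\lim_{q\to1}(1-q)^{w}\,v(q\eta^{-1})=Z(v)$, where $Z([\bfs;\bfga])=\zeta_N(\bfs;\bfga)$ if $s_1>1$ and $Z(g_\gb(\bfs;\bfga))=\gG_\gb(\bfs;\bfga)$, the latter being finite by \eqref{equ:gG_gb}. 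I would isolate this as a lemma, since it is exactly the bridge from $q$-series identities to identities among regularized MZVs at level $N$. Next define $\pi\colon\qMZ_N\to\ol{\qMZ_N}$ by sending each generator $v\in S_N$ to its class in $\grw_{\wt(v)}$: this is a well-defined $\Q_N$-linear map because, for any relation $\sum_i c_iv_i=0$ among generators, the weight-$k$ piece $\sum_{\wt(v_i)=k}c_iv_i$ equals $-\sum_{\wt(v_i)<k}c_iv_i\in\filw_{k-1}(\qMZ_N)$ and hence has trivial class in $\grw_k$. Define $\ol{Z}$ on $\ol{\qMZ_N}$ componentwise by sending the class in $\grw_k$ of $f=\sum_{\wt(v_i)\le k}c_iv_i$ to $\sum_{\wt(v_i)=k}c_iZ(v_i)$; independence of the representative follows from the limit: if $f\in\filw_{k-1}$, write $f$ also as a combination of generators of weight $<k$, subtract, substitute $q\mapsto q\eta^{-1}$, multiply by $(1-q)^{k}$ and let $q\to1$, whereupon all terms of weight $<k$ vanish, leaving $\sum_{\wt(v_i)=k}c_iZ(v_i)=0$. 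Then $Z:=\ol{Z}\circ\pi$ has the stated effect on generators, preserves the weight filtration, and is the asserted factorization $\qMZ_N\to\ol{\qMZ_N}\to\MZ_N$; ``homomorphism of weight-filtered algebras'' means precisely that the induced map $\ol Z$ on the associated graded, treated next, is an algebra homomorphism.

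For the multiplicativity, recall from \thmref{thm:md-algebra} and \thmref{thm:qmz-subalgebra} that the product on $\qMZ_N$ is the quasi-shuffle built from $\diamond$, and from \eqref{equ:diamondDefn} that $z_{a;\ga}\diamond z_{b;\gb}\equiv\gd_{\ga,\gb}\,z_{a+b;\ga}$ modulo letters of strictly lower weight; hence the product $*$ that $\ol{\qMZ_N}$ inherits from $\qMZ_N$ is exactly the level-$N$ stuffle product, so $(\ol{\qMZ_N},*)$ satisfies the stuffle relations. Since the limit is multiplicative, $(1-q)^{k_1+k_2}(fg)(q\eta^{-1})=\bigl((1-q)^{k_1}f(q\eta^{-1})\bigr)\bigl((1-q)^{k_2}g(q\eta^{-1})\bigr)$, passing to leading terms yields $\ol{Z}(\bar f*\bar g)=\ol{Z}(\bar f)\,\ol{Z}(\bar g)$, so $\ol{Z}\colon(\ol{\qMZ_N},*)\to(\MZ_N,\cdot)$ is an algebra homomorphism. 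Finally, the generators $[\bfs;\bfga]$, $g_\gb(\bfs;\bfga)$ and all the structure constants $\gl^{j;N}_{a,b;\ga}$ (through the $\om^N_{n;\ga}\in\Q_N$ of \lemref{lem:omNgeneral}) are permuted compatibly by $G_N={\rm Gal}(\Q_N/\Q)$ on source and target, so $\ol{Z}$ is $G_N$-equivariant and restricts to the $\Q$-algebra homomorphism $\ol{Z}_\Q\colon(\ol{\qMZ_N}(\Q),*)\to(\MZ_N(\Q),\cdot)$.

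For part (ii), \thmref{thm:polyadalg} gives $\MD_N=\qMZ_N[t_N]\cong\qMZ_N[T]$, so $Z$ and $\ol{Z}$ extend uniquely to $\MD_N\to\MZ_N[T]$ and $\ol{\qMZ_N}[t_N]\to\MZ_N[T]$ by $t_N\mapsto T$. The remaining point is $Z([1,s_2,\dots,s_d;\ga_1,\dots,\ga_d])=\zeta^\ast_N(1,s_2,\dots,s_d;\ga_1,\dots,\ga_d)$, which I would prove by induction on the number of leading $1$'s. The identities expressing $[\{1\}^{m+1},\bfs;\bfgb,\bfga]$ inside $\qMZ_N[t_N]$, derived line by line in the proof of \thmref{thm:polyadalg}(i) from the stuffle products with the $[1;\gam]$ and the $g_\gam$'s, are the exact MDF avatars of the stuffle identities used in \defref{defn:regularize} to define $\zeta^\ast_N$; applying the algebra homomorphism $\ol Z$ (which sends $[\bfs;\bfga]\mapsto\zeta_N(\bfs;\bfga)$, $g_\gb(\bfs;\bfga)\mapsto\gG_\gb(\bfs;\bfga)$ and $t_N\mapsto T$) turns the former term by term into the equations defining $\zeta^\ast_N$. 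Both families of unknowns, $[\{1\}^{m+1},\bfs;\bfgb,\bfga]$ and $\zeta^\ast_N(\{1\}^{m+1},\bfs;\bfgb,\bfga)$, are then recovered from the same right-hand data by inverting the same matrix $M(N,m)$, which is nonsingular with inverse over $\Q$ by \eqref{equ:MatrixMNm}, so the two agree. The $\Q$-statement for $\ol{Z}_\Q$ on $\ol{\qMZ_N}(\Q)[t_N]$ again follows from $G_N$-equivariance.

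The hardest part will be the bookkeeping forced by the mixed-weight nature of the product: one must verify that $(1-q)^k(\,\cdot\,)(q\eta^{-1})$ genuinely isolates the top-weight contribution and nothing else — which rests on the precise shape of \propref{prop:Zkexpli}, namely that the lower-order poles of an MDF at $q=\eta^{-1}$ carry level-$N$ MZVs of strictly smaller weight with rational coefficients — and, in part (ii), that the recursion of \defref{defn:regularize} and the recursion buried in the proof of \thmref{thm:polyadalg}(i) really produce the identical linear system $M(N,m)X=b$ at every depth, after which the nonsingularity in \eqref{equ:MatrixMNm} closes the argument.
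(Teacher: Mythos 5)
Your part (i) is sound and is essentially the paper's (very terse) argument made explicit: well-definedness and multiplicativity of $\ol{Z}$ come from the limit $Z_k(f)=\lim_{q\to 1^-}(1-q)^k f(q\eta^{-1})$ of Proposition~\ref{prop:Zkexpli}, the graded product is the stuffle because $z_{a;\ga}\diamond z_{b;\gb}\equiv \gd_{\ga,\gb}z_{a+b;\ga}$ modulo lower weight, and the $\Q$-structure comes from Galois equivariance (the paper instead invokes the rationality of $M(N,m)^{-1}$, but these amount to the same thing).

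In part (ii), however, the step ``the two recursions produce the identical linear system $M(N,m)X=b$'' does not hold literally, and this is a genuine gap. The coefficient matrices agree, but the right-hand sides do not: the MDF recursion in the proof of Theorem~\ref{thm:polyadalg}(i) is built from the quasi-shuffle product, whose $\diamond$-terms carry lower-weight corrections, whereas Definition~\ref{defn:regularize} uses the pure stuffle with only the $\gd_{\ga_j,\gb}$ merge terms. Concretely, for $N=2$ one has
\begin{equation*}
[1;0]\cdot[1;0]=2[1,1;0,0]+[2;0]-[1;0],\qquad
\zeta^\ast_2(1;0)^2=2\zeta^\ast_2(1,1;0,0)+\zeta_2(2;0),
\end{equation*}
so the extra term $-[1;0]$ survives if one applies the \emph{linear} extension $Z$ (which sends each generator, of whatever weight, to its nonzero value): writing $[1,1;0,0]$ in the basis of $\qMZ_2[t_2]$ and applying $Z$ term by term yields $\zeta^\ast_2(1,1;0,0)+\tfrac14(T+\gG_1)$, not $\zeta^\ast_2(1,1;0,0)$. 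The two systems $M(N,m)X=b$ therefore agree only modulo $\filw_{m+|\bfs|}$, and what your argument actually proves is the graded statement $\ol{Z}\bigl(\overline{[1,\bfs;\bfgb,\bfga]}\bigr)=\zeta^\ast_N(1,\bfs;\bfgb,\bfga)$, where $\overline{\phantom{x}}$ denotes the top-weight class. To close the gap you must either (a) make precise that $Z$ on non-generators like $[\{1\}^{m+1},\bfs;\bfgb,\bfga]$ is \emph{defined} as $\ol{Z}$ of the top-weight class (which is the only reading under which the diagram $\MD_N\to\ol{\qMZ_N}[t_N]\to\MZ_N[T]$ makes the claimed identity true), or (b) verify that the lower-weight discrepancy vector is annihilated, which the example above shows is false for the naive linear extension. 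The paper's two-line proof does not address this either, so flagging and resolving the ambiguity would be a genuine improvement; as written, your ``term by term'' matching is the one step that would fail under scrutiny.
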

\begin{proof}
We only need to show the property on $\ol{Z}$. But this follows immediately from the fact that
the matrix $M(N,m)$ in the proof of \thmref{thm:polyadalg} has integer entries so that its inverse
has rational entries.
\end{proof}

For any $\bfs\in\N^d$  and $\bfga\in(\Z/N\Z)^d$ we define
\begin{alignat*}{3}
Z_k\big([\bfs;\bfga](q)\big) =&\, \lim_{q \to 1^-} (1-q)^{k} [\bfs;\bfga]\big(q\eta^{-1}\big)
  &  \quad & \forall |\bfs|\le k,\\
Z_k\big(g_\gb(\bfs;\bfga)\big) =&\, \lim_{q\to 1^-} (1-q)^k g_\gb(\bfs;\bfga)\big(q\eta^{-1}\big)
  &   \quad & \forall |\bfs|\le k-1.
\end{alignat*}
The next result shows that $Z_k$ is the restriction of $Z$ to $\filw_{k}(\qMZ_N)$,
\begin{prop}\label{prop:Zkexpli}
The map $Z_k$ is $\Q_N$-linear on $\filw_{k}(\qMZ)$ and
\begin{equation*}
Z_k\left( [\bfs;\bfga](q) \right) =
\left\{
  \begin{array}{ll}
   \zeta_N(\bfs;\bfga), & \hbox{if $s_1>1$ and $|\bfs|=k$;} \\
    \ \quad 0, & \hbox{if $|\bfs|<k$,}
  \end{array}
\right.
\end{equation*}
and for all $1\le \gb<N$
\begin{equation}\label{equ:Z_kg_gb}
Z_k\big(g_\gb(\bfs;\bfga)\big)
=
\left\{
  \begin{array}{ll}
   \gG_\gb(\bfs;\bfga), & \hbox{if $|\bfs|=k-1$;} \\
    \ \quad 0, & \hbox{if $|\bfs|< k-1$.}
  \end{array}
\right.
\end{equation}
\end{prop}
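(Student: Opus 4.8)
The plan is to reduce the entire statement to the asymptotics of the single building block $\tLi_s(\eta^{\ga}q^{n})$ under the substitution $q\mapsto q\eta^{-1}$, $q\to1^-$, and then feed these into the series representation of \propref{prop:ExpressMDFbyLi} and the rational form of \lemref{lem:eulerpol}. First note that the $\Q_N$-linearity of $Z_k$ costs nothing: every generator of $\filw_k(\qMZ_N)$---the $[\bfs;\bfga]$ with $s_1>1$ and the $g_\gb(\bfs;\bfga)$---is an honest $q$-series convergent on $|q|<1$, so any $\Q_N$-linear relation among generators is an identity of functions of $q$, and such identities survive multiplication by $(1-q)^k$ and the limit $q\to1^-$. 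Hence once the three limits below are shown to exist, $Z_k$ is automatically well defined and $\Q_N$-linear, and it suffices to evaluate $Z_k$ on the generators, generalizing the level-one argument of Bachmann and K\"uhn \cite{BachmannKu2013}.

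The key local fact is the following. Writing $\tLi_s(x)=xP_{s-1}(x)\big/\bigl((s-1)!\,(1-x)^s\bigr)$ as in \lemref{lem:eulerpol} and using $P_{s-1}(1)=(s-1)!$, one has, as $q\to1^-$,
\[
(1-q)^s\,\tLi_s\!\bigl(\eta^{\ga-n}q^{n}\bigr)\ \longrightarrow\
\begin{cases} n^{-s}, & n\equiv\ga\ppmod{N},\\[3pt] 0, & n\not\equiv\ga\ppmod{N},\end{cases}
\]
together with a uniform bound $(1-q)^s\bigl|\tLi_s(\eta^{\ga-n}q^{n})\bigr|\le C_s\,n^{-s}$, valid for all $n\ge1$ and all $q$ sufficiently close to $1$. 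The bound follows from $|\tLi_s(x)|\le (s-1)!\big/(1-|x|)^s$ and the elementary inequality $1-q^{n}\ge c\min\bigl(1,n(1-q)\bigr)$, treating separately the ranges $n(1-q)\lesssim1$ and $n(1-q)\gtrsim1$ (in the latter $q^{n}$ is exponentially small). Applying \propref{prop:ExpressMDFbyLi} gives $[\bfs;\bfga](q\eta^{-1})=\sum_{n_1>\dots>n_d>0}\prod_{j}\tLi_{s_j}\bigl(\eta^{\ga_j-n_j}q^{n_j}\bigr)$; when $s_1>1$ the product of the above bounds is summable, being bounded above by a constant times $\zeta(s_1,\dots,s_d)<\infty$, so dominated convergence applies termwise and only the tuples with $n_j\equiv\ga_j\ppmod{N}$ for every $j$ survive. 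One reads off $(1-q)^{|\bfs|}[\bfs;\bfga](q\eta^{-1})\to\zeta_N(\bfs;\bfga)$, which is the first assertion when $|\bfs|=k$; when $|\bfs|<k$ the surplus factor $(1-q)^{k-|\bfs|}$ tends to $0$ against a convergent quantity, giving the value $0$.

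For $g_\gb(\bfs;\bfga)$ I would work from the \emph{shifted} expressions \eqref{equ:defng_gb1} and \eqref{equ:defng_gb2} rather than the naive difference $[1,\bfs;0,\bfga]-[1,\bfs;\gb,\bfga]$: the point of those formulas is that after $q\mapsto q\eta^{-1}$ the would-be divergences line up, arising only from indices $n\equiv0\ppmod{N}$ in the $\tLi_1$-difference and from the single ``resonant'' value of $\ell$ in the finite correction sum. On those terms the differences telescope, $(1-q)\bigl(\tLi_1(q^{n})-\tLi_1(q^{n+\gb})\bigr)\to\tfrac1n-\tfrac1{n+\gb}$ (and likewise with $\gb-N$ in \eqref{equ:defng_gb2}), which is summable against the $\prod n_j^{-s_j}$ even when $s_1=1$. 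Running the same dominated-convergence argument identifies $\lim_{q\to1^-}(1-q)^{k}g_\gb(\bfs;\bfga)(q\eta^{-1})$, for $|\bfs|=k-1$, with $\gG_\gb(\bfs;\bfga)$ as defined in \eqref{equ:gG_gb} and \defref{defn:qMZN}---the case $d=0$ contributing the constant $-1/\gb$ from the $\ell=\gb$ term---while for $|\bfs|<k-1$ the same power counting forces the limit to be $0$.

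The step I expect to be the main obstacle is making the interchange of $\lim_{q\to1^-}$ with the nested infinite sums rigorous on the \emph{non-resonant} part. There the crude absolute bound on a single factor $\tLi_1(\eta^{\ga-n}q^{n})$ is only $O(1)$ and is \emph{not} summable in the outer index $n$; instead one must estimate the partial sums $\sum_{n>n_1}\tLi_1(\eta^{\ga-n}q^{n})$ directly---as a geometric-type series in the inner variable of $\tLi_1(z)=\sum_{m\ge1}z^{m}$---using that the resulting denominators $1-\eta^{-m}q^{m}$ stay bounded away from $0$ uniformly over the non-resonant residues $m$. A related bookkeeping subtlety: in the correction sum of \eqref{equ:defng_gb2} the $\tLi_1$ factor and the leading $\tLi_{s_1}$ factor share the index $n_1$ precisely when $\gb=\ga_1$, producing a ``diagonal'' contribution of weight $s_1+1$ that must be tracked, and this is exactly the place to check that the value of $Z_k(g_\gb(\bfs;\bfga))$ so obtained agrees with the stuffle-regularized $\zeta^\ast_N$ constructed in Section~\ref{sec:regularizationMZV}.
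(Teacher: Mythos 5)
Your proposal is correct and follows essentially the same route as the paper's proof: the Eulerian-polynomial representation of \lemref{lem:eulerpol}, termwise limits after $q\mapsto q\eta^{-1}$ that pick out the residue classes $n_j\equiv\ga_j\ppmod N$, dominated convergence obtained from $|1-\eta^{\ga-n}q^n|\ge 1-q^n$ together with the positivity of the Eulerian coefficients, and the shifted definitions \eqref{equ:defng_gb1}--\eqref{equ:defng_gb2} with an $O(\gb/n^2)$ bound on $(1-q)$ times the $\tLi_1$-difference. The one ``main obstacle'' you flag is not actually one---distributing $(1-q)^{s_j}$ to each trailing factor already yields the summable majorant $\prod_j C_{s_j} n_j^{-s_j}$, dominated by a multiple of $\zeta(2,1,\dots,1)$ exactly as in the paper's appeal to \cite[Lemma~6.6(ii)]{BachmannKu2013}, so no separate partial-summation estimate on the non-resonant part is needed---whereas your closing remark about the diagonal $\ell$-term when $\gb=\ga_1$ identifies a genuine bookkeeping subtlety that the paper itself leaves to the reader.
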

\begin{proof}
First we assume $s_1>1$. Using \lemref{lem:eulerpol} we get
{\allowdisplaybreaks
\begin{align} \notag
 &\ Z_k\big( [s_1,\dots, s_d;\ga_1,\dots,\ga_d ] \big)  \notag\\
 &= \lim_{q \to 1^-}\big((1-q)^k [s_1,\dots, s_d;\ga_1,\dots,\ga_d] (q\eta^{-1}) \big) \notag \\
 &=\lim_{q \to 1^-}\left(
   (1-q)^k \sum_{n_1 > \dots > n_d > 0} \prod_{j=1}^d
\frac{ \eta^{\ga_j-n_j} q^{n_j} P_{s_j-1}\big( \eta^{\ga_j-n_j} q^{n_j} \big) }
{(s_j-1)! \big(1-\eta^{\ga_j-n_j} q^{n_j}  \big)^{s_j} } \right)  \label{equ:ConvSeries}\\
 &= \sum_{n_1 > \dots > n_d > 0}\prod_{j=1}^d \lim_{q \to 1^-}\left(
   (1-q)^{s_j}
\frac{ \eta^{\ga_j-n_j} q^{n_j} P_{s_j-1}\big( \eta^{\ga_j-n_j} q^{n_j} \big) }
{(s_j-1)! \big(1-\eta^{\ga_j-n_j} q^{n_j}  \big)^{s_j} } \right) \notag \\
 &= \sum_{\substack{n_1 > \dots > n_d > 0 \\ n_j\equiv \ga_j\ppmod{N}\ \forall 1\le j\le d} }
\prod_{j=1}^d \lim_{q \to 1^-}\left(
   (1-q)^{s_j}\frac{q^{n_j} P_{s_j-1}\big( q^{n_j} \big) }
{(s_j-1)! \big(1-q^{n_j}\big)^{s_j}  } \right) \notag \\
  &  = \zeta_N(s_1,\dots,s_d;\ga_1,\dots,\ga_d). \notag
 \end{align}}
Here we have used the fact that the $k$-th Eulerian polynomial $P_k(t)$ satisfies $P_k(1)=k!$.
We also need to justify the exchange of the order of taking the limit and taking the infinite sum. But
by triangle inequality
$$1\le |1-\eta^{\ga-n} q^n|+|\eta^{\ga-n} q^n|=|1-\eta^{\ga-n} q^n|+q^n.$$
Thus for all $q\in[1/2, 1)$ we have
$$\frac1{|1-\eta^{\ga-n} q^n|}\le \frac1{1-q^n}.$$
Moreover, since $P_{s_j-1}(x)$ has positive coefficients by \lemref{lem:eulerpol}
we see that
$$|P_{s_j-1}(\eta^{\ga_j-n_j} q^n)|\le P_{s_j-1}(q^{n_j}) \quad\text{for all $j$}.$$
Hence the series
appearing in \eqref{equ:ConvSeries} after the limit symbol converges absolutely:
\begin{align*}
&\,  (1-q)^k \sum_{n_1 > \dots > n_d > 0} \prod_{j=1}^d
\left|\frac{ \eta^{\ga_j-n_j} q^{n_j} P_{s_j-1}\big( \eta^{\ga_j-n_j} q^{n_j} \big) }
{(s_j-1)! \big(1-\eta^{\ga_j-n_j} q^{n_j}  \big)^{s_j} } \right|\\
\le &\, \sum_{n_1 > \dots > n_d > 0} \prod_{j=1}^d
 \frac{ (1-q)^{s_j}q^{n_j} P_{s_j-1}(q^{n_j}) }{(s_j-1)!(1-q^{n_j})^{s_j} }
\le  \zeta(2,1,\dots,1)
\end{align*}
by \cite[Lemma~6.6(ii)]{BachmannKu2013}.

We now turn to the proof of \eqref{equ:Z_kg_gb}. By definition \eqref{equ:defng_gb1}, if $1\le \gb<\ga_1$ then
\begin{align}
 &\,  Z_k\left( g_\gb(\bfs;\bfga) \right)=\lim_{q \to 1^-} \sum_{n>n_1 > \dots > n_d > 0}  \notag\\
& (1-q)^k \left( \frac{\eta^{-n} q^n }{1-\eta^{-n} q^n }-\frac{\eta^{-n} q^{\gb+n}}{1-\eta^{-n} q^{\gb+n} } \right)
\prod_{j=1}^d \frac{ \eta^{\ga_j-n_j} q^{n_j} P_{s_j-1} \big( \eta^{\ga_j-n_j} q^{n_j} \big) }
{(s_j-1)! \big(1-\eta^{\ga_j-n_j} q^{n_j}  \big)^{s_j} }   \label{equ:Z_kg_gb11}\\
 &\, -\lim_{q \to 1^-} (1-q)^k \sum_{\ell=1}^\gb \sum_{n_1>\dots>n_d>0}
\frac{\eta^{\gb-n_1-\ell} q^{n_1+\ell} } {1-\eta^{\gb-n_1-\ell} q^{n_1+\ell} }
\prod_{j=1}^d \frac{ \eta^{\ga_j-n_j} q^{n_j} P_{s_j-1} \big( \eta^{\ga_j-n_j} q^{n_j} \big) }
{(s_j-1)! \big(1-\eta^{\ga_j-n_j} q^{n_j}  \big)^{s_j} }. \label{equ:Z_kg_gb12}
\end{align}
It's easy to see that
\eqref{equ:Z_kg_gb12} vanishes unless $d>0$ and there exist $n_1$ and $\ell$ such that
\begin{equation*}
1\le \ell\le\gb,\quad  \ga_1\equiv n_1 \quad\text{and}\quad \gb\equiv n_1+\ell \pmod{N}.
\end{equation*}
But this forces $\ell+\ga_1\equiv \gb \pmod{N}$ which is impossible since $\gb<\ell+\ga_1<\gb+N$.
Hence \eqref{equ:Z_kg_gb12}=0.
On the other hand, by the proof of \cite[Lemma 6.6(ii)]{BachmannKu2013} we have
\begin{equation*}
    h_n(q):=\frac{(1-q)^2 q^n }{(1-q^{n})^2}-\frac{1}{n^2}<0 \quad \forall q\in[1/2, 1).
\end{equation*}
Therefore
\begin{equation*}
    (1-q)\left|\frac{\eta^{-n} q^{n}}{1-\eta^{-n} q^{n} }
-\frac{\eta^{-n} q^{\gb+n}}{1-\eta^{-n} q^{\gb+n}} \right|
=\left|\frac{(1-q)(1-q^\gb) q^n}{(1-\eta^{-n} q^{n})(1-\eta^{\gb-n} q^n) }\right|
\le \frac{\gb (1-q)^2 q^n }{(1-q^{n})^2} <\frac{\gb}{n^2}.
\end{equation*}
Hence by exchanging the order
of the limit and the summation \eqref{equ:Z_kg_gb11} we arrive at \eqref{equ:Z_kg_gb}.

When $\gb\ge \ga_1$ we can use definition \eqref{equ:defng_gb2} and prove the claim of
the proposition in a similar way. We only point out that
\begin{equation*}
\left|\frac{1-q^{\gb-N}}{1-q}\right|<2^{N-\gb}(N-\gb)  \quad \forall q\in[1/2, 1)
\end{equation*}
and leave the rest of the details to the interested reader.
\end{proof}

\section{A derivation and linear relations in $\MD_N$}\label{sec:derivation}
Our  main result in this section is that the $\Q_N$-linear operator $\dif=q \frac{d}{dq}$ on $\MD_N$
is a derivation and it increases the weight by at most 2 and the depth by at most 1 (see \thmref{thm:derivative}).
Although this is completely similar to the level 1 case proved
in \cite{BachmannKu2013}, there is some technical complications caused by
the appearance of the colors. Our proof is constructive, similar to the level $N=1$ case,
but we will derive the explicit formula for all depths. Since
the depth one case is a little different from the general case so it will be treated separately.
We will provide some concrete examples at the end of this section.

\begin{lem}\label{lem:TxTy}
The generating series $T_{\ga_1,\dots,\ga_d}(x_1,\dots,x_d)$ of MDFs of
depth $d$ can be written as
\begin{align*}
T_{\ga_1,\dots,\ga_d}(x_1,\dots,x_d) =&\, \sum_{s_1,\dots,s_d >0}
    [s_1,\dots,s_d;\ga_1,\dots,\ga_d] x_1^{s_1-1} \dots x_d^{s_d-1} \\
=&\, \sum_{n_1,\dots,n_d>0} \prod_{j=1}^d
\frac{e^{n_j x_j} \eta^{\ga_j n_j} q^{n_1+\dots+n_j}}{1- q^{n_1+\dots+n_j}}.
\end{align*}
\end{lem}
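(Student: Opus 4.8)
The plan is to substitute the polylogarithmic formula of Proposition~\ref{prop:ExpressMDFbyLi} into the definition of $T_{\ga_1,\dots,\ga_d}$, collapse the resulting one-variable generating series of normalized polylogarithms into geometric series, and then reparametrize the simplex $n_1>\dots>n_d>0$ by difference coordinates.

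First I would insert Proposition~\ref{prop:ExpressMDFbyLi} and interchange the order of summation (legitimate for $|q|<1$ and $x_1,\dots,x_d$ in a small neighborhood of $0$, by the absolute convergence noted below), getting
\begin{equation*}
T_{\ga_1,\dots,\ga_d}(x_1,\dots,x_d)=\sum_{n_1>\dots>n_d>0}\ \prod_{j=1}^d\Bigl(\sum_{s_j\ge 1}\tLi_{s_j}(\eta^{\ga_j}q^{n_j})\,x_j^{s_j-1}\Bigr).
\end{equation*}
Since $\tLi_s(z)=\Li_{1-s}(z)/(s-1)!=\sum_{v\ge 1}\frac{v^{s-1}}{(s-1)!}z^v$ for $s\in\N$, the inner sum is
\begin{equation*}
\sum_{s\ge 1}\tLi_s(z)\,x^{s-1}=\sum_{v\ge 1}z^v\sum_{s\ge 1}\frac{(vx)^{s-1}}{(s-1)!}=\sum_{v\ge 1}z^v e^{vx},
\end{equation*}
so taking $z=\eta^{\ga_j}q^{n_j}$ turns the $j$-th factor into $\sum_{v_j\ge 1}\eta^{\ga_j v_j}q^{n_j v_j}e^{v_j x_j}$, whence
\begin{equation*}
T_{\ga_1,\dots,\ga_d}(x_1,\dots,x_d)=\sum_{n_1>\dots>n_d>0}\ \sum_{v_1,\dots,v_d\ge 1}\ \prod_{j=1}^d \eta^{\ga_j v_j}\,q^{n_j v_j}\,e^{v_j x_j}.
\end{equation*}

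Next I would reparametrize the simplex: the assignment $n_j=m_j+m_{j+1}+\dots+m_d$ is a bijection from $\{(m_1,\dots,m_d):m_i\ge 1\}$ onto $\{n_1>\dots>n_d>0\}$, and under it $\sum_{j}n_j v_j=\sum_{j}v_j\sum_{i\ge j}m_i=\sum_{i}m_i(v_1+\dots+v_i)$, so that $q^{\sum_j n_j v_j}=\prod_{i=1}^d q^{m_i(v_1+\dots+v_i)}$. Summing the now-independent geometric series over $m_1,\dots,m_d$ and renaming $v_j$ as $n_j$ gives
\begin{equation*}
T_{\ga_1,\dots,\ga_d}(x_1,\dots,x_d)=\sum_{n_1,\dots,n_d\ge 1}\ \prod_{j=1}^d \eta^{\ga_j n_j}\,e^{n_j x_j}\,\frac{q^{n_1+\dots+n_j}}{1-q^{n_1+\dots+n_j}},
\end{equation*}
which is the assertion. (Equivalently one can bypass Proposition~\ref{prop:ExpressMDFbyLi} and match the coefficient of $x_1^{s_1-1}\cdots x_d^{s_d-1}$ on both sides against the multiple divisor sum $\gs^{\bfga}_{\bfs-\bfone}$ directly, using the same change of variables; the computation is identical.)

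The only point demanding genuine care is the convergence bookkeeping that justifies the two rearrangements — interchanging $\sum_{\bfs}$ with the simplex sum, and factoring the geometric series in $m_1,\dots,m_d$. For $|q|<1$ and the $x_j$ small this follows from the positivity of the Eulerian coefficients recorded in Lemma~\ref{lem:eulerpol} (which bounds $|\tLi_s(\eta^{\ga}q^n)|$ by a positive-coefficient expression in $q^n$, so that the triple series is dominated by a convergent one), exactly as in the level-one treatment of Bachmann and K\"uhn; everything else is formal.
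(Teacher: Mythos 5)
Your proof is correct and is essentially the paper's own argument (which the authors dispatch as a ``direct computation'' following Bachmann--K\"uhn): the key step in both is the same bijection between the simplex $n_1>\dots>n_d>0$ and independent positive integers via partial sums, which converts $q^{\sum_j n_jv_j}$ into $\prod_i q^{m_i(v_1+\dots+v_i)}$ and lets the geometric series factor. You merely run the computation from the left-hand side to the right (via Proposition~\ref{prop:ExpressMDFbyLi}) rather than the reverse, and the convergence remarks are fine.
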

\begin{proof}
This can be seen by direct computation. See \cite[Lemma~3.1]{BachmannKu2013} for details.
\end{proof}

A prototype of the following can be found also in \cite{GKZ2006}.

\begin{lem} \label{lem:shuffle2}
We have
\begin{align*}
T_\ga(x)\cdot T_\gb(y) &\, = T_{\ga+\gb,\ga}(x+y,x)+T_{\ga+\gb,\gb}(x+y,y)\\
 &\,  - T_{\ga+\gb}(x+y) + R_{\ga+\gb}^{(1;1)}(x+y),
\end{align*}
where
\begin{equation*}
R_\ga^{(1;1)}(x) =\sum_{n>0} e^{nx} \frac{\eta^{\ga n} q^n}{(1-q^n)^2}
= \sum_{k>0} \frac{\dif[k;\ga]}{k} x^k+[2;\ga].
\end{equation*}
\end{lem}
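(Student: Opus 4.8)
The plan is to prove the identity by expanding both sides into explicit Lambert-type $q$-series via \lemref{lem:TxTy} and matching terms after a change of summation variable. In depth one, \lemref{lem:TxTy} gives $T_\ga(x)=\sum_{m>0}\frac{e^{mx}\eta^{\ga m}q^m}{1-q^m}$ and $T_\gb(y)=\sum_{n>0}\frac{e^{ny}\eta^{\gb n}q^n}{1-q^n}$, so
\[
T_\ga(x)\cdot T_\gb(y)=\sum_{m,n>0}\frac{e^{mx}\eta^{\ga m}q^m}{1-q^m}\cdot\frac{e^{ny}\eta^{\gb n}q^n}{1-q^n},
\]
and I would break this double sum into the three ranges $m>n$, $m<n$, and $m=n$, treating each separately.

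For the range $m>n$ I would set $n_1=n$ and $n_2=m-n>0$; then $e^{mx}e^{ny}=e^{n_1(x+y)}e^{n_2x}$, $\eta^{\ga m}\eta^{\gb n}=\eta^{(\ga+\gb)n_1}\eta^{\ga n_2}$, $q^m=q^{n_1+n_2}$, and $q^n=q^{n_1}$, so by \lemref{lem:TxTy} in depth two this range is exactly $T_{\ga+\gb,\ga}(x+y,x)$. By the symmetry $(\ga,x)\leftrightarrow(\gb,y)$ the range $m<n$ produces $T_{\ga+\gb,\gb}(x+y,y)$. On the diagonal $m=n=:k$ one gets $\sum_{k>0}e^{k(x+y)}\eta^{(\ga+\gb)k}\frac{q^{2k}}{(1-q^k)^2}$, and here I would use the elementary splitting $\frac{q^{2k}}{(1-q^k)^2}=\frac{q^k}{(1-q^k)^2}-\frac{q^k}{1-q^k}$: the first piece is, by the defining Lambert series, $R_{\ga+\gb}^{(1;1)}(x+y)$, and the second is $T_{\ga+\gb}(x+y)$, which therefore enters with a minus sign. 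Adding the three contributions gives the asserted formula, and the first displayed expression for $R_\ga^{(1;1)}(x)$ is just this diagonal piece read off directly.

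For the second expression, $R_\ga^{(1;1)}(x)=\sum_{k>0}\frac{\dif[k;\ga]}{k}x^k+[2;\ga]$, I would start from $[k;\ga]=\frac1{(k-1)!}\sum_{n>0}n^{k-1}\eta^{\ga n}\frac{q^n}{1-q^n}$ (the coefficient of $x^{k-1}$ in $T_\ga(x)$), apply $\dif=q\frac{d}{dq}$ term by term using $\dif\frac{q^n}{1-q^n}=\frac{nq^n}{(1-q^n)^2}$ to get $\frac{\dif[k;\ga]}{k}=\frac1{k!}\sum_{n>0}n^k\eta^{\ga n}\frac{q^n}{(1-q^n)^2}$, and then sum the generating series: $\sum_{k>0}\frac{\dif[k;\ga]}{k}x^k=\sum_{n>0}\frac{\eta^{\ga n}q^n}{(1-q^n)^2}\big(e^{nx}-1\big)=R_\ga^{(1;1)}(x)-\sum_{n>0}\frac{\eta^{\ga n}q^n}{(1-q^n)^2}$, after which the $x$-free remainder is identified with $[2;\ga]$ from its Lambert-series/divisor-sum expansion (recall $\tLi_2(z)=z/(1-z)^2$).

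The computations are routine; the one step requiring genuine care is the reindexing in the two off-diagonal ranges, where one must verify that the color exponents transform as $\eta^{\ga m}\eta^{\gb n}=\eta^{(\ga+\gb)n_1}\eta^{\ga n_2}$ under $(m,n)=(n_1+n_2,\,n_1)$, so that the depth-two series produced are $T_{\ga+\gb,\ga}$ and $T_{\ga+\gb,\gb}$ with the \emph{shifted} first color and not $T_{\ga,\ga}$ or $T_{\gb,\gb}$. This is the level-$N$ analogue of the level-one prototype in \cite{BachmannKu2013} (see also \cite{GKZ2006}), and once the color bookkeeping is in place the argument goes through as there.
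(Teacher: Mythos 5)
Your argument reproduces the paper's proof essentially step for step: the same decomposition of the double sum into the ranges $m>n$, $m<n$, $m=n$, the same reindexing $(m,n)=(n_1+n_2,n_1)$ identifying the off-diagonal pieces with $T_{\ga+\gb,\ga}(x+y,x)$ and $T_{\ga+\gb,\gb}(x+y,y)$, and the same splitting $\left(\frac{q^k}{1-q^k}\right)^2=\frac{q^k}{(1-q^k)^2}-\frac{q^k}{1-q^k}$ on the diagonal; your termwise summation of $\sum_{k>0}\frac{\dif[k;\ga]}{k}x^k$ is the paper's computation of $\int_0^x\dif T_\ga(t)\,dt$ in different clothing. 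The color bookkeeping you single out as the delicate point is handled correctly.

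One caveat, which your write-up shares with the paper's own proof (and indeed with the statement of the lemma): the identification of the $x$-free remainder $\sum_{n>0}\eta^{\ga n}q^n/(1-q^n)^2$ with $[2;\ga]$ fails for $\ga\ne 0$. One has $[2;\ga]=\sum_{u,v>0}v\,\eta^{\ga v}q^{uv}$, with the color attached to the variable $v$ carrying the weight, whereas $\sum_{n>0}\eta^{\ga n}q^n/(1-q^n)^2=\sum_{n,v>0}v\,\eta^{\ga n}q^{nv}$ attaches it to the complementary variable. Already the coefficients of $q^2$ differ: $\eta^{\ga}+2\eta^{2\ga}$ for $[2;\ga]$ versus $2\eta^{\ga}+\eta^{2\ga}$ for the remainder (for $N=2$, $\ga=1$ this is $+1$ versus $-1$; compare the displayed expansion of $[2;1]_2$). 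In particular your closing appeal to $\tLi_2(z)=z/(1-z)^2$ yields $\sum_n\tLi_2(\eta^{\ga}q^n)=[2;\ga]$, which is not the series you need to identify; the constant term of $R_\ga^{(1;1)}$ is genuinely a ``transposed-color'' series and is not an MDF in general. This does not affect the main identity of the lemma (whose proof is complete once the three ranges are matched), nor its applications, since \propref{prop:formularDifd=2} assumes $s_1+s_2>2$ and therefore only uses the coefficients of $x^k$ with $k\ge 1$ in $R^{(1;1)}$; but the second displayed formula for $R_\ga^{(1;1)}$ needs its constant term corrected, and your proof of that formula inherits the same gap.
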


\begin{proof} By \lemref{lem:shuffle2}
\begin{align*}
T_\ga(x) T_\gb(y) &\, = \sum_{n_1,n_2 > 0}\eta^{\ga n_1+\gb n_2}  e^{n_1 x + n_2 y} \qe{n_1} \qe{n_2} \\
&\, = \sum_{n_1>n_2>0}\dots+ \sum_{n_2 > n_1>0} \dots +  \sum_{n_2 = n_1>0} \dots =: F_1 + F_2 + F_3,
\end{align*}
where by using the substitution $n_1 = n_2 + n_1'$
\begin{align*}
F_1 = &\sum_{n_1>n_2>0} \eta^{\ga n_1+\gb n_2}  e^{n_1 x + n_2 y} \qe{n_1} \qe{n_2} \\
=& \sum_{n_1',n_2>0}  \eta^{\ga n_1'+(\ga+\gb)n_2} e^{n_1' x + n_2 (x+y)}\qe{n_2}  \qe{n_1'+n_2}
= T_{\ga+\gb,\ga}(x+y,x).
\end{align*}
By similar argument $F_2= T_{\ga+\gb,\gb}(x+y,y).$
Now, since $\big( \qe{n} \big)^2 = \frac{q^n}{(1-q^n)^2} - \frac{q^n}{1-q^n}$
\begin{align*}
F_3 = &\sum_{n>0}  \eta^{\ga n+\gb n} e^{nx +ny} \left( \frac{q^n}{1-q^n} \right)^2 \\
= &\sum_{n>0} \eta^{(\ga+\gb) n}e^{n(x+y)}\frac{q^n}{(1-q^n)^2}
-\sum_{n>0} \eta^{(\ga+\gb) n}e^{n (x+y)}\frac{q^n}{1-q^n} \\
= &\, R_{\ga+\gb}^{(1;1)}(x+y)-T_{\ga+\gb}(x+y).
\end{align*}

To express $R_\ga^{(1;1)}(x)$ using the derivation $\dif$ we notice that
\begin{equation*}
 \sum_{k>0} \dif[k;\ga] x^{k-1}
= \dif T_\ga(x) =\dif \sum_{n>0} \eta^{ \ga n} e^{nx} \qe{n} = \sum_{n>0} n  \eta^{ \ga n} e^{nx} \frac{q^n}{(1-q^n)^2}.
\end{equation*}
Thus
\begin{align*}
 \sum_{k>0} \frac{\dif [k;\ga]}{k} x^{k}
&=  \int_0^x  \dif T_\ga(t) dt = \sum_{n>0} \int_0^x n \eta^{ \ga n} e^{nt} dt \frac{q^n}{(1-q^n)^2} \\
&= \sum_{n>0} \eta^{ \ga n} e^{nx} \frac{q^n}{(1-q^n)^2} - [2;\ga].
\end{align*}
This completes the proof of the lemma.
\end{proof}

\begin{prop} \label{prop:formularDifd=2}
For $\ga,\gb\in\Z/N\Z$, $s_1,s_2\in\N$ with $s_1+s_2>2$ and $s=s_1+s_2-2$
we have
\begin{align*}
\binom{s}{s_1-1} \frac{\dif[s;\ga+\gb]}{s} &\, =
[s_1;\ga]\cdot [s_2;\gb] +\binom{s}{s_1-1} [s+1;\ga+\gb] \\
&\, - \sum_{a+b=s+2} \left( \binom{a-1}{s_1-1}[a,b;\ga+\gb,\gb]
    +\binom{a-1}{s_2-1}[a,b;\ga+\gb,\ga] \right).
\end{align*}
\end{prop}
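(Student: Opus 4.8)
The plan is to compare the coefficient of $x^{s_1-1}y^{s_2-1}$ on the two sides of the generating-function identity in \lemref{lem:shuffle2}. On the left, by \lemref{lem:TxTy} the coefficient of $x^{s_1-1}y^{s_2-1}$ in $T_\ga(x)\cdot T_\gb(y)=\sum_{m,n>0}[m;\ga]\cdot[n;\gb]\,x^{m-1}y^{n-1}$ is exactly $[s_1;\ga]\cdot[s_2;\gb]$, which is the first term on the right-hand side of the proposition.

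On the right-hand side of \lemref{lem:shuffle2} I would treat the four summands separately, expanding $(x+y)^m=\sum_k\binom{m}{k}x^ky^{m-k}$ each time. In $T_{\ga+\gb,\ga}(x+y,x)=\sum_{a,b>0}[a,b;\ga+\gb,\ga](x+y)^{a-1}x^{b-1}$ the monomial $x^{s_1-1}y^{s_2-1}$ occurs exactly when $a+b=s_1+s_2=s+2$, with coefficient $\binom{a-1}{s_2-1}$ there, so this summand contributes $\sum_{a+b=s+2}\binom{a-1}{s_2-1}[a,b;\ga+\gb,\ga]$; the symmetric computation for $T_{\ga+\gb,\gb}(x+y,y)$ contributes $\sum_{a+b=s+2}\binom{a-1}{s_1-1}[a,b;\ga+\gb,\gb]$. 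The term $-T_{\ga+\gb}(x+y)=-\sum_{c>0}[c;\ga+\gb](x+y)^{c-1}$ contributes only from $c-1=s$, namely $-\binom{s}{s_1-1}[s+1;\ga+\gb]$. Finally, in $R^{(1;1)}_{\ga+\gb}(x+y)=\sum_{k>0}\frac{\dif[k;\ga+\gb]}{k}(x+y)^k+[2;\ga+\gb]$ the constant term $[2;\ga+\gb]$ plays no role, since the hypothesis $s_1+s_2>2$ forces $s\ge 1$ and we are reading off a monomial of positive total degree; the surviving part contributes $\binom{s}{s_1-1}\frac{\dif[s;\ga+\gb]}{s}$ (from $k=s$). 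Equating the coefficients and solving for the $\dif$-term yields the asserted identity.

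The argument is essentially bookkeeping, so no real obstacle arises. The points that need care are: keeping the two pairings straight (the color $\ga$ on the inner entry of $T_{\ga+\gb,\ga}$ goes together with $\binom{a-1}{s_2-1}$, and likewise $\gb$ with $\binom{a-1}{s_1-1}$); noting that all sums over $a+b=s+2$ are finite and that the binomial coefficients vanish automatically outside the ranges $s_1\le a\le s+1$ and $s_2\le a\le s+1$, so no separate boundary discussion is needed; and recognising that the sole purpose of the hypothesis $s_1+s_2>2$ is to discard the degree-zero term $[2;\ga+\gb]$ coming from $R^{(1;1)}_{\ga+\gb}$.
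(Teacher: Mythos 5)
Your proposal is correct and is essentially identical to the paper's own proof: both expand each term of Lemma \ref{lem:shuffle2} via the binomial theorem and read off the coefficient of $x^{s_1-1}y^{s_2-1}$, with the same pairing of $\binom{a-1}{s_2-1}$ with the color $\ga$ and $\binom{a-1}{s_1-1}$ with $\gb$. Your remark that the hypothesis $s_1+s_2>2$ serves only to discard the constant term $[2;\ga+\gb]$ of $R^{(1;1)}_{\ga+\gb}$ is a correct observation the paper leaves implicit.
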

\begin{proof}
We have
\begin{align*}
T_{\ga+\gb,\ga}(x+y,x) &\,=\sum_{s_1,s_2 >0}\sum_{a=s_2}^{s_1+s_2-1}
    \binom{a-1}{s_2-1} [a,s_1+s_2-a;\ga+\gb,\ga] x^{s_1-1} y^{s_2-1} ,\\
T_{\ga+\gb,\gb}(x+y,y) &\,=\sum_{s_1,s_2 >0}\sum_{a=s_1}^{s_1+s_2-1}
    \binom{a-1}{s_1-1} [a,s_1+s_2-a;\ga+\gb,\gb] x^{s_1-1} y^{s_2-1} ,\\
T_{\ga+\gb}(x+y) &\, = \sum_{s_1,s_2 >0}  \binom{s_1+s_2-2}{s_1-1}
    [s_1+s_2-1;\ga+\gb]  x^{s_1-1} y^{s_2-1} , \\
\sum_{k>0} \frac{\dif[k;\ga+\gb]}{k} (x+y)^{k} &\, = \sum_{s_1,s_2 >0}
\binom{s_1+s_2-2}{s_1-1} \frac{\dif[s_1+s_2-2;\ga+\gb]}{s_1+s_2-2}  x^{s_1-1} y^{s_2-1}.
\end{align*}
Now the proposition follows immediately from \lemref{lem:shuffle2}.
\end{proof}

\begin{eg}\label{eg:Dif[1;1]}
Taking $\ga=0$ and $\gb=1$ we get for all $N\ge 0$
\begin{align*}
\dif[1;1]=&\, [2;0]\cdot [1;1]+[2;1]-[2,1;1,1]-[2,1;1,0]-[1,2;1,0]  \\
=&\, [2,1;0,1]-[2,1;1,1]-[2,1;1,0]+[2;1]
-\om^N_{0;1} [2;0]
+\om^N_{1;1} \big([1;0]-[1;1] \big)
\end{align*}
by \propref{prop:l2-expli} and \eqref{equ:gl=om}. Similarly
\begin{align*}
\dif[2;1]=&\, [2;0]\cdot [2;1]+2[3;1]-[2,2;1,0]-2[3,1;1,0]-[2,2;1,1]-2[3,1;1,1] \\
=&\, [2,2;0,1]-[2,2;1,1]-2[3,1;1,0]-2[3,1;1,1]+2[3;1]\\
-&\,\om^N_{1;1} \big([2;0]+[2;1] \big) +\om^N_{2;1} \big([1;0]-[1;1] \big),
\end{align*}
again by \propref{prop:l2-expli}  and \eqref{equ:gl=om}.
\end{eg}

The next result provides us with a way to find $\Q$-linear relations in $\MZV(w,N)$ systematically.
\begin{thm}\label{thm:ZD}
For any positive integer $w\ge 3$ and $N$ we have
\begin{equation*}
  \filw_{w-1}(\MD_N) \subseteq  \ker(Z_w),\quad\text{and}\quad
    \dif \filw_{w-2}(\MD_N) \subseteq  \ker(Z_w).
\end{equation*}
\end{thm}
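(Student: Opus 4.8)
The plan is to reduce both inclusions to a single growth estimate for the $q$-expansion coefficients of elements of $\MD_N$, and then to extract the limits defining $Z_w$ directly. Here I take $Z_w$ to be, as in the discussion preceding \propref{prop:Zkexpli}, the map $F\mapsto\lim_{q\to1^-}(1-q)^{w}F(q\eta^{-1})$ on those elements of $\MD_N$ for which the limit exists. The estimate I would establish is: for every $F\in\filw_m(\MD_N)$, writing $F(q)=\sum_{n\ge 1}a_n(F)\,q^n$, one has $a_n(F)=O_\epsilon\!\big(n^{m-1+\epsilon}\big)$ for every $\epsilon>0$; equivalently, the order of vanishing at a root of unity of a weight-$m$ element is at most $m$. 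This is the level-$N$ analogue of the estimates of Bachmann--K\"uhn at level one and is the only genuinely computational ingredient.

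To prove the estimate I would argue as follows. By $\Q_N$-linearity it suffices to treat the generators, and by \thmref{thm:polyadalg} (which identifies $\MD_N$ with $\qMZ_N[t_N]$) every element of $\filw_m(\MD_N)$ is a $\Q_N$-combination of products $[\bfs;\bfga]\,t_N^{\,r}$ and $g_\gb(\bfs;\bfga)\,t_N^{\,r}$, including the empty cases such as $t_N^{\,r}$, of total weight at most $m$. Coefficientwise a product of $q$-series is a convolution, and $\sum_{i+j=n}i^{a}j^{b}=O(n^{a+b+1})$ for $a,b>-1$, so the exponents combine additively and one is reduced to three base cases. For $[\bfs;\bfga]$: $|\gs^{\bfga}_{\bfs-\bfone}(n)|\le\sum_{m_1+\cdots+m_d=n}\prod_j\gs_{s_j-1}(m_j)$ together with the classical bound $\gs_{s-1}(m)=O_\epsilon(m^{s-1+\epsilon})$ give $a_n([\bfs;\bfga])=O_\epsilon(n^{|\bfs|-1+\epsilon})$. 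For $g_\gb(\bfs;\bfga)$: the defining series \eqref{equ:defng_gb1}--\eqref{equ:defng_gb2}, expanded term by term, look like depth-$(d+1)$, weight-$(|\bfs|+1)$ multiple divisor functions ($d=\dep(\bfs)$), so $a_n\big(g_\gb(\bfs;\bfga)\big)=O_\epsilon(n^{|\bfs|+\epsilon})$. For $t_N$: its $q^n$-coefficient is $N\tau(n/N)$ when $N\mid n$ and $0$ otherwise, hence $a_n(t_N)=O_\epsilon(n^{\epsilon})$. Combining these with the convolution bound proves the claim.

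Granting the estimate, the remaining arguments are soft. For part one, if $F\in\filw_{w-1}(\MD_N)$ then $|F(q\eta^{-1})|\le\sum_{n\ge 1}|a_n(F)|\,q^{n}=O_\epsilon\!\big((1-q)^{-(w-1+\epsilon)}\big)$ as $q\to1^-$, by the standard comparison $\sum_n n^{c}q^n\asymp(1-q)^{-c-1}$; hence $(1-q)^{w}F(q\eta^{-1})=O_\epsilon\!\big((1-q)^{1-\epsilon}\big)\to 0$, so $Z_w(F)=0$. For part two, let $f\in\filw_{w-2}(\MD_N)$; by \thmref{thm:derivative} one has $\dif f\in\filw_{w}(\MD_N)$, so $Z_w(\dif f)$ is defined. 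The first step gives $a_n(f)=O_\epsilon(n^{w-3+\epsilon})$, so the $q$-series $\dif f=q\,\tfrac{d}{dq}f=\sum_{n}n\,a_n(f)\,q^n$ has coefficients $O_\epsilon(n^{w-2+\epsilon})$; therefore $|(\dif f)(q\eta^{-1})|\le\sum_{n}n|a_n(f)|\,q^{n}=O_\epsilon\!\big((1-q)^{-(w-1+\epsilon)}\big)$ and $(1-q)^{w}(\dif f)(q\eta^{-1})=O_\epsilon\!\big((1-q)^{1-\epsilon}\big)\to 0$, i.e.\ $Z_w(\dif f)=0$.

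The point worth emphasizing is that $\dif$ raises the weight by $2$ but the order of vanishing at $\eta^{-1}$ by only $1$, so a weight-$w$ element of the shape $\dif f$ with $f$ of weight $w-2$ has order at most $w-1<w$ and hence lies in $\ker Z_w$; under $Z_w$ this gap turns into exactly the finite double shuffle relations for convergent level-$N$ MZVs. As an alternative (and a sanity check) one could feed the explicit formula of \propref{prop:formularDifd=2} and its higher-depth analogue into the ring homomorphism $\ol{Z}$ of \thmref{thm:Zk}, obtaining $Z_w(\dif f)$ as the difference of the stuffle and the shuffle expansions of a product of two lower-weight level-$N$ MZVs, which vanishes by the series and iterated-integral representations (equivalently by \thmref{thm:MZVMPVlevelN} and Racinet's shuffle relations for MPVs); but I would keep the analytic argument above as the actual proof, as it is insensitive to the bookkeeping in the derivation formula. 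The main obstacle is thus the first step: pinning down the uniform coefficient estimate, in particular its interaction with the depth $d$ and with the mixed-weight products $[\bfs;\bfga]\,t_N^{\,r}$.
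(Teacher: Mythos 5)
Your proof is correct and follows essentially the same route as the paper, which simply defers to Bachmann--K\"uhn's level-one argument: the content there is exactly your coefficient-growth estimate $a_n(F)=O_\epsilon(n^{m-1+\epsilon})$ for weight-$m$ elements, combined with the observation that $\dif$ raises the weight by $2$ but the order of the pole at $q=\eta^{-1}$ by only $1$. The level-$N$ details you supply (the bound for $g_\gb$ coming from \eqref{equ:defng_gb1}--\eqref{equ:defng_gb2}, the convolution bounds, and the factor $t_N$) are precisely the ones the paper leaves to the reader, and they check out.
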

\begin{proof}
Essentially the same proof for the level $N=1$ case used by Bachman and K\"uhn  
(see \cite[Proposition~7.3]{BachmannKu2013}) works in general. We leave the details to
the interested reader.
\end{proof}

\begin{eg}\label{eg:relderiva}
Suppose $N=2$. By Example~\ref{eg:Dif[1;1]}
 \begin{align}
\dif [\baro{1}]=&\, [2,\baro{1}]-[\baro{2},1]-[\baro{2},\baro{1}]+[\baro{2}]+\frac14\Big([1]-[\baro{1}]-2[2]\Big),\label{equ:Dodd1}\\
\dif [\baro{2}]=&\, 2[\baro{3}]-2[\baro{3},\baro{1}]-[\baro{2},\baro{2}]
    -2[\baro{3},1]+[2,\baro{2}]-\frac14\Big([2]+[\baro{2}]\Big).\label{equ:Dodd2}
\end{align}
Applying the map $Z$ to \eqref{equ:Dodd1} and \eqref{equ:Dodd2} we get two relations
at level 2 for the MZVs of weight 3 and 4, respectively:
\begin{align}
 \zeta_{2}(2,\baro{1})=&\, \zeta_{2}(\baro{2},1)+\zeta_{2}(\baro{2},\baro{1}) \label{equ:N=2wt3Reuse}\\
 \zeta_{2}(2,\baro{2})=&\, 2\zeta_{2}(\baro{3},\baro{1})+\zeta_{2}(\baro{2},\baro{2})+2\zeta_{2}(\baro{3},1).\notag
\end{align}
Notice that
{\allowdisplaybreaks
\begin{align}
4\zeta_{2}(2,\baro{1})=&\, \zeta(2,1)-\zeta(2,\bar1)+\zeta(\bar2,1)-\zeta(\bar2,\bar1),\notag\\
4\zeta_{2}(\baro{2},1)=&\, \zeta(2,1)-\zeta(\bar2,1)+\zeta(2,\bar1)-\zeta(\bar2,\bar1),\notag\\
4\zeta_{2}(\baro{2},\baro{1})=&\, \zeta(2,1)-\zeta(2,\bar1)-\zeta(\bar2,1)+\zeta(\bar2,\bar1),\notag\\
4\zeta_{2}(\baro{3},1)=&\, \zeta(3,1)-\zeta(\bar3,1)+\zeta(3,\bar1)-\zeta(\bar3,\bar1), \label{equ:N=2Wt=41} \\
4\zeta_{2}(\baro{3}, \baro{1})=&\, \zeta(3,1)-\zeta(3,\bar1)-\zeta(\bar3,1)+\zeta(\bar3,\bar1),\label{equ:N=2Wt=42} \\
4\zeta_{2}(2,\baro{2})=&\, \zeta(2,2)-\zeta(2,\bar2)+\zeta(\bar2,2)-\zeta(\bar2,\bar2),\label{equ:N=2Wt=43} \\
4\zeta_{2}(\baro{2},\baro{2})=&\,   \zeta(2,2)-\zeta(2,\bar2)-\zeta(\bar2,2)+\zeta(\bar2,\bar2).\label{equ:N=2Wt=44}
\end{align}}
Therefore we get the relations among alternating Euler sums:
\begin{align*}
3\zeta(\bar2,1)=&\, \zeta(2,1)+\zeta(2,\bar1)+\zeta(\bar2,\bar1), \\
2\zeta(3,1)=&\, 2\zeta(\bar3,1)+\zeta(\bar2,2)-\zeta(\bar2,\bar2).
\end{align*}
These two relations can be easily verified using the explicit structural results
contained in \cite[Proposition 3.3 and 3.5]{Zhao2007unpub}
(unfortunately, these were omitted in the published version \cite{Zhao2010a}).
\end{eg}

To deal with the general depth we define for all $\ga_1,\dots,\ga_d\in(\Z/N\Z)^d$ and $1\le j\le d$
\begin{equation*}
R_{\ga_1,\dots,\ga_d}^{(j;d)}(x_1,\dots,x_d)
 :=\sum_{n_1,\dots,n_d >0}  \prod_{i=1}^d
\frac{\eta^{\ga_in_i} e^{n_i x_i } q^{n_1+\cdots+n_i}}{(1-q^{n_1+\dots+n_i})^{\delta_{i,j}+1}}.
\end{equation*}
and for $\bfy=(y_1,\dots,y_d)$
\begin{equation}\label{equ:Rdefn}
 R_{\gb;\bfga}^{(d)}(x;\bfy)
 =\sum_{j=1}^d  R_{\gb+\ga_1,\dots,\gb+\ga_j,\ga_{j+1},\dots,\ga_d}^{(j;d)}(x+y_1,\dots,x+y_j,y_{j+1},\dots,y_d).
\end{equation}
The operator $D_x$ on functions $f(x,y,\cdots)$ is defined by
\begin{equation*}
D(f) =  \frac{\partial f(x,y,\cdots)}{\partial x}   \Big|_{x=0}.
\end{equation*}

Observe that $D_x(R_{\gb;\ga}^{(1)}(x;y)) = D_x(R_{\ga+\gb}^{(1;1)}(x+y)) = \dif T_{\ga+\gb}(y)$.
This can be partially generalized to arbitrary depths by the following lemma.
\begin{lem} \label{lem:AllDepth}
For all $d\ge 2$ and $\ga_1,\dots,\ga_d\in(\Z/N\Z)^d$ we have
\begin{align*}
D_x\Big( R_{0;\ga_1,\dots,\ga_d}^{(d)}(x;y_1,\dots,y_d) \Big)
= \dif T_{\ga_1,\dots,\ga_d}(y_1,\dots,y_d).
\end{align*}
\end{lem}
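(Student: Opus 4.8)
The plan is to prove the identity by expanding both sides as explicit generating $q$-series in $y_1,\dots,y_d$ and observing that they are literally equal; the computation is uniform in $d\ge 1$, so in particular it covers all $d\ge 2$. Throughout I would write $N_i:=n_1+\dots+n_i$ for $1\le i\le d$. Since $N_d\ge d\ge 1$, for each fixed monomial in the $y_i$'s only finitely many tuples $(n_1,\dots,n_d)$ contribute to any given power of $q$, so every coefficient appearing below lies in $\Q_N[\![q]\!]$ and all interchanges of summation, and of $D_x$ and of $\dif$ with $\sum$, are legitimate.

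First I would unwind the left-hand side. Because $\gb=0$ in \eqref{equ:Rdefn} the shifted colors $\gb+\ga_i$ reduce to $\ga_i$, so
\[
R_{0;\ga_1,\dots,\ga_d}^{(d)}(x;y_1,\dots,y_d)=\sum_{j=1}^d R_{\ga_1,\dots,\ga_d}^{(j;d)}(x+y_1,\dots,x+y_j,y_{j+1},\dots,y_d).
\]
Substituting $x_i=x+y_i$ for $i\le j$ and $x_i=y_i$ for $i>j$ into the definition of $R^{(j;d)}$, the exponential factors combine as $\prod_{i=1}^{j}e^{n_i(x+y_i)}\prod_{i=j+1}^{d}e^{n_iy_i}=e^{xN_j}\prod_{i=1}^{d}e^{n_iy_i}$, while the product over $i$ of $q^{N_i}/(1-q^{N_i})^{\delta_{i,j}+1}$ equals $\frac{1}{1-q^{N_j}}\prod_{i=1}^{d}\frac{q^{N_i}}{1-q^{N_i}}$ (the exponent is $2$ only at $i=j$). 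Hence
\[
R_{\ga_1,\dots,\ga_d}^{(j;d)}(x+y_1,\dots,x+y_j,y_{j+1},\dots,y_d)=\sum_{n_1,\dots,n_d>0}e^{xN_j}\,\frac{1}{1-q^{N_j}}\,\prod_{i=1}^{d}\eta^{\ga_i n_i}e^{n_iy_i}\frac{q^{N_i}}{1-q^{N_i}}.
\]
Applying $D_x$ replaces $e^{xN_j}$ by $N_j$, and summing over $j$ gives
\[
D_x\Big(R_{0;\ga_1,\dots,\ga_d}^{(d)}(x;y_1,\dots,y_d)\Big)=\sum_{n_1,\dots,n_d>0}\Big(\sum_{j=1}^{d}\frac{N_j}{1-q^{N_j}}\Big)\prod_{i=1}^{d}\eta^{\ga_i n_i}e^{n_iy_i}\frac{q^{N_i}}{1-q^{N_i}}.
\]

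The last step would compute the right-hand side and match it. Since $\dif=q\frac{d}{dq}$ is a derivation of $\Q_N[\![q]\!]$ and $\dif\big(q^{N}/(1-q^{N})\big)=Nq^{N}/(1-q^{N})^{2}=\frac{N}{1-q^{N}}\cdot\frac{q^{N}}{1-q^{N}}$, the Leibniz rule applied to $\prod_{i=1}^{d}\frac{q^{N_i}}{1-q^{N_i}}$ in the formula for $T_{\ga_1,\dots,\ga_d}$ from \lemref{lem:TxTy} gives
\[
\dif T_{\ga_1,\dots,\ga_d}(y_1,\dots,y_d)=\sum_{n_1,\dots,n_d>0}\Big(\sum_{j=1}^{d}\frac{N_j}{1-q^{N_j}}\Big)\prod_{i=1}^{d}\eta^{\ga_i n_i}e^{n_iy_i}\frac{q^{N_i}}{1-q^{N_i}},
\]
which is exactly the expression found for $D_x\big(R_{0;\ga_1,\dots,\ga_d}^{(d)}\big)$, so the two sides agree. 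I expect the only delicate point to be the bookkeeping of the argument shifts $x_i\mapsto x+y_i$ (for $i\le j$) when passing from $R^{(j;d)}$ to its contribution, together with the convergence remark made at the outset; neither requires more than routine care, so there is no serious obstacle.
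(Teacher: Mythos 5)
Your proposal is correct and follows essentially the same route as the paper: apply $D_x$ to each summand $R^{(j;d)}$, which pulls down the factor $N_j=n_1+\cdots+n_j$ against the squared denominator $(1-q^{N_j})^2$, then sum over $j$ and recognize the result as the Leibniz-rule expansion of $\dif$ acting on the product $\prod_i q^{N_i}/(1-q^{N_i})$ via $\dif\bigl(q^n/(1-q^n)\bigr)=nq^n/(1-q^n)^2$. You have merely written out explicitly the bookkeeping that the paper leaves as "the product formula," together with a harmless remark on interchanging $D_x$, $\dif$ and the summation.
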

\begin{proof}
For any fixed $j$ we have
\begin{multline*}
D_x\Big(R_{\ga_1,\dots,\ga_d}^{(j;d)}(x+y_1,\dots,x+y_j,y_{j+1},\dots,y_d)\Big) = \\
 \sum_{n_1,\dots,n_d >0} (n_1+\dots+n_j)
\eta^{\ga_1n_1+\dots+\ga_d n_d}  e^{n_1 y_1 + \dots + n_d y_d} \prod_{i=1}^d \frac{q^{n_1+\dots+n_i}}{(1+q^{n_1+\dots+n_i})^{\delta_{i,j}+1}}.
\end{multline*}
Adding all these equations together when $j$ goes from 1 to $d$ we now can prove the lemma easily
by using $\dif \qe{n} = \frac{ n \cdot q^n}{(1-q^n)^2}$ and the product formula.
\end{proof}

\begin{thm} \label{thm:derivative}
\emph{(i)} The operator $\dif = q \frac{d}{dq}$ is a derivation on $\MD_N$.

\emph{(ii)} Let $d\ge 2$, $\bfs=(s_1,\dots,s_d)\in\N^d$ and $\bfga=(\ga_1,\dots,\ga_d)\in(\Z/N\Z)^d$.
For any $j\le d$ let $\bfe_j=(0,\dots,1,\dots,0)\in \Z^d$ be the $j$-th standard unit vector and put
$\bfga(j)=(\ga_1,\dots,\ga_j,\ga_j,\dots,\ga_d)$ with only $\ga_j$ repeated once.
Then we have
\begin{align*}
\dif[\bfs;\bfga]=&[2;0]\cdot [\bfs;\bfga]
+\sum_{j=1}^d (d-j+1) s_j[\bfs+\bfe_j;\bfga]
-\sum_{j=1}^d s_j[\bfs+\bfe_j,1;\bfga,0] \\
-&\, [\bfs,2;\bfga,0]-\sum_{j=1}^d  \sum_{a+b=s_j+2} (a-1) [s_1,\dots,s_{j-1},a,b,s_{j+1},\dots,s_d;\bfga(j)] \\
-&\, \sum_{j=1}^d \sum_{\ell=1}^{j-1} \sum_{a+b=s_j+1} s_\ell [s_1+\gd_{\ell,1},\dots,s_{j-1}+\gd_{\ell,j-1},a,b,s_{j+1},\dots,s_d;\bfga(j)].
\end{align*}
Hence
$$\dif:\filwle_{k,d}(\MD_N)\lra \filwle_{k+2,d+1}(\MD_N).$$
\end{thm}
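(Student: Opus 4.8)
The plan is to prove part (ii) first, as an identity of formal $q$-series in $\Q_N[\![q]\!]$ --- which makes sense because $\dif=q\frac{d}{dq}$ is patently a derivation of $\Q_N[\![q]\!]$ --- and then to deduce the weight--depth bound and part (i) from it. The three inputs are: the generating function $T_{\ga_1,\dots,\ga_d}(x_1,\dots,x_d)$ of \lemref{lem:TxTy}; the identity $D_x\big(R_{0;\bfga}^{(d)}(x;\bfy)\big)=\dif\, T_{\ga_1,\dots,\ga_d}(\bfy)$ of \lemref{lem:AllDepth}; and a depth-$d$ generalization of the product formula \lemref{lem:shuffle2}, whose derivation is the heart of the argument.

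\medskip\noindent\textbf{Step 1 (the product formula).} I would establish, for every $\bfga=(\ga_1,\dots,\ga_d)$ and $\bfy=(y_1,\dots,y_d)$, an identity of the shape
\[
T_0(x)\cdot T_{\ga_1,\dots,\ga_d}(y_1,\dots,y_d)=\big(\text{generating functions }T\text{ of depth }d{+}1\text{ and }d\big)+R_{0;\bfga}^{(d)}(x;\bfy),
\]
obtained exactly as in the proof of \lemref{lem:shuffle2}. Write $T_{\bfga}(\bfy)$ as a sum over chains of partial sums $0<N_1<\dots<N_d$ with $N_j=n_1+\dots+n_j$, and split the single index $m$ of $T_0(x)=\sum_{m>0}e^{mx}\qe{m}$ according to its position relative to that chain. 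When $m$ lies strictly below $N_1$ or strictly between $N_{j-1}$ and $N_j$, the substitution used for $F_1,F_2$ in that proof yields a depth-$(d{+}1)$ generating function of the form $T_{\bfga(j)}(x+y_1,\dots,x+y_j,y_j,y_{j+1},\dots,y_d)$, the freshly inserted node inheriting the colour $\ga_j$ of the node just above it and the variable $x$ being distributed to the slots below the insertion; when $m>N_d$ one gets instead $T_{\ga_1,\dots,\ga_d,0}(x+y_1,\dots,x+y_d,x)$. The ``diagonal'' contributions $m=N_j$ ($1\le j\le d$) are treated with $\big(\qe{N_j}\big)^2=\frac{q^{N_j}}{(1-q^{N_j})^2}-\qe{N_j}$ just as in the computation of $F_3$, and they produce exactly the $j$-th summand $R_{\ga_1,\dots,\ga_d}^{(j;d)}(x+y_1,\dots,x+y_j,y_{j+1},\dots,y_d)$ of $R_{0;\bfga}^{(d)}(x;\bfy)$, minus a depth-$d$ correction term $T_{\ga_1,\dots,\ga_d}(x+y_1,\dots,x+y_j,y_{j+1},\dots,y_d)$.

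\medskip\noindent\textbf{Step 2 (apply $D_x$ and compare coefficients).} Solving Step 1 for $R_{0;\bfga}^{(d)}(x;\bfy)$ and applying $D_x=\partial_x|_{x=0}$: the left side becomes $\dif\, T_{\bfga}(\bfy)$ by \lemref{lem:AllDepth}; on the right $D_x\big(T_0(x)\,T_{\bfga}(\bfy)\big)=[2;0]\cdot T_{\bfga}(\bfy)$, since $D_x\,T_0(x)=\sum_{m>0}m\qe{m}=[2;0]$; and all remaining terms are differentiated by the chain rule, each $x$-bearing slot contributing the corresponding partial derivative, i.e. the substitution $s_i\mapsto s_i+1$ with weight $s_i$ (and weight $t_i-1$ on a freshly created slot). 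Extracting the coefficient of $y_1^{s_1-1}\cdots y_d^{s_d-1}$ then gives the stated formula: the factor $d-j+1$ counts the diagonal corrections contributing $[\bfs+\bfe_j;\bfga]$; the two depth-$(d{+}1)$ families $[\bfs+\bfe_j,1;\bfga,0]$ and $[\bfs,2;\bfga,0]$ come from $T_{\ga_1,\dots,\ga_d,0}(x+y_1,\dots,x+y_d,x)$; the sum $\sum_{a+b=s_j+2}(a-1)[\cdots;\bfga(j)]$ comes from differentiating $T_{\bfga(j)}(\cdots)$ in its freshly created slot; and $\sum_{\ell<j}\sum_{a+b=s_j+1}s_\ell[\cdots;\bfga(j)]$ from differentiating it in the slots below the insertion.

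\medskip\noindent\textbf{Step 3 (derivation, filtration, main obstacle).} For $d\ge2$ the formula just proved exhibits $\dif[\bfs;\bfga]$ as a $\Q_N$-linear combination of MDFs and of the product $[2;0]\cdot[\bfs;\bfga]$, which lies in $\MD_N$ by \thmref{thm:md-algebra}; for $d=1$ the same holds by \propref{prop:formularDifd=2}. Hence $\dif$ maps $\MD_N$ into itself and, being a derivation of $\Q_N[\![q]\!]$, is a derivation of $\MD_N$, which is (i); and since every term on the right-hand side of the depth-$d$ formula has weight $\le|\bfs|+2$ and depth $\le d+1$, one gets $\dif\big(\filwle_{k,d}(\MD_N)\big)\subseteq\filwle_{k+2,d+1}(\MD_N)$. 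The only real obstacle is Step 1: one must track the colours and the several nested index substitutions carefully enough to recognise each ``insertion'' contribution as a genuine depth-$(d{+}1)$ generating function, and then in Step 2 carry out the chain-rule differentiation and coefficient extraction so as to recover the precise binomial-free coefficients. This is a direct, if laborious, generalization of the level-one computation of Bachmann and K\"uhn \cite{BachmannKu2013}, the colours being the only genuinely new feature.
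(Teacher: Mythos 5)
Your proposal follows the paper's own argument essentially verbatim: the same decomposition of $T_0(x)\cdot T_{\bfga}(\bfy)$ into the depth-$(d{+}1)$ insertion terms $T_{\bfga(j)}$ and $T_{\bfga,0}$, the diagonal terms $R^{(d)}_{0;\bfga}$, and the depth-$d$ corrections, followed by applying $D_x$, invoking \lemref{lem:AllDepth}, and extracting the coefficient of $y_1^{s_1-1}\cdots y_d^{s_d-1}$. The bookkeeping you describe for each family of terms (the factor $d-j+1$ from the diagonal corrections, the $[\bfs+\bfe_j,1;\bfga,0]$ and $[\bfs,2;\bfga,0]$ terms from $T_{\bfga,0}$, and the two remaining sums from $T_{\bfga(j)}$) is exactly what the paper's coefficient comparison yields, so the proof is correct and coincides with the paper's.
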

\begin{proof}
(i) Clearly $\dif$ is a derivation since for all $m,n\in \N$ we have
$\dif (q^{m+n})= (m+n)q^{m+n}=\dif (q^m)q^n+q^m\dif (q^n)$.

(ii) Put $\bfy=(y_1,\dots,y_d)$ and $x+\bfy=(x+y_1,\dots,x+y_d)$. We have
 \begin{equation} \label{eq:deriveq}
 \begin{split}
 T_0(x) \cdot T_\bfga (\bfy)
& =\sum_{m,n_1,\dots,n_d>0} e^{mx + n_1y_1 + \dots + n_d y_d} \qe{m} \qe{n_1} \dots \qe{n_1+\dots+n_d} \\
& =T_{\bfga,0} (x+\bfy,x)
 + \sum_{j=1}^d T_{\bfga(j)} (x+y_1,\dots,x+y_j,y_j,\dots,y_d)   \\
&+ R_{0;\bfga}^{(d)}(x;\bfy)
 - \sum_{j=1}^d  T_\bfga(x+y_1,\dots,x+y_j,y_{j+1},\dots,y_d).
\end{split}
\end{equation}
We obtain the above expansion by breaking up the sum in the first line
in the following way: second line is from
the sums with $n_1 + \dots + n_d < m$ and $n_1 + \dots + n_{j-1} < m < n_1 + \dots + n_j$
for $j=1,\dots,d+1$ . Setting $m = n_1 + \dots + n_{j-1} + m'$
and $n_j = m' + n_j'$ for these terms it is easy to see that one gets the sum over
$m',n_1,\dots,n_j',\dots,n_d$ which then gives $T_{\bfga(j)}(x+y_1,\dots,x+y_j,y_j,\dots,y_d)$
when $j\le d$ and $T_{\bfga,0} (x+\bfy,x)$ when $j=d+1$.
The third line of \eqref{eq:deriveq} arises from the sum over $m = n_1 + \dots + n_j$.
In this case one uses the definition
\eqref{equ:Rdefn} together with the identity
\begin{equation*}
\left( \qe{n} \right)^2 = \frac{q^n}{(1-q^n)^2} - \frac{q^n}{1-q^n}.
\end{equation*}

Finally, applying $D_x$ on both sides of \eqref{eq:deriveq} and
comparing the coefficient of $y_1^{s_1-1}\dots y_d^{s_d-1}$
one can derive the expression of $\dif$ immediately from \lemref{lem:AllDepth}.
This finishes the proof of the theorem.
\end{proof}

\begin{cor} \label{cor:depth2}
For all $s,t\in\N$ and $\ga,\gb\in\Z/N\Z$ we have
\begin{align*}
\dif&[s,t;\ga,\gb] = [2;0]\cdot [s,t;\ga,\gb]+ 2 s [s+1,t;\ga,\gb] + t [s,t+1;\ga,\gb]- [s,t,2;\ga,\gb,0] \\
&- s [s+1,t,1;\ga,\gb,0] - t [s, t+1,1;\ga,\gb,0] -\sum_{a+b=s+2} (a-1) [a,b,t;\ga,\ga,\gb] \\
&- \sum_{a+b=t+2} (a-1) [s,a,b;\ga,\gb,\gb]-\sum_{a+b=t+1} s [s+1,a,b;\ga,\gb,\gb] .
\end{align*}
\end{cor}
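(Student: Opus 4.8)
The plan is to obtain \corref{cor:depth2} simply as the specialization of \thmref{thm:derivative}(ii) to depth $d=2$: no new analytic input is required, only a careful expansion of the sums $\sum_{j=1}^{d}$ over $j\in\{1,2\}$, together with bookkeeping of the color vectors $\bfga(j)$ and of the insertion positions of the pairs $(a,b)$. First I would set $\bfs=(s,t)$ and $\bfga=(\ga,\gb)$ in the formula of \thmref{thm:derivative}(ii); the leading term $[2;0]\cdot[s,t;\ga,\gb]$ is then already in the desired shape.

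Next I would unwind the three single sums over $j$. The term $\sum_{j=1}^{2}(d-j+1)s_j[\bfs+\bfe_j;\bfga]$ gives $2s[s+1,t;\ga,\gb]+t[s,t+1;\ga,\gb]$, since $d-j+1$ equals $2$ for $j=1$ and $1$ for $j=2$; the term $-\sum_{j=1}^{2}s_j[\bfs+\bfe_j,1;\bfga,0]$ gives $-s[s+1,t,1;\ga,\gb,0]-t[s,t+1,1;\ga,\gb,0]$; and $-[\bfs,2;\bfga,0]$ is just $-[s,t,2;\ga,\gb,0]$.

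It then remains to expand the last two (double and triple) sums. In $-\sum_{j=1}^{2}\sum_{a+b=s_j+2}(a-1)[\,\cdots;\bfga(j)]$, the index $j=1$ produces $\bfga(1)=(\ga,\ga,\gb)$ with $(a,b)$ filling the first two slots, hence $-\sum_{a+b=s+2}(a-1)[a,b,t;\ga,\ga,\gb]$, while $j=2$ produces $\bfga(2)=(\ga,\gb,\gb)$ with $(a,b)$ in the last two slots, hence $-\sum_{a+b=t+2}(a-1)[s,a,b;\ga,\gb,\gb]$. In the final sum $-\sum_{j=1}^{2}\sum_{\ell=1}^{j-1}\sum_{a+b=s_j+1}s_\ell[\,\cdots;\bfga(j)]$ only the pair $(j,\ell)=(2,1)$ survives; there $s_1+\gd_{1,1}=s+1$ and $\bfga(2)=(\ga,\gb,\gb)$, so the contribution is $-\sum_{a+b=t+1}s[s+1,a,b;\ga,\gb,\gb]$. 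Collecting all pieces with the correct signs reproduces the displayed identity. The only step that needs genuine care is this combinatorial bookkeeping of which $\gd$-shifts and which repeated color $\bfga(j)$ occur in each summand; there is no real obstacle, since \thmref{thm:derivative} has already been proved in full generality. As an independent consistency check one could instead derive the depth-two identity directly by applying $D_x$ to the product $T_0(x)\cdot T_{\ga,\gb}(y_1,y_2)$ as in the proof of \thmref{thm:derivative}, using \lemref{lem:shuffle2} and \lemref{lem:AllDepth}.
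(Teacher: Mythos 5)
Your proposal is correct and coincides with what the paper intends: Corollary~\ref{cor:depth2} is stated without a separate proof precisely because it is the direct specialization of Theorem~\ref{thm:derivative}(ii) to $d=2$, and your expansion of each sum (including the empty $\ell$-sum for $j=1$ and the color vectors $\bfga(1)=(\ga,\ga,\gb)$, $\bfga(2)=(\ga,\gb,\gb)$) is accurate. Nothing further is needed.
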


\begin{eg}\label{eg:D1111}
Suppose $N\ge 2$.
Using  \corref{cor:depth2} and the relation obtained from Example~\ref{eg:stuffle[a;ga][b,c;gb,gam]}
\begin{multline*}
\dif[1,1;1,1]=[2,1,1;0,1,1]-[2,1,1;1,1,0]-2[2,1,1;1,1,1]+[1,2,1;1,0,1]\\
-[1,2,1;1,1,0]-[1,2,1;1,1,1]+2[2,1;1,1]+[1,2;1,1]  \\
+  \om^N_{1;1} \big([1,1;1,0]+ [1,1;0,1]-2 [1,1;1,1]\big)-\om^N_{0;1} \big([1,2;1,0]+[2,1;0,1]\big)
\end{multline*}
by \propref{prop:l2-expli} and \eqref{equ:gl=om},
we see that by \thmref{thm:ZD} the image of the above under the map $Z_4$ is
\begin{multline*}
 \zeta_N(2,1,1;0,1,1)-\zeta_N(2,1,1;1,1,0)-2\zeta_N(2,1,1;1,1,1)\\
=\zeta^*_N(1,2,1;1,1,0)+\zeta^*_N(1,2,1;1,1,1)-\zeta^*_N(1,2,1;1,0,1).
\end{multline*}
By stuffle relations we must have
\begin{multline}\label{equ:N=2wt4stf1}
\zeta_N(2,1,1;1,0,1)-\zeta_N(2,1,1;0,1,1)=\zeta_N(2,2;0,1)-\zeta_N(2,2;1,1)\\
-\zeta_N(3,1;1,0)-\zeta_N(3,1;1,1)
\end{multline}
and
\begin{equation}\label{equ:N=2wt4stf2}
\zeta_N(2,1;1,0)+\zeta_N(2,1;1,1)=\zeta_N(2,1;0,1).
\end{equation}
When $N=2$ \eqref{equ:N=2wt4stf2} is just \eqref{equ:N=2wt3Reuse} while
\eqref{equ:N=2wt4stf1} also follows from \cite[Proposition~3.5]{Zhao2007unpub}, \eqref{equ:N=2Wt=41} to \eqref{equ:N=2Wt=44} and the following identities:
{\allowdisplaybreaks
\begin{align}
\begin{split}
 8\zeta_2(\baro{2},1,\baro{1})=&\, \zeta(2,1,1)-\zeta(\bar2,1,1)+\zeta(2,\bar1,1)-\zeta(\bar2,\bar1,1)\\
-&\, \zeta(2,1,\bar1)+\zeta(\bar2,1,\bar1)-\zeta(2,\bar1,\bar1)+\zeta(\bar2,\bar1,\bar1),
\end{split}
\label{equ:N=2Wt=45}\\
\begin{split}
 8\zeta_2(2,\baro{1},\baro{1})=&\, \zeta(2,1,1)+\zeta(\bar2,1,1)-\zeta(2,\bar1,1)-\zeta(\bar2,\bar1,1)\\
-&\, \zeta(2,1,\bar1)-\zeta(\bar2,1,\bar1)+\zeta(2,\bar1,\bar1)+\zeta(\bar2,\bar1,\bar1).
\end{split} \label{equ:N=2Wt=46}
\end{align}}
\end{eg}

In the next example we show how one can apply the Leibniz rule of $\dif$
to produce some $\Q$-linear relations in $\MD_N$.
\begin{eg}\label{eg:Lib}
Suppose $N\ge 2$.
Applying $\dif$ to \eqref{equ:[1;1][1;1]} one gets for all $N$
\begin{equation*}
\dif([1;1]\cdot [1;1])=2\dif[1,1;1,1]+\dif[2;1]-\dif[1;1]=2[1;1]\cdot\dif[1;1].
\end{equation*}
Using Examples~\ref{eg:stuffle[a;ga][b,c;gb,gam]}, \ref{eg:Dif[1;1]}, \ref{eg:D1111}  and
\propref{prop:l2-expli} one can finally gets a $\Q$-linear relation in $\MD_N$:
\begin{align*}
&\, 2[2,1,1;0,1,1]-2[2,1,1;1,0,1]=[2,2;1,1]-[2,2;0,1] +[2,1;0,1]\\
+&\,\om^N_{1;1}\big(2[1,1;1,1]-2[1,1;0,1]+[2;1]-[2;0]-[1;0]-[1;1]\big)-2(\om^N_{0;1})^2[2;0] \\
+&\, \om^N_{0;1}\big([2;0]+2[2,1;0,1]+2[2,1;1,1]-2[2,1;1,0]\big)+\big(\om^N_{2;1}
+4\om^N_{0;1}\om^N_{1;1}\big)\big([1;0]-[1;1]\big).
\end{align*}
Applying $Z_4$ we get
\begin{equation*}
2\zeta_N(2,1,1;0,1,1)-2\zeta_N(2,1,1;1,0,1)=\zeta_N(2,2;1,1)-\zeta_N(2,2;0,1).
\end{equation*}
When $N=2$ this can be proved also by \eqref{equ:N=2Wt=43}, \eqref{equ:N=2Wt=44}, \eqref{equ:N=2Wt=45},
\eqref{equ:N=2Wt=46}, and \cite[Proposition~3.5]{Zhao2007unpub}.
\end{eg}

\end{document}